%%%%%%%%%%%%%%%%%%%%%%%%%%%%%%%%%%
%                                                                                                    %
%								                                                             %  
%	   A.Varchenko	                      		                                             %
%								                                                             %
% 	                                                                                                  %
%			                                                                                       %
%								                                                             %
%     	AMSLaTeX     pages                                                                 %
%								                                                             %
% %%%%%%%%%%%%%%%%%%%%%%%%%%%%%%%%%

\documentclass[12pt]{amsart}
\usepackage{amssymb,amscd}
\usepackage{verbatim}

\usepackage{amsmath,amssymb,graphicx,mathrsfs}   % my new
\usepackage[colorlinks=true,allcolors = blue]{hyperref} % my new

\usepackage{tikz}
\usetikzlibrary{matrix}

\usepackage[all]{xy}

\textwidth 6.5truein
\textheight 8.67truein
\oddsidemargin 0truein
\evensidemargin 0truein
\topmargin 0truein

\let\frak\mathfrak

\def\>{\relax\ifmmode\mskip.666667\thinmuskip\relax\else\kern.111111em\fi}
\def\<{\relax\ifmmode\mskip-.333333\thinmuskip\relax\else\kern-.0555556em\fi}
\def\vsk#1>{\vskip#1\baselineskip}
\def\vv#1>{\vadjust{\vsk#1>}\ignorespaces}
\def\vvn#1>{\vadjust{\nobreak\vsk#1>\nobreak}\ignorespaces}

  \let\ssize\scriptstyle
\let\sssize\scriptscriptstyle

\let\Medskip\medskip
\def\medskip{\par\Medskip}
\let\Bigskip\bigskip
\def\bigskip{\par\Bigskip}

\let\Maketitle\maketitle
\def\maketitle{\Maketitle\thispagestyle{empty}\let\maketitle\empty}

\newtheorem{thm}{Theorem}[section]
\newtheorem{cor}[thm]{Corollary}
\newtheorem{lem}[thm]{Lemma}
\newtheorem{prop}[thm]{Proposition}

\theoremstyle{definition}                                  % My June 14 2017
\numberwithin{equation}{section}

\theoremstyle{definition}
\newtheorem*{rem}{Remark}

\let\mc\mathcal
\let\nc\newcommand

\let\al\alpha

\let\dl\delta

\let\eps\varepsilon

\let\ka\kappa
\let\la\lambda

\let\phi\varphi
\let\si\sigma

\let\om\omega
\let\Om\Omega

\let\der\partial

\let\ox\otimes

\let\geq\geqslant

\let\leq\leqslant

\let\on\operatorname
\let\bi\bibitem
\let\bs\boldsymbol

\def\C{{\mathbb C}}
\def\Z{{\mathbb Z}}

\def\F{{\mathbb F}}   % new Dec 2019

\def\+#1{^{\{#1\}}}

\def\id{{\on{id}}}

\def\beq{\begin{equation}}
\def\eeq{\end{equation}}
\def\be{\begin{equation*}}
\def\ee{\end{equation*}}

\nc{\bea}{\begin{eqnarray*}}
\nc{\eea}{\end{eqnarray*}}
\nc{\bean}{\begin{eqnarray}}
\nc{\eean}{\end{eqnarray}}
%\nc{\Ref}[1]{{\rm(\ref{#1})}}

\def\g{{\mathfrak g}}

\let\ga\gamma

\nc{\Il}{{\mc I_{\bs\la}}}
\nc{\bla}{{\bs\la}}
\nc{\Fla}{\F_\bla}
\nc{\tfl}{{T^*\Fla}}
\nc{\GL}{{GL_n(\C)}}
\nc{\GLC}{{GL_n(\C)\times\C^*}}

\let\sd s %% \def\sd{\dot s}

\def\ddk_#1{\kk_{#1}\<\>\frac\der{\der\<\>\kk_{#1}}}

\def\bul{\mathbin{\raise.2ex\hbox{$\sssize\bullet$}}}
\def\intt{\mathchoice
{\mathop{\raise.2ex\rlap{$\,\,\ssize\backslash$}{\intop}}\nolimits}
{\mathop{\raise.3ex\rlap{$\,\sssize\backslash$}{\intop}}\nolimits}
{\mathop{\raise.1ex\rlap{$\sssize\>\backslash$}{\intop}}\nolimits}
{\mathop{\rlap{$\sssize\<\>\backslash$}{\intop}}\nolimits}}

\let\kk q %% Q
 %% p
\let\cc c

\let\Ko K

\def\GZ/{Gelfand-Zetlin}
\def\KZ/{{\slshape KZ\/}}
\def\qKZ/{{\slshape qKZ\/}}
\def\XXX/{{\slshape XXX\/}}

\nc{\A}{{\mc C}}

\def\sll{{\frak{sl}}}

\def\Q{{\mathbb Q}}

\nc{\hsl}{\widehat{{\frak{sl}_2}}}

\nc{\BC}{{ \mathbb C}}
\nc{\lra}{\longrightarrow}
\nc{\CO}{{\mathcal{O}}}
\nc{\BZ}{{ \mathbb Z}}
\nc{\hfn}{\hat{\frak{n}}}
\nc\Zs{{\Z/p^s\Z}}
\nc\Zo{{\Zs[z]^0}}
\nc\gr{{\on{gr}}}

\nc\fD{{\frak D}}

\begin{document}

\hrule width0pt
\vsk->

\title[Notes on solutions of KZ equations modulo $p^s$]
{Notes on solutions of KZ equations
\\
 modulo $p^s$
 and $p$-adic limit $s\to\infty$}

\author[Alexander Varchenko]
{ Alexander Varchenko$^{\star}$}

\maketitle

\begin{center}
{with an appendix by Steven Sperber$^{\circ}$ and Alexander Varchenko}
\end{center}

\bigskip
\medskip

\begin{center}
{\it $^\star\<$Department of Mathematics, University
of North Carolina at Chapel Hill\\ Chapel Hill, NC 27599-3250, USA\/}

\vsk.5>
{\it $^\star\<$Faculty of Mathematics and Mechanics, Lomonosov Moscow State
University\\ Leninskiye Gory 1, 119991 Moscow GSP-1, Russia\/}

\vsk.5>
 {\it $^\star\<$Moscow Center of Fundamental and Applied Mathematics
\\ Leninskiye Gory 1, 119991 Moscow GSP-1, Russia\/}

\vsk.5>
 {\it $^\circ\<$School of Mathematics, University of Minnesota
 \\    	127 Vincent Hall,  Minneapolis, MN 55455, USA \/}

\end{center}

\vsk>
{\leftskip3pc \rightskip\leftskip \parindent0pt \Small
{\it Key words\/}:  KZ equations,  reduction modulo $p^s$,
$p^s$-hypergeometric solutions, $p$-adic limit,
Frobenius transformations, unit roots

\vsk.6>
{\it 2010 Mathematics Subject Classification\/}: 13A35 (11G25, 14G10, 33C60, 32G20) 
\par}

{\let\thefootnote\relax
\footnotetext{\vsk-.8>\noindent
$^\star\<$
{\it E\>-mail}:
anv@email.unc.edu\,, supported in part by NSF grant DMS-1954266
\\
$^\circ\<$ {\it E\>-mail}: sperber@umn.edu}}

\newpage

\begin{abstract}
We consider the  differential KZ equations over $\mathbb C$  in the case, when the hypergeometric
solutions are one-dimensional hyperelliptic integrals of genus $g$. 
In this case the space of solutions of the differential KZ equations is a $2g$-dimensional complex vector space.

We also consider the same differential
equations modulo $p^s$, where $p$ is an odd prime number and $s$ is
 a positive integer, and over the field  $\mathbb Q_p$ of $p$-adic
numbers.

We describe a construction of polynomial solutions of the
differential  KZ equations modulo $p^s$.
These polynomial solutions have integer coefficients and
are $p^s$-analogs of the hyperelliptic integrals.
We call them the $p^s$-hypergeometric solutions.  
We consider the space $\mathcal M_{p^s}$ of all $p^s$-hypergeometric solutions,  which is a module over the ring
of polynomial quasi-constants modulo $p^s$. We study basic properties of $\mathcal M_{p^s}$,
in particular its natural filtration, and the dependence of $\mathcal M_{p^s}$ on $s$.

We show that the $p$-adic limit of $\mathcal M_{p^s}$ as $s\to\infty$ gives us a
 $g$-dimensional vector space of solutions of the differential KZ equations over
the field $\mathbb Q_p$. The solutions over $\mathbb Q_p$  are power series at a
certain asymptotic zone of the KZ equations.

In the appendix written jointly with Steven Sperber we consider 
all asymptotic zones of the KZ equations in the special case $g=1$ of elliptic integrals.
It turns out that  in this case the $p$-adic limit of  
$\mathcal M_{p^s}$ as $s\to \infty$ gives us a one-dimensional space of solutions over $\mathbb Q_p$ at every
asymptotic zone. We apply Dwork's theory of the classical hypergeometric function over $\mathbb Q_p$ and show that
our  germs of solutions over $\mathbb Q_p$ defined at different asymptotic zones analytically continue into
a single global invariant line subbundle of the associated KZ connection. Notice that the corresponding KZ connection
over $\mathbb C$  does not have proper nontrivial invariant subbundles, and therefore our invariant line subbundle is
a new feature of the KZ equations over $\mathbb Q_p$.

Also in the appendix we follow Dwork and describe the Frobenius transformations of solutions of the KZ equations for $g=1$.
Using these Frobenius transformations we recover the unit roots of the zeta functions of the elliptic curves defined by
the affine equations 
$y^2= \beta \,x(x-1)(x-\alpha)$ over the finite field 
$\mathbb F_p$. Here $\alpha,\beta\in\mathbb F_p^\times, \alpha \ne 1$.
 Notice that the same elliptic curves considered over $\C$ are used
to construct the complex holomorphic solutions of the KZ equations for $g=1$.

\end{abstract}

{\small\tableofcontents\par}

\setcounter{footnote}{0}
\renewcommand{\thefootnote}{\arabic{footnote}}

\section{Introduction}

\noindent{\bf 1.1.}
The KZ equations were introduced  in  \cite{KZ} as  the differential equations satisfied by 
conformal blocks on sphere in the Wess-Zumino-Witten model of conformal field theory.
The solutions of the KZ equations in the form of multidimensional hypergeometric integrals
were constructed more than  30 years ago,
see \cite{SV1}. 
The KZ equations and the hypergeometric solutions 
are related to many subjects in algebra, representation theory,
theory of integrable systems, enumerative geometry.

\vsk.2>

 The polynomial
solutions of the KZ equations over the finite field $\F_p$  
of  a prime number $p$ of elements were constructed relatively  recently 
 in \cite{SV2}, see also \cite{V4}-\cite{V8}, \cite{RV1, RV2}.
These  solutions  were called the { $\F_p$-hypergeometric solutions}.
  The general problem is to understand relations between the hypergeometric solutions 
of the KZ equations over $\C$ and the $\F_p$-hypergeometric solutions and observe how the
remarkable properties of hypergeometric solutions are reflected in the properties
of the $\F_p$-hypergeometric solutions. 
For example, the $\F_p$-hypergeometric solutions inherit some determinant properties of
the hypergeometric solutions and some Selberg integral properties, see \cite{V8, RV1, RV2}.

\vsk.2>
This program is in the first stages, where we consider  essential  examples and 
study the corresponding $\F_p$-hypergeometric solutions by direct methods.

\vsk.2>

In this paper we consider the  differential KZ equations over $\C$  in the case, when the hypergeometric
solutions are one-dimensional hyperelliptic integrals of genus $g$. 
In this case the space of solutions of the differential KZ equations is a $2g$-dimensional complex vector space.
We also consider the same differential
equations modulo $p^s$, where $p$ is an odd prime number and $s$ is
 a positive integer, and over the field  $\Q_p$ of $p$-adic
numbers.

\vsk.2>
We give a construction of polynomial solutions of the 
differential KZ equations modulo 
$p^s$ for positive integers $s$.   We call such solutions the {\it $p^s$-hypergeometric solutions.}
This construction is a straightforward modification of the construction in
\cite{SV2} of polynomial solutions  modulo $p$. 

\vsk.2>
In this paper we consider the space $\mc M_{p^s}$ of all $p^s$-hypergeometric solutions,  which is a module over the ring
of polynomial quasi-constants modulo $p^s$. We study basic properties 
 of $\mc M_{p^s}$, in particular its natural filtration, and dependence of $\mc M_{p^s}$ on $s$.
 
 \vsk.2>
 We show that the $p$-adic limit of $\mc M_{p^s}$ as $s\to\infty$ gives us a
 $g$-dimensional vector space of solutions of the differential KZ equations over
the field $\Q_p$. The solutions over $\Q_p$  are power series at a
certain asymptotic zone of the KZ equations. This is the main result of the paper, see
Lemma \ref{lem padind} and Theorem \ref{thm last}.

\vsk.4>
\noindent{\bf 1.2.}
In the appendix written jointly with Steven Sperber we consider 
all six asymptotic zones of the KZ equations in the special case $g=1$ of elliptic integrals.
It turns out that  in this case the $p$-adic limit of  
$\mc M_{p^s}$ as $s\to \infty$ gives us a one-dimensional space of solutions over $\Q_p$ at every
asymptotic zone. We apply Dwork's theory
of the classical hypergeometric function over $\Q_p$ and show that
our  germs of solutions over $\Q_p$ defined at different asymptotic zones analytically continue into
a single global invariant line subbundle of  the associated KZ connection. Notice that the corresponding KZ connection
over $\C$  does not have proper nontrivial invariant subbundles, and therefore our invariant line subbundle is 
a new feature of the KZ equations over $\Q_p$. 

\vsk.2>
Following Dwork we show that our line subbundle is spanned
at any point of the base by the germs of all solutions of the KZ equations bounded in their discs of convergence.
This statement gives a definition of the line subbundle independent of asymptotic zones and analytic continuation.

\vsk.2>
Also in the appendix we follow Dwork and describe the Frobenius transformations of solutions of the KZ equations for $g=1$.
Using these Frobenius transformations we recover the unit roots of the zeta functions of the elliptic curves defined by
the affine equations 
$y^2= \beta \,x(x-1)(x-\al)$ over the finite field $\F_p$. Here $\al,\beta\in\F_p^\times, \al \ne 1$.
 Notice that the same elliptic curves considered over $\C$ are used
to construct the complex holomorphic solutions of the KZ equations for $g=1$.

In the end of Section \ref{sec A10} we argue that the KZ equations for $g=1$ contain
more arithmetic information than the associated hypergeometric differential equation \eqref{HE}  for the hypergeometric function 
$I(z)$ in \eqref{1}, studied in \cite{Dw}.

\vsk.4>
\noindent{\bf 1.3.}
Our   $p$-adic limit of $\mc M_{p^s}$ as $s\to\infty$ 
is similar to the $p$-adic limit in the following classical example, see \cite{Ig, Ma,Cl, BV1}.
Consider the elliptic integral
\bean
\label{1}
I(z) = \frac 1\pi\,\int_1^\infty \frac{dx}{\sqrt{x(x-1)(x-z)}}=\sum_{k = 0}^\infty\binom{-1/2}{k}^2z^k\,.
\eean
It satisfies the hypergeometric differential equation 
\bean
\label{HE}
z(1-z) I'' +(1-2z)I'-(1/4)I=0.
\eean
The coefficients of the power series $I(z)$ are $p$-adic integers and the power series 
$I(z)$ converges $p$-adically for 
$|z|_p<1$, where $|z|_p$ is the $p$-adic norm of $z\in\Q_p$. 
One may show that for any positive integer $s$ 
the polynomial 
\bean
\label{2}
I_{(p^s-1)/2}(z) = \sum_{k = 0}^{(p^s-1)/2} \binom{(p^s-1)/2}{k}^2z^k\,
\eean
 is a solution of  the differential equation \eqref{HE} modulo $p^s$. 
Thus we get a sequence 
\linebreak
$(I_{(p^s-1)/2}(z))_{s=1}^\infty$ of polynomials with integer coefficients,
each of which is a solution of the differential equation \eqref{HE} modulo
$p^s$, and the $p$-adic limit of the sequence, as $s$ tends to $\infty$,
is the $p$-adic power series solution $I(z)$ of the differential equation \eqref{HE}.

\vsk.2>
The $p^s$-hypergeometric solutions of our differential KZ equations are analogs of the polynomials 
$I_{(p^s-1)/2}(z)$ with an  analogous $p$-adic limit.
 The difference is that the construction of the 
 $p^s$-hypergeometric solutions does not indicate the
 analogous  $p$-adic limiting  solutions $I(z)$, 
and the analogous limiting $p$-adic power series solutions $I(z)$
can be discovered only after rewriting
the $p^s$-hypergeometric solutions in a suitable asymptotic zone of the differential KZ equations.

\vsk.2>

In the simplest example of our differential KZ equations,
the $p$-adic solution  is the 3-vector
\bean
\label{3}
I(u_1, u_2) 
&=&
 u_1^{-3/2} \sum_{k=0}^\infty \binom{-1/2}{k+1}\binom{-3/2}{k}
\Big(\frac{k+1}{-1/2-k},1,\frac{-1/2}{-1/2-k}\Big) u_2^k\,,
\eean
while the sequence $(I_{(p^s-3)/2}(u_1,u_2))_{s=1}^\infty$ 
of the $p^s$-hypergeometric solutions modulo $p^s$ of the same equations
is given by the formula

\bean
\label{4}
I_{(p^s-3)/2}(u_1,u_2) 
&=&
 u_1^{(p^s-3)/2} 
\sum_{k=0}^{(p^s-3)/2} \binom{(p^s-1)/2}{k+1}\binom{(p^s-3)/2}{k}
\\
\notag
&\times&
\Big(\frac {k+1}{(p^s-1)/2-k},1,\frac {(p^s-1)/2}{(p^s-1)/2-k}\Big) u_2^k\,,
\eean
 see Section \ref {sec Exn3}.  
 
 \vsk.2>
 The sum $\sum_{k=0}^{(p^s-3)/2}$ in \eqref{4}
 is the truncation of the sum $\sum_{k=0}^{\infty}$ in \eqref{3}, similar to what happens in \eqref{1} and \eqref{2}.
 A new feature appears when we compare the prefactor $u_1^{-3/2}$ and the sequence of prefactors 
 $\big(u_1^{(p^s-3)/2}\big)_{s=1}^\infty$.   As $s\to\infty$ the sequence of prefactors
  $\big(u_1^{(p^s-3)/2}\big)_{s=1}^\infty$  tends $p$-adically to the prefactor $u^{-3/2}$ 
multiplied by a Teichmuller constant on a suitable domain in $\Z_p$, 
where $\Z_p$ is the ring of $p$-adic integers, 
see Section \ref{sec n=3 c} and Theorem \ref{thm 10.3}.

\vsk.4>
\noindent{\bf 1.4.}
The paper is organized as follows. In Section \ref{sec DE} we define our system of KZ equations. In Section \ref{sec2}
 we describe its complex solutions as hyperelliptic integrals. In Section \ref{sec4} we describe the
 $p^s$-hypergeometric solutions of our KZ equations modulo 
 $p^s$ and define the filtered  module $\mc M_{p^s}$ of all $p^s$-hypergeometric 
solutions.   In Section \ref{sec5} we prove the independence of the module $\mc M_{p^s}$ from some arithmetic 
data involved in its definition. 
In Section \ref{sec6} we discuss the properties of the operator $\mc M_{p^s}\to \mc M_{p^s}$ of multiplication by $p$.
In Section \ref{sec7} we calculate the coefficients of the Taylor expansion of the $p^s$-hypergeometric solutions.
In Section \ref{sec8} we relate the operator $\mc M_{p^s}\to \mc M_{p^s}$
 of multiplication by $p$  and the Cartier-Manin 
matrix associated with the hyperelliptic curve defined by the affine equation $y^2=(x-z_1)\cdots(x-z_n)$.
In Section \ref{sec9} we consider one of the asymptotic zones of our KZ equations.
Using the coordinates in that asymptotic zone we describe the $p$-adic limit of the $p^s$-hypergeometric solutions in Section
\ref{sec10}. In Appendix \ref{appendix} we apply Dwork's theory in \cite{Dw} to the case $g=1$.
In Section \ref{sec A12} we discuss open problems related to the case of an arbitrary $g$.

\smallskip
The author thanks Masha Vlasenko for numerous clarifying remarks on  basics of the
$p$-adic theory of geometric differential equations and comments on  drafts of this paper.
The author thanks Steven Sperber for collaboration on this project.
The author thanks Pavel Etingof and Vadim Schechtman for useful discussions.

\section{KZ equations} 
\label{sec DE} 

Let $\g$ be a simple Lie algebra with an invariant scalar product.
The {\it Casimir element}  is 
\bea
\Om = {\sum}_i \,h_i\ox h_i \ \ \in \ \g \ox \g,
\eea
where $(h_i)\subset\g$ is an orthonormal basis.
Let  $V=\otimes_{i=1}^n V_i$ be 
a tensor product of $\g$-modules, $\ka\in\C^\times$ a nonzero number.
The {\it differential KZ equations} is the system of differential equations on a $V$-valued function $I(z_1,\dots,z_n)$,
\bea
\frac{\der I}{\der z_i}\ =\ \frac 1\ka\,{\sum}_{j\ne i}\, \frac{\Om_{i,j}}{z_i-z_j} I, \qquad i=1,\dots,n,
\eea
where $\Om_{i,j}:V\to V$ is the Casimir operator acting in the $i$th and $j$th tensor factors,
see \cite{KZ, EFK}.

\vsk.2>

This system is a system of Fuchsian first order
 linear differential equations. 
  The equations are defined on the complement in $\C^n$ to the union of all diagonal hyperplanes.
 
\vsk.2>

The object of our discussion is the following particular case. 

\vsk.2>
Let  $p$ {\it be an odd prime number,
 $n=2g+1$  an odd positive integer, $p>n\geq 2$.}
We  study the system of equations
for a  column vector  $I(z)=(I_1(z)$, \dots, $I_{n}(z))$\,:
\bean
\label{KZ}
\phantom{aaa}
 \frac{\partial I}{\partial z_i} \ = \
   {\frac 12} \sum_{j \ne i}
   \frac{\Omega_{ij}}{z_i - z_j}  I ,
\quad i = 1, \dots , n,
\qquad
I_1(z)+\dots+I_{n}(z)=0,
\eean
where $z=(z_1,\dots,z_n)$,
the $n\times n$-matrices $\Om_{ij}$ have the form:
\bean
\label{Om_ij_reduced}
 \Omega_{ij} \ = \ \begin{pmatrix}
             & \vdots^i &  & \vdots^j &  \\
        {\scriptstyle i} \cdots & {-1} & \cdots &
            1   & \cdots \\
                   & \vdots &  & \vdots &   \\
        {\scriptstyle j} \cdots & 1 & \cdots & -1&
                 \cdots \\
                   & \vdots &  & \vdots &
                   \end{pmatrix} ,
\eean                    
and all other entries are zero.
 This  joint system of {\it differential and 
algebraic equations} will be called the {\it system of KZ  equations} in this paper. 

\vsk.2>
System  \eqref{KZ} is the system of the  differential KZ equations with parameter $\ka=2$ associated with the Lie algebra $\sll_2$ and the subspace of singular vectors of weight $2g-1$ of the tensor power 
$(\C^2)^{\ox {(2g+1)}}$ of two-dimensional irreducible $\sll_2$-modules, up to a gauge transformation, see 
this example in  \cite[Section 1.1]{V3}.

\vsk.2>
We consider system \eqref{KZ} over the field $\C$. We
 also consider the same system of equations modulo $p^s$
 and over the field  $\Q_p$ of $p$-adic  numbers.

\section{Complex solutions}
\label{sec2}

Consider the {\it master function}
\bean
\label{mast f}
\Phi(x,z) = \prod_{a=1}^{n}(x-z_a)^{-1/2}
\eean
and  the column ${n}$-vector  of hyperelliptic  integrals
\bean
\label{Iga}
I^{(\ga)} (z)=(I_1(z),\dots,I_n(z)),
\qquad
I_j=\int   \frac{\Phi(x,z)}{x-z_j}\,dx\,.
\eean
The integrals $I_j$, are over an element $\ga$ of the first homology group
 of the algebraic curve with affine equation
\bea
y^2 = (x-z_1)\dots (x-z_{n})\,.
\eea
Starting from such $\ga$,  chosen for given values
$\{z_1,\dots,z_{n}\}$, the vector $I^{(\ga)}(z)$ can 
be analytically continued as a multivalued holomorphic function of $z$ to the complement in $\C^n$ of the union of the
diagonal  hyperplanes $z_i=z_j$, $i\ne j$.

\begin{thm}
\label{thm1.1}

 The vector $I^{(\ga)}(z)$ is a solution of system  \eqref{KZ}.
\end{thm}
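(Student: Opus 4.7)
\medskip
\noindent\textbf{Proof plan.}
The plan is to verify the three sets of identities directly: the algebraic relation $\sum_j I_j=0$, the off-diagonal equations $\partial I_j/\partial z_i$ with $j\ne i$, and the diagonal equations $\partial I_i/\partial z_i$. The key inputs are (i) the logarithmic derivative $\partial_{z_a}\Phi=\tfrac12\,\Phi/(x-z_a)$, (ii) a partial fraction decomposition, and (iii) the fact that $\int_\ga d(\cdot)=0$ on the closed cycle $\ga$.

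First, for the algebraic constraint, note that
\be
\sum_{j=1}^n \frac1{x-z_j}\,\Phi \;=\; -2\,\frac{d\Phi}{dx},
\ee
since $\sum_j(x-z_j)^{-1}=(d/dx)\log\prod_j(x-z_j)$ and $\Phi=(\prod_j(x-z_j))^{-1/2}$. Integrating over $\ga$ gives $\sum_j I_j=-2\int_\ga d\Phi=0$. Next, for $j\ne i$, differentiating under the integral sign and using partial fractions,
\be
\frac{\partial I_j}{\partial z_i} \;=\; \frac12\int_\ga\frac{\Phi\,dx}{(x-z_i)(x-z_j)}
\;=\; \frac1{2(z_i-z_j)}\bigl(I_i-I_j\bigr),
\ee
which is exactly the $j$-th entry of $\tfrac12\,\Omega_{ij}(z_i-z_j)^{-1}I$ read off from \eqref{Om_ij_reduced} (no other $\Om_{ik}$ with $k\ne i,j$ contributes to the $j$-th row).

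For the diagonal case I would apply integration by parts to the exact form
\be
d\!\left(\frac{\Phi}{x-z_i}\right)
\;=\;\left(-\frac{\Phi}{(x-z_i)^2}-\frac12\sum_{a=1}^{n}\frac{\Phi}{(x-z_i)(x-z_a)}\right)dx,
\ee
which integrates to $0$ over $\ga$. Isolating the $a=i$ contribution on the right, which equals $-\tfrac12\Phi/(x-z_i)^2$, and solving yields
\be
\int_\ga\frac{\Phi\,dx}{(x-z_i)^2}\;=\;-\,\frac13\sum_{a\ne i}\int_\ga\frac{\Phi\,dx}{(x-z_i)(x-z_a)}.
\ee
Differentiating directly gives $\partial I_i/\partial z_i=\tfrac32\int_\ga\Phi(x-z_i)^{-2}dx$; combining with the last display and applying partial fractions once more yields
\be
\frac{\partial I_i}{\partial z_i}\;=\;\frac12\sum_{a\ne i}\frac{I_a-I_i}{z_i-z_a},
\ee
which is precisely the $i$-th entry of the right-hand side of \eqref{KZ}.

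The main technical obstacle I would flag is the legitimacy of differentiating under the integral sign and of discarding the exact form, both of which rely on $\ga$ being a closed cycle on the hyperelliptic curve $y^2=\prod_a(x-z_a)$ that can be chosen to depend continuously on $z$ and to avoid the singularities $x=z_a$; this can be arranged locally in $z$, and since both sides of \eqref{KZ} are holomorphic in $z$ away from the diagonals, the identity then extends by analytic continuation. Everything else is the bookkeeping above.
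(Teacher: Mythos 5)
Your proof is correct and is essentially the same as the paper's, just unpacked component by component. The paper bundles the verification into two rational-function identities, \eqref{i1} and the vector-valued \eqref{i2}, and then integrates over the cycle; your computations for the algebraic constraint, the off-diagonal entries, and the diagonal entry are exactly the componentwise content of \eqref{i1} and \eqref{i2}, with your use of $\int_\ga d(\Phi/(x-z_i))=0$ playing the role of integrating the exact form $\der\Psi^i/\der x\,dx$.
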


Theorem \ref{thm1.1} is a classical statement. 
Much more general algebraic and differential equations satisfied by analogous multidimensional hypergeometric integrals were considered in \cite{SV1}.  Theorem \ref{thm1.1} is discussed as an example in  \cite[Section 1.1]{V3}.

\begin{proof}  
The theorem follows from  Stokes' theorem and the two identities:
\bean
\label{i1}
-\frac 12\,
\Big(\frac {\Phi(x,z)}{x-z_1} + \dots + \frac {\Phi(x,z)}{x-z_n}\Big)\,  =\, \frac{\der\Phi}{\der x}(x,z)\,,
\eean
\bean
\label{i2}
\Big(\frac{\der }{\der z_i}-\frac12
\sum_{j\ne i} \frac {\Omega_{i,j}}{z_i-z_j} \Big)\Big(\frac {\Phi(x,z)}{x-z_1}, \dots, \frac {\Phi(x,z)}{x-z_n}\Big) 
= \frac{\der \Psi^i}{\der x} (x,z),
\eean
where  $\Psi^i(x,z)$ is the column $n$-vector   $(0,\dots,0,-\frac{\Phi(x,z)}{x-z_i},0,\dots,0)$ with 
the nonzero element at the $i$-th place. 
\end{proof}

\begin{thm} [{\cite[Formula (1.3)]{V1}}]
\label{thm dim}

All solutions of system \eqref{KZ} have this form. 
Namely, the complex vector space of solutions of the form \eqref{Iga} is $n-1$-dimensional.

\end{thm}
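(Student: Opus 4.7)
The approach combines an abstract dimension count for the flat KZ connection with an explicit identification of the $I^{(\gamma)}$ as a spanning set.

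First, I would verify that \eqref{KZ} is a flat Fuchsian connection of rank $n-1$. Inspection of \eqref{Om_ij_reduced} shows that every column of $\Omega_{ij}$ sums to zero, so $\mathbf{1}^{\top}\Omega_{ij} = 0$ and hence $\Omega_{ij}$ preserves the hyperplane $V := \{v \in \C^n : v_1 + \dots + v_n = 0\}$. On $V$ the operators $\Omega_{ij}$ are Casimirs of $\sll_2$ acting on a tensor power of the defining representation, so they satisfy the infinitesimal pure braid (Kohno) relations. Hence the KZ connection on the trivial $V$-bundle over $\C^n \setminus \bigcup_{i<j}\{z_i = z_j\}$ is flat, and standard linear ODE theory yields a local solution space of complex dimension $n-1$.

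Second, the hyperelliptic curve $C_z$ associated to $y^2 = \prod_{a=1}^n(x - z_a)$ has genus $g = (n-1)/2$, so $H_1(C_z;\Z) \otimes \C$ has dimension $n-1$. By Theorem \ref{thm1.1}, the assignment $\gamma \mapsto I^{(\gamma)}$ is a linear map between two spaces of equal dimension $n-1$, and it suffices to prove injectivity.

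Injectivity can be tested at a fixed generic $z^0$: one needs cycles $\gamma_1, \dots, \gamma_{n-1}$ forming a basis of $H_1(C_{z^0};\Z)$ for which the matrix $\bigl(I^{(\gamma_k)}_j(z^0)\bigr)_{1 \le k, j \le n-1}$ is nondegenerate, the $n$-th column being redundant by the constraint $\sum_j I_j = 0$. Equivalently, the period pairing of $H_1(C_{z^0})$ with the classes of the meromorphic differentials $\omega_j := dx/(y(x-z_j))$, $j = 1, \dots, n-1$, must be nondegenerate. Since the period pairing between $H_1$ and the algebraic de Rham cohomology $H^1_{dR}$ of the smooth compactification is perfect and both sides have dimension $2g = n-1$, the main technical obstacle is to verify that $[\omega_1], \dots, [\omega_{n-1}]$ are linearly independent in $H^1_{dR}$. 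A direct route is a local pole-order analysis at the branch points — each $\omega_j$ acquires a double pole at the point over $z_j$ and is holomorphic at the other branch points and at infinity — combined with a Riemann--Roch bound on meromorphic functions whose differentials could match a nontrivial linear combination of the $\omega_j$. Alternatively, the general nondegeneracy theorem for master-function hypergeometric integrals in \cite{SV1} applies directly here, since the weights $-1/2$ are generic in the relevant sense, completing the argument.
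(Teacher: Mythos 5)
The paper disposes of this theorem with a single citation to the determinant formula of \cite[Formula~(1.3)]{V1}; your argument is genuinely different, replacing the determinant input with a geometric nondegeneracy argument via the de~Rham period pairing of the hyperelliptic curve. The overall architecture is sound: the upper bound on $\dim$ of the solution space via flatness of the rank-$(n-1)$ Fuchsian connection, the identification $\dim H_1(C_z;\C)=2g=n-1$, and the reduction to independence of $[\omega_1],\dots,[\omega_{n-1}]$ in $H^1_{\mathrm{dR}}$ are all correct.

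However, the Riemann--Roch step as you state it does not close the argument. Letting $D=P_1+\dots+P_{n-1}$ (the branch points over $z_1,\dots,z_{n-1}$), one has $\deg D=2g$, $\ell(K-D)=0$, and hence $\ell(D)=g+1$. In particular there \emph{are} nonconstant functions $f$ with simple poles only along $D$; indeed every such $f$ has the form $f=c+S(x)\,y\big/\prod_{j<n}(x-z_j)$ with $\deg S\le g-1$, giving a $g$-dimensional space of exact differentials $df$ supported (in the pole sense) exactly where your $\omega_j$ are. So the Riemann--Roch count alone does not rule out a nontrivial relation $\sum_j c_j\omega_j=df$. What is missing is an extra comparison: writing $\sum_{j<n}c_j\omega_j=\frac{Q(x)}{y\prod_{j<n}(x-z_j)}\,dx$ with $\deg Q\le n-2=2g-1$, while a direct computation shows $df=\frac{T(x)}{y\prod_{j<n}(x-z_j)}\,dx$ with $T$ of degree exactly $\deg S+2g\ge 2g$ whenever $S\ne 0$ (the leading coefficient of $T$ is $s\,(\deg S+\tfrac12-g)$, which cannot vanish). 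Since $\deg T>\deg Q$ for $S\ne 0$, no nonzero exact form lies in $\mathrm{span}\{\omega_1,\dots,\omega_{n-1}\}$, and independence follows. Once this degree analysis (or the determinant formula, or a fully spelled-out appeal to the nondegeneracy results of \cite{SV1}) is supplied, your proof is complete; as written, ``Riemann--Roch bound'' by itself leaves the key inequality unproved.
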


This theorem follows from the determinant formula for multidimensional hypergeometric integrals  in    \cite{V1}, in particular,
from \cite[Formula (1.3)]{V1}.

\section{Solutions modulo $p^s$}
\label{sec4}

\subsection{Leading terms}
For a ring $R$ denote  $R[z]=R[z_1,\dots,z_n]$. For a positive integer $t$ denote $R[z^{p^t}]=R[z_1^{p^t}, \dots,z_n^{p^t}]$.

\vsk.2>

Consider the lexicographical ordering of monomials
$z_1^{d_1}\dots z_n^{d_n}$,
 so we have $z_1>\dots > z_n$ and  so on.
For a nonzero polynomial
$f(z)=\sum_{d_1,\dots,d_n} a_{d_1,\dots,d_n} z_1^{d_1}\dots z_n^{d_n}$\,
let $f_{\frak l}(z)$ be the nonzero summand
$a_{d_1,\dots,d_n} z_1^{d_1}\dots z_n^{d_n}$ with the largest monomial
$z_1^{d_1}\dots z_n^{d_n}$.
 We call $f_{\frak l}(z)$ 
 the {\it leading term} of $f(z)$, the coefficient
$a_{d_1,\dots,d_n}$ -- the {\it leading coefficient}, the 
monomial
$z_1^{d_1}\dots z_n^{d_n}$ -- the {\it leading monomial}.

\vsk.2>
Let $s$ be a positive integer.
An element  $a\in\Zs$ has a unique presentation $a=a_0+a_1p+\dots+a_{s-1}p^{s-1}$,
where
$a_i\in \{0,\dots,p-1\}$. An element $a$ is invertible  if and only if $a_0\ne 0$.

\vsk.2>
Denote  $\F_p=\Z/p\Z$.

\vsk.2>
Let $\pi_{s}$ denote the homomorphisms 
$\Z \to \Zs$, $\Z[z] \to (\Zs)[z]$,  $\Z[z]^n \to (\Zs)[z]^n$
and for $t<s$ let $\pi_{s,t}$ denote the homomorphisms
$\Zs \to \Z/p^t\Z$, $(\Zs)[z] \to (\Z/p^t\Z)[z]$,  $(\Zs)[z]^n \to (\Z/p^t\Z)[z]^n$.

\vsk.2>

\subsection{Quasi-constants}

We say that a polynomial $f(z)\in\Z[z]$ is a {\it quasi-constant modulo} $p^s$ if
$\frac{\der f}{\der z_i}\in p^s\Z[z]$ for $i=1,\dots,n$. 
The quasi-constants modulo $p^s$ form a subring
of $\Z[z]$ denoted by $\Z[z]_{p^s}$.  For example $(z_1+z_2)^{p^s}\in\Z[z]_{p^s}$.

\begin{lem}
\label{lem quasi-c}

As a $\Z$-module  the ring $\Z[z]_{p^s}$ is spanned by the monomials
$p^{s-t} z_1^{d_1}\dots z_n^{d_n}$, where $t$ is the maximal
integer such that $t\leq s$ and $p^{t}$ divides every $d_1, \dots, d_n$.  

\end{lem}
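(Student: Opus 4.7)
The plan is to prove the two inclusions directly, exploiting the fact that partial derivatives act monomial-by-monomial without collisions.

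For the containment from right to left, fix a spanning monomial $m = p^{s-t} z_1^{d_1}\cdots z_n^{d_n}$ with $t$ as in the statement. Then
\[
\frac{\der m}{\der z_i} \ = \ p^{s-t}\, d_i\, z_1^{d_1}\cdots z_i^{d_i-1}\cdots z_n^{d_n},
\]
and since $p^t \mid d_i$ by the definition of $t$, the coefficient $p^{s-t} d_i$ lies in $p^s\Z$. Hence $m \in \Z[z]_{p^s}$.

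For the reverse containment, write $f = \sum_d a_d z^d$ in multi-index notation. The crucial observation is that for each fixed $i$, distinct multi-indices $d \ne d'$ give rise to distinct monomials $z^{d-e_i} \ne z^{d'-e_i}$ in $\der f/\der z_i$, so no cancellation occurs between the contributions of different terms of $f$. Consequently $\der f/\der z_i \in p^s\Z[z]$ for all $i$ if and only if $p^s \mid d_i a_d$ for every $d$ and every $i$ with $d_i \geq 1$, and this is a condition on each monomial $a_d z^d$ of $f$ separately. Fixing $d$, the tightest constraint comes from the index $i$ minimizing the $p$-adic valuation of $d_i$ among those with $d_i \geq 1$; if $t$ denotes the maximal integer $\leq s$ with $p^t \mid d_i$ for all $i$, then the combined condition on $a_d$ is precisely $p^{s-t} \mid a_d$. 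Writing $a_d = (a_d/p^{s-t}) \cdot p^{s-t}$ exhibits $a_d z^d$ as an integer multiple of the proposed spanning monomial $p^{s-t} z^d$, and summing over $d$ concludes the proof.

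The argument has no substantive obstacle; it is pure bookkeeping. The subtleties worth flagging are the two degenerate cases: when all $d_i = 0$ (so $t = s$ and the factor $p^{s-t} = 1$ imposes no condition, consistent with constants being quasi-constants), and when some $d_i$ has $p$-adic valuation exceeding $s$ (where the truncation $t \leq s$ prevents over-dividing $a_d$ by powers of $p$). Both are handled automatically by the definition of $t$.
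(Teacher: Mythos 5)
Your proof is correct, and your two inclusions together establish the lemma cleanly. The paper's proof argues only the nontrivial inclusion ($\Z[z]_{p^s}$ is contained in the span) and does so by an induction on the lexicographically leading term: it observes that the leading term of $\der f/\der z_i$ (for $i$ with $d_i^0>0$) comes solely from the leading term of $f$, deduces $p^{s-t}\mid c_{d^0}$, subtracts, and repeats. You replace this induction with the explicit observation that the map $d\mapsto d-e_i$ is injective on multi-indices with $d_i\geq 1$, so $\der/\der z_i$ permutes monomials without collision and the condition $\der f/\der z_i\in p^s\Z[z]$ decouples into the independent congruences $p^s\mid d_i a_d$, one for each $d$. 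This is the same underlying fact the paper uses implicitly, but your formulation avoids the leading-term bookkeeping and treats all monomials symmetrically rather than peeling them off one at a time; it also makes the (easy) reverse inclusion explicit, which the paper omits. Your flagging of the degenerate cases (all $d_i=0$, and $v_p(d_i)\geq s$) is accurate and matches the role of the cutoff $t\leq s$ in the statement.
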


For example, $z_1^{lp^s}$ and $p^{s-1} z_1^{p}z_2^{p^2}$ 
are such monomials.

\begin{proof}
Let $f(z)=\sum_d c_dz_1^{d_1}\dots z_n^{d_n} \in  \Z[z]_{p^s}$. We show that each summand
$c_dz_1^{d_1}\dots z_n^{d_n}$ is a multiple of a monomial of Lemma \ref{lem quasi-c}. Indeed, let
$c_{d^0}z_1^{d_1^0}\dots z_n^{d_n^0}$ be the leading term of $f(z)$.
 Then all first partial derivatives of it must lie in $p^s\Z[z]$
Hence $c_{d^0}\in p^{s-t} \Z$, where $t$ is the maximal integer such that $t\leq s$ and $p^{t}$
 divides every $d_1^0, \dots, d_n^0$. Subtracting
the leading term from $f(z)$ and repeating the reasoning we prove the lemma.
\end{proof}

\begin{lem}
\label{lem qc2} Let $f(z)$ be a quasi-constant modulo $p^s$ and  $t\in\Z_{\geq 0}$. 
Then $p^tf(z)$ is a quasi-constant modulo $p^{r}$ for any $1\leq r\leq s+t$.
\qed
\end{lem}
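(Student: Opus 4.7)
The plan is to unwind the definition of quasi-constant modulo $p^r$ directly. Recall that $f(z)\in\Z[z]$ being a quasi-constant modulo $p^s$ means $\der f/\der z_i \in p^s\Z[z]$ for each $i=1,\dots,n$. So the statement to prove is that $\der(p^tf)/\der z_i \in p^r\Z[z]$ for each $i$ whenever $1\leq r\leq s+t$.

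First I would differentiate and pull the scalar $p^t$ outside:
\[
\frac{\der (p^t f)}{\der z_i} \ =\ p^t\,\frac{\der f}{\der z_i}.
\]
By the hypothesis on $f$, each $\der f/\der z_i$ lies in $p^s\Z[z]$, hence the right-hand side lies in $p^{s+t}\Z[z]$. For any $r$ with $1\leq r\leq s+t$ we have $p^{s+t}\Z[z]\subseteq p^r\Z[z]$, which by definition says precisely that $p^tf(z)$ is a quasi-constant modulo $p^r$.

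There is no real obstacle here; the lemma is essentially a bookkeeping observation about how divisibility by powers of $p$ interacts with differentiation. The only subtlety worth flagging is that the constant $p^t$ commutes with $\der/\der z_i$ since the derivation is $\Z$-linear, which is what lets us absorb it into the divisibility estimate.
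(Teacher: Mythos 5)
Your proof is correct, and since the paper states the lemma with a terminating $\qed$ and no written argument, treating it as immediate from the definition, your unwinding of the definition is exactly the intended justification.
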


The rings of quasi-constants form a decreasing filtration,
$\Z[z]_{p}\supset \Z[z]_{p^2} \supset \dots\,.$

\subsection{Solutions of system \eqref{KZ} modulo $p^s$}

We say that a column $n$-vector $I(z) \in\Z[z]^n$ of polynomials with integer coefficients
is a {\it solution of system \eqref{KZ} modulo} $p^s$, if  $\pi_{s} I(z)\in (\Zs)[z]^n$ satisfies system \eqref{KZ}.

\begin{lem}
\label{lem sol ps}
Let $I(z)$ be a solution of system \eqref{KZ} modulo $p^s$.
\begin{enumerate}
\item[(i)]
Let  $t\in \Z_{\geq 0}$.  Then
$p^tI(z)$ is a solution of system \eqref{KZ} modulo $p^{r}$ for any $1\leq r\leq s+t$.

\item[(ii)]
Let  
$f(z)$ be a quasi-constant modulo $p^s$. Then $f(z)I(z)$ 
is a solution of system \eqref{KZ} modulo $p^s$.

\item[(iii)] Let $1\leq t<s$ and $I(z) \in p^t\Z[z]^n$.
Let $f(z)$ be a quasi-constant modulo $p^{s-t}$. Then $f(z)I(z)$ 
is a solution of system \eqref{KZ} modulo $p^s$.

\end{enumerate}
\qed
\end{lem}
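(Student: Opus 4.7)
\medskip
\noindent\textbf{Proof plan.}
My plan is to observe that ``$I(z)$ solves \eqref{KZ} modulo $p^s$'' amounts to a collection of polynomial identities modulo $p^s$ that are \emph{$\Z[z]$-linear} in $I$. Indeed, after clearing the common denominator $\prod_{j\ne i}(z_i-z_j)$ in the $i$-th equation one obtains a $\Z[z]$-linear operator $E_i$ sending $I\in\Z[z]^n$ to the polynomial vector
\beq
E_i(I)\ =\ 2\prod_{j\ne i}(z_i-z_j)\,\frac{\der I}{\der z_i}\ -\
\sum_{j\ne i}\Om_{ij}\prod_{k\ne i,j}(z_i-z_k)\,I,
\eeq
and the hypothesis of the lemma becomes $E_i(I)\in p^s\Z[z]^n$ for every $i$, together with $I_1+\dots+I_n\in p^s\Z[z]$. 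All three parts then follow from the Leibniz rule applied to $E_i$ plus an accounting of $p$-adic valuations.

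For (i), linearity immediately gives $E_i(p^tI)=p^tE_i(I)\in p^{s+t}\Z[z]^n$, which lies inside $p^r\Z[z]^n$ for every $r\le s+t$, and the scalar constraint is handled the same way. For (ii), the Leibniz rule yields
\beq
E_i(fI)\ =\ 2\prod_{j\ne i}(z_i-z_j)\,\frac{\der f}{\der z_i}\,I\ +\ f\,E_i(I).
\eeq
The first summand lies in $p^s\Z[z]^n$ because $\der f/\der z_i\in p^s\Z[z]$ by the quasi-constant hypothesis on $f$, and the second lies in $p^s\Z[z]^n$ because $I$ itself solves \eqref{KZ} modulo $p^s$; the algebraic constraint is preserved via the obvious factor of $f$. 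For (iii), the same decomposition applies, but now $\der f/\der z_i\in p^{s-t}\Z[z]$ and $I\in p^t\Z[z]^n$, so the first summand acquires the needed factor $p^{s-t}\cdot p^t=p^s$, while the second summand is again handled by the hypothesis on $I$.

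There is no substantive obstacle. The one point worth pinning down cleanly at the outset is the passage from the rational form of \eqref{KZ} to the polynomial identity $E_i(I)\equiv 0\pmod{p^s}$, since the literal statement ``$\pi_sI$ satisfies \eqref{KZ}\,'' only makes sense in a localization of $(\Zs)[z]$ in which the $z_i-z_j$ are invertible; phrasing the hypothesis as $E_i(I)\in p^s\Z[z]^n$ keeps everything inside $\Z[z]$ and makes the three valuation counts above transparent.
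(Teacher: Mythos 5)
Your proof is correct, and since the paper itself provides no proof (the lemma ends with \qed), it is a genuine fill-in. The Leibniz decomposition $E_i(fI)=2\prod_{j\ne i}(z_i-z_j)\,\frac{\der f}{\der z_i}\,I+fE_i(I)$ and the attendant valuation counts are exactly what one needs; your preliminary remark about replacing the rational formulation of \eqref{KZ} by the polynomial condition $E_i(I)\in p^s\Z[z]^n$ is also well taken, and is legitimate because each $z_i-z_j$ is monic and hence a nonzerodivisor in $(\Zs)[z]$, so clearing denominators is an equivalence, not just an implication.
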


\subsection{$p^s$-Hypergeometric solutions}
\label{sec 3.4}

Let $M$ be the least positive integers such that 
\bean
\label{M_i}
M \equiv -\frac{1}{2}
\ \ 
(\on{mod} \,p^s)\,.
\eean
We have 
\bea
M= \frac{p^s-1}2 = \frac{p-1}2\Big(1+p+\dots+p^{s-1}\Big)\,.
\eea
Introduce the {\it master polynomial}
\bean
\label{mp red}
\Phi_{p^s}(x,z) 
&=&
 \prod_{i=1}^n(x-z_i)^{M} \ \in\  \Z[x,z].
\eean
Let
\bean
\label{P}
P_{p^s}(x,z) 
&=&
\Big(\frac {\Phi_{p^s}(x,z)}{x-z_1}, \dots,\frac {\Phi_{p^s}(x,z)}{x-z_n}\Big)\,=\, \sum_i P^{i}_{p^s}(z) \,x^i \,,
\eean
where $P_{p^s}(x,z)$ is a column $n$-vector of polynomials in $x,z_1,\dots,z_n$ and
$P^{i}_{p^s}(z)$ are $n$-vectors of polynomials in $z_1,\dots,z_n$ with coefficients in
$\Z$.
For a positive integer $l$, denote
\bea
I^{[lp^s-1]}_{p^s}(z)\,=\, P^{lp^s-1}_{p^s}(z)\,.
\eea

\begin{thm}
\label{thm Fp} 
For any positive integer $l$, the vector of polynomials $I^{[lp^s-1]}_{p^s}(z) \in \Z[z]^n$
is a solution of  system \eqref{KZ} modulo $p^s$.

\end{thm}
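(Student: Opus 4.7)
My plan is to adapt the proof of Theorem \ref{thm1.1} to the mod $p^s$ setting. In the complex case, the KZ operator applied to $(\Phi/(x-z_1),\dots,\Phi/(x-z_n))$ reduces via identity \eqref{i2} to a total $x$-derivative $\partial\Psi^i/\partial x$, which vanishes upon integration over a cycle $\gamma$ by Stokes' theorem; the algebraic constraint $I_1+\dots+I_n=0$ follows analogously from \eqref{i1}. In the mod $p^s$ setting, the analog of ``integration over $\gamma$'' is ``extraction of the coefficient of $x^{lp^s-1}$'', and the role of Stokes' theorem is played by the elementary observation that for any polynomial $g(x)=\sum_i g_i x^i\in\Z[x]$, the coefficient of $x^{lp^s-1}$ in $\partial g/\partial x$ equals $lp^s\,g_{lp^s}$, hence is divisible by $p^s$.

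The key input is that the master polynomial $\Phi_{p^s}(x,z)$ satisfies modulo $p^s$ exactly the formal differential relations obeyed by the complex master function $\Phi(x,z)=\prod_{a=1}^n(x-z_a)^{-1/2}$. Direct computation gives
\be
\frac{\partial \Phi_{p^s}}{\partial x} = M\sum_{i=1}^n \frac{\Phi_{p^s}}{x-z_i}, \qquad \frac{\partial \Phi_{p^s}}{\partial z_i} = -M\,\frac{\Phi_{p^s}}{x-z_i},
\ee
and since $M\equiv -1/2\pmod{p^s}$, these coincide modulo $p^s$ with $-\frac12\sum_i \Phi_{p^s}/(x-z_i)$ and $\frac12 \Phi_{p^s}/(x-z_i)$, respectively. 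Because identities \eqref{i1} and \eqref{i2} are purely formal consequences of the analogous relations for $\Phi$, together with the Leibniz rule and the partial-fraction identity $\frac{1}{(x-z_i)(x-z_k)} = \frac{1}{z_i-z_k}\big(\frac{1}{x-z_i}-\frac{1}{x-z_k}\big)$, the same identities hold modulo $p^s$ with $\Phi$ replaced by $\Phi_{p^s}$ and $\Psi^i$ replaced by $\Psi^i_{p^s}:=(0,\dots,0,-\Phi_{p^s}/(x-z_i),0,\dots,0)$.

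Given these polynomial identities, I would extract the coefficient of $x^{lp^s-1}$ from both sides. Since $x$-differentiation kills this coefficient modulo $p^s$, the right-hand sides vanish modulo $p^s$, yielding on the one hand $M\cdot\sum_{k}(I^{[lp^s-1]}_{p^s})_k(z)\equiv 0\pmod{p^s}$ (which gives the algebraic constraint after dividing by the unit $M\in\Zs$), and on the other hand the desired KZ differential equation for $I^{[lp^s-1]}_{p^s}(z)$ modulo $p^s$.

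The main step requiring care is the mod $p^s$ version of \eqref{i2}, since this identity involves the rational coefficients $1/(z_i-z_j)$: I would treat it as an identity in $\Z[x,z]\big[\{(z_i-z_j)^{-1}\}_{i\ne j}\big]$ reduced modulo $p^s$, or equivalently clear denominators by multiplying through by $\prod_{j\ne i}(z_i-z_j)$. A minor technical point is that the computation passes through $\Phi_{p^s}/(x-z_i)^2$, which is indeed a polynomial because the hypothesis $p>n$ with $n\geq 3$ odd forces $M=(p^s-1)/2\geq 2$. Beyond this bookkeeping, the proof is a direct translation of the complex argument; the essential new ingredient is only the replacement of Stokes' theorem by the coefficient-extraction observation.
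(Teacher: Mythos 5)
Your proposal is correct and follows essentially the same route as the paper, whose proof consists of stating the modified identities \eqref{i3}, \eqref{i4} and observing that the theorem follows. The additional details you supply are exactly the ones the paper leaves implicit: that extraction of the coefficient of $x^{lp^s-1}$ plays the role of Stokes' theorem, because $\partial_x$ multiplies that coefficient by $lp^s \equiv 0 \pmod{p^s}$; that $\Phi_{p^s}/(x-z_i)^2 \in \Z[x,z]$ since $M\geq 2$; and, usefully, that the paper's \eqref{i4} as written with $\tfrac12$ is only a congruence modulo $p^s$ (the exact identity has $-M$ in place of $\tfrac12$), so one should either clear the denominators $z_i-z_j$ and reduce mod $p^s$, or equivalently replace $\tfrac12$ by $-M$ as you indicate — a point the paper glosses over.
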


\begin{proof}
We have the following modifications of identities \eqref{i1}, \eqref{i2}\,:
\bean
\label{i3}
M\,
\Big(\frac {\Phi_{p^s}(x,z)}{x-z_1} + \dots + \frac {\Phi_{p^s}(x,z)}{x-z_n}\Big)\,=\,\frac{\der\Phi_{p^s}}{\der x}(x,z)\,,
\eean
\bean
\label{i4}
\Big(\frac{\der }{\der z_i}+ M
\sum_{j\ne i} \frac {\Omega_{i,j}}{z_i-z_j} \Big)\Big(\frac {\Phi_{p^s}(x,z)}{x-z_1}, \dots, \frac {\Phi_{p^s}(x,z)}{x-z_n}\Big) 
\,=\, \frac{\der \Psi_{p^s}^i}{\der x} (x,z),
\eean
where  $\Psi^i_{p^s}(x,z)$ is the column $n$-vector   $(0,\dots,0,-\frac{\Phi_{p^s}(x,z)}{x-z_i},0,\dots,0)$ with 
the nonzero element at the $i$-th place. 
The theorem follows  from identities \eqref{i3}, \eqref{i4}.
\end{proof}

\begin{rem}
In \cite{SV2} it was explained on how to construct polynomial solutions 
modulo $p$ of an arbitrary system differential KZ equations, associated 
with any Kac-Moody algebra and any tensor product of highest weight representations.
The same construction gives polynomial solutions modulo $p^s$.
The details  will be provided elsewhere.

\end{rem}

The range for the index $l$ is defined by  the inequalities 
 $0< lp^s-1\leq n\frac{p^s-1}2$. Hence
$l=1,\dots, g$.
The solutions  $I^{[lp^s-1]}_{p^s}(z)$, $l=1,\dots,g$, 
given by this construction, will be called the 
 {\it  $p^s$-hypergeometric solutions in $\Z[z]^n$}.
 For $t=1,\dots,s-1$ and $l =1,\dots,g$, the vector $p^{s-t} I^{[lp^{t}-1]}_{p^t}(z)$ 
is a solution of system \eqref{KZ} modulo $p^s$,
see Lemma \ref{lem sol ps}.  Such solutions also will 
be called {\it $p^s$-hypergeometric solutions in  $\Z[z]^n$}.

\subsection{Modules}
Consider the increasing filtration
\bean
\label{filt}
0 = \mathcal{M}_{p^s}^0
\subset \mathcal{M}_{p^s}^1\subset \dots\subset \mathcal{M}_{p^s}^{s-1} \subset \mathcal{M}_{p^s}^s = \mathcal{M}_{p^s}\,,
\eean
where
\bean
\label{Def Mst}
\mathcal{M}_{p^s}^t\,&=&\,\Big\{ \pi_{s}\Big(\sum_{r=1}^t\sum_{l=1}^g c_{r,l}(z) \,p^{s-r} I^{[lp^r-1]}_{p^r}(z) \Big)
\ |\ c_{r,l}(z)\in\Z[z]_{p^{r}}\Big\},
\eean
$ t=1,\dots,s$.\
We have $\mathcal{M}_{p^s}\subset (\Zs)[z]^n$. Every element of $\mathcal{M}_{p^s}$ is a polynomial
solution of system \eqref{KZ} with coefficients in $\Zs$, see Lemma \ref{lem sol ps}.
The set $\mathcal{M}_{p^s}$ is a module over the ring $\Z[z]_{p^s}$ of quasi-constants modulo $p^s$, where 
$f(z) \in \Z[z]_{p^s}$ acts by multiplication by $\pi_{s}f(z)$.
Each $\mathcal{M}_{p^s}^t$ is an $\Z[z]_{p^s}$-submodule of $\mathcal{M}_{p^s}$.

Each $\mathcal{M}_{p^s}^t$ is also a module over the larger ring $\Z[z]_{p^t}$ of quasi-constants modulo $p^t$,
where  $f(z) \in \Z[z]_{p^t}$ acts by multiplication by $\pi_{s}f(z)$.

\vsk.2>
The elements of $\mathcal{M}_{p^s}$ will be called the {\it  $p^s$-hypergeometric solutions} in $(\Zs)[z]^n$.

\section{Independence of modules from the choice of $M$}
\label{sec5}

\subsection{More general construction of solutions}
For $i=1,\dots,n$, let $M_i$ be a positive integer  such that 
\bean
\label{M_ii}
M_i \equiv -\frac{1}{2}
\ \ 
(\on{mod} \,p^s)\,.
\eean
Denote $\vec M=(M_1,\dots,M_n)$.
Consider the {\it master polynomial}
\bean
\label{mp red}
\Phi(x,z, \vec M) = \prod_{i=1}^n(x-z_i)^{M_i} \ \in\ \Z[x,z],
\eean
and the Taylor expansion 
\bea
P(x,z,\vec M) =\Big(\frac {\Phi(x,z, \vec M)}{x-z_1}, \dots,\frac {\Phi(x,z, \vec M)}{x-z_n}\Big)
\,=\, \sum_i P^{i}(z, \vec M) \,x^i,
\eea
where $P^{i}(z,\vec M)$ are $n$-vectors of polynomials in $z_1,\dots,z_n$ with coefficients in $\Z$.
For a positive integer $l$, denote
\bea
I^{[lp^s-1]}(z, \vec M)\,=\, P^{lp^s-1}(z, \vec M)\,.
\eea

\begin{thm}
\label{thm p^s} 
For any positive integers $l, t$, $t\leq s$, the vector of polynomials $I^{[lp^t-1]}(z,\vec M) \in \Z[z]^n$
is a solution of  system \eqref{KZ} modulo $p^t$.
\end{thm}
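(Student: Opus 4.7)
The proof will closely mirror the proof of Theorem \ref{thm Fp}, with the only substantive change being that the single exponent $M$ is replaced by a tuple $\vec M = (M_1, \dots, M_n)$. The crucial arithmetic input remains identical: each congruence $M_i \equiv -1/2 \pmod{p^s}$ means $2M_i + 1 \in p^s\Z$, and a fortiori $2M_i + 1 \in p^t\Z$ for every $1 \le t \le s$. The plan is to generalize identities \eqref{i3} and \eqref{i4} to the non-uniform setting, then extract the coefficient of $x^{lp^t - 1}$ as in the original proof.

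First, I would establish the analog of \eqref{i3}. From $\partial_x \Phi(x,z,\vec M) = \sum_i M_i \,\Phi(x,z,\vec M)/(x-z_i)$ we get
\begin{equation*}
2\,\frac{\partial \Phi(x,z,\vec M)}{\partial x} + \sum_{i=1}^n \frac{\Phi(x,z,\vec M)}{x-z_i} \,=\, \sum_{i=1}^n (2M_i+1)\,\frac{\Phi(x,z,\vec M)}{x-z_i},
\end{equation*}
and the right-hand side lies in $p^s\Z[x,z]^n$. Next, I would establish the analog of \eqref{i4}: setting $\Psi^i(x,z,\vec M) = (0,\dots,0,-\Phi(x,z,\vec M)/(x-z_i),0,\dots,0)$, a direct partial-fractions computation (performed component-by-component as in the proof of Theorem \ref{thm Fp}) shows that
\begin{equation*}
\Big(\frac{\der}{\der z_i} - \frac12 \sum_{j\ne i}\frac{\Om_{i,j}}{z_i - z_j}\Big) P(x,z,\vec M) - \frac{\der \Psi^i(x,z,\vec M)}{\der x}
\end{equation*}
has each component a sum of terms of the shape $(2M_j+1)\,\Phi(x,z,\vec M)/[2(x-z_i)(x-z_j)]$. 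After multiplying by $2$ each such term is a genuine polynomial in $\Z[x,z]$, and the factors $2M_j+1 \in p^s\Z$ make the whole discrepancy lie in $p^s\Z[x,z]^n$.

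Finally, I would extract the coefficient of $x^{lp^t-1}$ from the generalized \eqref{i4}. Writing $\Psi^i(x,z,\vec M) = \sum_m \psi^i_m(z)\, x^m$ with $\psi^i_m \in \Z[z]^n$, the coefficient of $x^{lp^t-1}$ in $\der \Psi^i/\der x$ equals $lp^t\,\psi^i_{lp^t}(z) \in p^t\Z[z]^n$. Combined with the previous paragraph, this forces
\begin{equation*}
\Big(\frac{\der}{\der z_i} - \frac12 \sum_{j\ne i}\frac{\Om_{i,j}}{z_i - z_j}\Big) I^{[lp^t-1]}(z,\vec M) \,\equiv\, 0 \pmod{p^t},
\end{equation*}
after the usual clearing of denominators. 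The same extraction applied to the generalized \eqref{i3} yields the algebraic constraint $\sum_k I^{[lp^t-1]}_k(z,\vec M) \equiv 0 \pmod{p^t}$, using that $2$ is invertible mod $p^t$ for odd $p$.

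The main technical obstacle is purely notational: carefully interpreting the $1/2$ and the $1/(z_i-z_j)$ appearing in system \eqref{KZ} so that congruences modulo $p^t$ make literal sense. Since $p$ is odd, $2$ is a unit in $\Z/p^t\Z$, and the apparent poles at $z_i = z_j$ cancel after partial-fraction expansion, so the effective identities take place in $\Z[x,z]^n$ throughout; the non-uniformity of the $M_i$ never enters the combinatorics beyond the single fact that $2M_i+1 \in p^s\Z$ for each $i$.
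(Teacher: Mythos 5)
Your proof is correct and follows exactly the route the paper intends: the paper's own proof is the single sentence "The theorem follows from the straightforward modification of identities \eqref{i3}, \eqref{i4}," and you have supplied precisely that modification (the discrepancy terms carry factors $2M_i+1\in p^s\Z$, and extracting the coefficient of $x^{lp^t-1}$ from the $\partial_x\Psi^i$ side produces the extra factor $lp^t$), together with the observation that $p^s\Z\subset p^t\Z$ for $t\leq s$ and that $2$ is a unit mod $p^t$.
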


\begin{proof}
The theorem follows from straightforward modifications of identities \eqref{i3}, \eqref{i4}.
\end{proof}

\subsection{More modules}

Consider the increasing filtration
\bean
\label{filT}
0=\mathcal{M}_{p^s}^0(\vec M)  \subset \mathcal{M}_{p^s}^1(\vec M) \subset \dots\subset
 \mathcal{M}_{p^s}^{s-1}(\vec M) \subset \mathcal{M}_{p^s}^s(\vec M) = \mathcal{M}_{p^s}(\vec M)\,,
\eean
where
\bean
\label{Def mst}
\phantom{aaaa}
\mathcal{M}_{p^s}^t(\vec M)\,
&=&\,\Big\{ \pi_{s}\Big(\sum_{r=1}^t\sum_{l\geq 1} c_{r,l}(z) \,p^{s-r} I^{[lp^r-1]}(z,\vec M) \Big)
\ |\ c_{r,l}(z)\in\Z[z]_{p^{r}}\Big\},
\eean
$t=1,\dots,s$. \ 
We have $\mathcal{M}_{p^s}(\vec M)\subset (\Zs)[z]^n$. 
Every element of $\mathcal{M}_{p^s}(\vec M)$ is a polynomial
solution of system \eqref{KZ} with coefficients in $\Zs$,  see Lemma \ref{lem sol ps}.
The set $\mathcal{M}_{p^s}(\vec M)$ is a module over the ring $\Z[z]_{p^s}$ 
of quasi-constants modulo $p^s$, where 
$f(z) \in \Z[z]_{p^s}$ acts by multiplication by $\pi_{s}f(z)$.
Each $\mathcal{M}_{p^s}^t(\vec M)$ 
is an $\Z[z]_{p^s}$-submodule of $\mathcal{M}_{p^s}(\vec M)$.

Each $\mathcal{M}_{p^s}^t(\vec M)$ 
is also a module over the larger ring $\Z[z]_{p^t}$ of quasi-constants modulo $p^t$,
where  $f(z) \in \Z[z]_{p^t}$ acts by multiplication by $\pi_{s}f(z)$.

\begin{thm}
\label{thm ind}

Filtration \eqref{filT} does not depend on the choice of $\vec M=(M_1,\dots,M_n)$, 
satisfying congruences  \eqref{M_ii}.  Moreover, filtration \eqref{filT}
coincides with filtration \eqref{filt}. 

\end{thm}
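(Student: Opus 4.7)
The plan is to prove both inclusions $\mathcal{M}^t_{p^s}(\vec M) \subseteq \mathcal{M}^t_{p^s}$ and $\mathcal{M}^t_{p^s} \subseteq \mathcal{M}^t_{p^s}(\vec M)$; together they imply both the coincidence of the two filtrations and the independence of $\mathcal{M}^t_{p^s}(\vec M)$ from $\vec M$. Set $\vec M_r := ((p^r{-}1)/2,\ldots,(p^r{-}1)/2)$ for $r\leq t$; since $M_i\equiv -1/2\pmod{p^s}$, we may write $\vec M-\vec M_r=p^r\vec k^{(r)}$ with $\vec k^{(r)}\in\Z^n_{\geq 0}$. Then $\Phi(x,z,\vec M)=\Phi(x,z,\vec M_r)\cdot H^{(r)}(x,z)$ with $H^{(r)}=\prod_i (x-z_i)^{p^r k_i^{(r)}}$, giving $P(\vec M)=H^{(r)}\cdot P(\vec M_r)$. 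Because $H^{(r)}$ is monic in $x$, it admits a formal Laurent inverse $H^{(r)-1}=\sum_{j\geq 0}G^{(r)}_j(z)\,x^{-D_r-j}$ with $D_r=\deg_x H^{(r)}$ and $G^{(r)}_j\in\Z[z]$, so $P(\vec M_r)=H^{(r)-1}\cdot P(\vec M)$ as a (termwise finite) coefficient identity.

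The technical heart is the parallel $p$-adic structure of the coefficients of $H^{(r)}$ and of $H^{(r)-1}$. From $\partial H^{(r)}/\partial z_i,\ \partial H^{(r)}/\partial x\in p^r\Z[x,z]$ I deduce $H^{(r)}_a\in\Z[z]_{p^r}$ and $aH^{(r)}_a\in p^r\Z[z]$, so $H^{(r)}_a=p^{r-v}\tilde H^{(r)}_a$ with $v:=\min(v_p(a),r)$ and $\tilde H^{(r)}_a\in\Z[z]_{p^v}$. Applying the same logic to $H^{(r)-1}=\prod_i(x-z_i)^{-p^r k_i^{(r)}}$, whose partial derivatives, viewed as formal Laurent series, are again visibly divisible by $p^r$, yields the analogous decomposition $G^{(r)}_j=p^{r-v}\tilde G^{(r)}_j$ with $v=\min(v_p(j),r)$ and $\tilde G^{(r)}_j\in\Z[z]_{p^v}$.

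For the first inclusion, expand $p^{s-r}I^{[lp^r-1]}(\vec M)=\sum_a H^{(r)}_a\cdot p^{s-r}P^{lp^r-1-a}(\vec M_r)$. For each $a$, with $b=lp^r-1-a$, the identity $v_p(b+1)=\min(r,v_p(a))=:v$ identifies $P^b(\vec M_r)$ as a $p^v$-hypergeometric solution; combining the decomposition of $H^{(r)}_a$ with Lemma~\ref{lem qc2} exhibits each summand as a quasi-constant-mod-$p^v$ multiple of a generator $p^{s-v}I^{[l''p^v-1]}(\vec M_r)$. Iterating the same argument between $\vec M_r$ and $\vec M_v$ when $v<r$ successively replaces the master vector by $\vec M_v$ at level $v$, eventually landing in $\sum_{v'\leq t}\text{(level-}v'\text{ generators with }\vec M_{v'})=\mathcal{M}^t_{p^s}$. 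For the reverse inclusion, the mirror expansion $p^{s-r}I^{[lp^r-1]}_{p^r}(z)=\sum_j G^{(r)}_j\cdot p^{s-r}P^{lp^r-1+D_r+j}(\vec M)$, analyzed by exactly the same $p$-adic parsing (now using $v_p((l+\sum k_i^{(r)})p^r+j)$ in place of $v_p(lp^r-a)$), places each summand into some $\mathcal{A}_v(\vec M)\subseteq\mathcal{M}^t_{p^s}(\vec M)$. The main technical difficulty is the careful bookkeeping of $p$-adic valuations together with the quasi-constant ring inclusions $\Z[z]_{p^s}\subseteq\Z[z]_{p^{s-1}}\subseteq\cdots$ throughout both expansions, particularly when checking that each rescaled summand has coefficient in the correct $\Z[z]_{p^{v}}$ for its level.
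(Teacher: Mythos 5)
Your proposal is correct in outline but takes a genuinely different route from the paper, so let me compare. The paper reduces $\vec M'\to\vec M$ one coordinate at a time by multiplying by a single factor $(x-z_j)^{p^s}$, controls the arithmetic via Lucas' theorem applied to $\binom{p^s}{a}$ (Lemma~\ref{lem divi}), and obtains the reverse inclusion by observing that the resulting relations form a triangular system in the generators; the final comparison with $\mc M_{p^s}$ (the $\vec M^{\min}$-version) is handled separately. You instead bundle the entire correction into one polynomial $H^{(r)}=\prod_i(x-z_i)^{p^r k_i^{(r)}}$, derive both the quasi-constant property of $H^{(r)}_a$ and its $p$-adic order $r-\min(v_p(a),r)$ directly from $\partial_{z_i}H^{(r)},\partial_x H^{(r)}\in p^r\Z[x,z]$ (no Lucas), and get the reverse inclusion via the formal Laurent inverse of $H^{(r)}$, which packages the paper's triangular inversion into a single clean identity. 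This is an attractive simplification and it attacks the coincidence with $\mc M_{p^s}$ directly rather than through the minimal vector.

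Two points need tightening. First, the ``identity'' $v_p(b+1)=\min(r,v_p(a))$ is only an inequality $v_p(b+1)\ge\min(r,v_p(a))$ (equality can fail when $v_p(a)\ge r$, e.g.\ $a=a'p^r$ with $a'\equiv l\pmod p$), though only the inequality is used. Second, and more seriously, the ``iterating'' step in the first inclusion is where essentially all of the work lives and it is asserted rather than carried out: after one expansion one obtains terms $\tilde H^{(r)}_a\,p^{s-v}P^{l'p^v-1}(z,\vec M_r)$ with $v<r$, which are \emph{not} of the form required in $\mc M_{p^s}^t$ because the master vector is still $\vec M_r$, not $\vec M_v$. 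One must then expand $P(\vec M_r)=H^{(r\to v)}P(\vec M_v)$ with $H^{(r\to v)}=\prod_i(x-z_i)^{(p^r-p^v)/2}$, repeat the $p$-adic parsing, verify that the running product of quasi-constant factors stays in $\Z[z]_{p^{v'}}$ for the current level $v'$ (using $\Z[z]_{p^{v'}}\supset\Z[z]_{p^{v}}$ for $v'\le v$), and check that each pass either terminates (the $a'=0$ and $v_p(a')\ge v$ cases land on genuine generators) or strictly decreases the level so the recursion is finite. Your sentence gestures at this but does not prove it; spelled out it is roughly the same amount of bookkeeping as the paper's two lemmas, so the ``more elegant'' framing buys conceptual clarity rather than a shorter proof.
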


For $s=1$ the statement is \cite[Theorem 3.1]{SliV}.

\begin{proof}

First we show that
$\mathcal{M}_{p^s}(\vec M)$ and 
  filtration \eqref{filT} do not depend on the choice of $\vec M$.
Let  $\vec M=(M_1,\dots,M_n)$,  $\vec M'=(M_1',\dots,M_n')$ be two vectors satisfying 
congruences  \eqref{M_ii}. We say that $\vec M'\geq \vec M$ if $M_i'\geq M_i$ for all $i$.
The vector $(\frac{p^s-1}2, \dots,\frac{p^s-1}2)$ is the minimal vector with respect to this
partial order. To show that  $\mathcal{M}_{p^s}(\vec M)$
and  filtration   \eqref{filT} do not depend on the choice of $\vec M$
it is enough to show that the filtrations are the same for a vector $\vec M$ and  for
a vector 
$\vec M' = \vec M +(0,\dots,0, \frac{p^s-1}2, 0,\dots,0)$, where the nonzero 
element stays at the $j$-th position for some $j$.
Then
\bea
P(x,z,\vec M') 
&=& P(x,z,\vec M) \cdot (x-z_j)^{p^s} =
P(x,z, \vec M) \sum_{a=0}^{p^s}(-1)^{p^s-a}\binom{p^s}{a}x^a z_j^{p^s-a}.
\eea
 Recall that
$P(x,z,\vec M)=\sum_i P^{i}(z, \vec M) \,x^i,$  $P(x,z,\vec M')=\sum_i P^{i}(z, \vec M') \,x^i,$
For any $r\leq s$ and $l$ we have
\bean
\label{su}
P^{lp^r-1}(z, \vec M') = \sum_{a=0}^{p^s}(-1)^{p^s-a}\binom{p^s}{a} z_j^{p^s-a}P^{lp^r-a-1}(z, \vec M).
\eean
We are interested in this formula, since
$I^{[lp^r-1]}(z, \vec M')=P^{lp^r-1}(z, \vec M')$
is a solution of system \eqref{KZ} modulo $p^r$.

\begin{lem}
\label{lem divi}
Let $b,c\in\Z_{>0}$ be such that  $bp^c \leq p^s$, $p\not\vert\, b$. Then $p^{s-c}$ is the maximal power of $p$ dividing
$\binom{p^s}{bp^c}$.

\end{lem}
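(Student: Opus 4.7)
The plan is to reduce the $p$-adic valuation computation to a one-line identity. First I would write
\[
\binom{p^s}{k} \;=\; \frac{p^s}{k}\,\binom{p^s-1}{k-1},
\]
valid for $1 \le k \le p^s$. Passing to $p$-adic valuations gives
\[
v_p\!\binom{p^s}{k} \;=\; s - v_p(k) + v_p\!\binom{p^s-1}{k-1}.
\]
Applied to $k = bp^c$ with $p\nmid b$, so that $v_p(k)=c$, this immediately yields $v_p\binom{p^s}{bp^c} = s-c$ provided I can show that $\binom{p^s-1}{k-1}$ is a $p$-adic unit.

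The key step is to establish this coprimality. I would invoke Lucas' theorem: the base-$p$ expansion of $p^s-1$ has every digit equal to $p-1$, so for any $0\le j\le p^s-1$ with base-$p$ digits $a_0,\dots,a_{s-1}$ one has
\[
\binom{p^s-1}{j} \;\equiv\; \prod_{i=0}^{s-1}\binom{p-1}{a_i} \pmod p,
\]
and each factor $\binom{p-1}{a_i}$ is nonzero mod $p$ since $0\le a_i\le p-1$. Hence $\binom{p^s-1}{k-1}\not\equiv 0\pmod p$, which closes the argument in the generic range $bp^c<p^s$.

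Finally, I would dispatch the edge case $bp^c=p^s$: the hypothesis $p\nmid b$ forces $b=1$ and $c=s$, in which case $\binom{p^s}{p^s}=1$ has valuation $0=s-c$, consistent with the claim. I do not expect any genuine obstacle; the only subtlety is remembering to justify the unit property of $\binom{p^s-1}{k-1}$ via Lucas, rather than computing directly with Legendre's formula $v_p(n!)=(n-s_p(n))/(p-1)$, which would also work but is somewhat longer.
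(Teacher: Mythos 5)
Your proof is correct and follows essentially the same route as the paper: both rewrite the binomial coefficient via the identity $k\binom{p^s}{k}=p^s\binom{p^s-1}{k-1}$ and invoke Lucas' theorem to see that $\binom{p^s-1}{k-1}$ is a $p$-adic unit. You merely spell out the Lucas step in more detail (base-$p$ digits of $p^s-1$ are all $p-1$) and note the harmless edge case $bp^c=p^s$.
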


\begin{proof} For $a\leq p^s$ we have
$a\binom{p^s}{a} = p^s\binom{p^s-1}{a-1}$,
and $p\not\vert\,\binom{p^s-1}{a-1}$ by Lucas' theorem, \cite{Lu}.
\end{proof}

\begin{lem}
\label{lem su}
Let $a\in\{0,\dots, p^s\}$,\, $a=bp^c$, $p\not\vert \,b$. 
Consider the vector
\bea
 V=(-1)^{p^s-a}\binom{p^s}{a} z_j^{p^s-a}P^{lp^r-a-1}(z, \vec M)
 \eea
 appearing in \eqref{su}.
If $lp^r\leq a$, then $V=0$. For $lp^r> a$, we write $lp^r-a=vp^u$, 
where $u=\min(r,c)$. Then
\bean
\label{Vf}
V \,= \,d(z) \,p^{r-u} I^{[vp^u-1]}(z,\vec M),
\eean
where $d(z) = z_j^{p^s-a}(-1)^{p^s-a}\binom{p^s}{a}/p^{r-u}$ is a quasi-constant modulo
$p^u$.
\end{lem}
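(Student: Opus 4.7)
The plan is to split on whether $lp^r \leq a$ or $lp^r > a$. The first case is immediate, because $P(x,z,\vec M) = \sum_{i \geq 0} P^i(z,\vec M)\,x^i$ is a polynomial in $x$, so $P^{lp^r-a-1}(z,\vec M) = 0$ whenever the index $lp^r - a - 1$ is negative, forcing $V = 0$.

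Assume now $lp^r > a$. I would first check that $p^u$ with $u = \min(r,c)$ divides $lp^r - a = lp^r - bp^c$, so that $v := (lp^r - a)/p^u$ is a positive integer and $P^{lp^r - a - 1}(z,\vec M) = P^{vp^u - 1}(z,\vec M) = I^{[vp^u - 1]}(z,\vec M)$ by definition. Then $V = (-1)^{p^s-a}\binom{p^s}{a} z_j^{p^s-a}\, I^{[vp^u-1]}(z,\vec M)$, and what remains is to (i) show that $p^{r-u}$ divides $\binom{p^s}{a}$, so that $d(z) = (-1)^{p^s-a}\binom{p^s}{a} z_j^{p^s-a}/p^{r-u}$ has integer coefficients, and (ii) verify that $d(z)$ is quasi-constant modulo $p^u$.

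For (i), Lemma \ref{lem divi} gives $v_p\binom{p^s}{a} = s - c$. Splitting into $c \leq r$ (so $u = c$ and $r - u = r - c \leq s - c$) and $c > r$ (so $u = r$ and $r - u = 0$), the inequality $r - u \leq s - c$ holds in both branches because $r \leq s$. For (ii), since $d(z)$ depends only on $z_j$, only $\partial d/\partial z_j$ needs to be controlled. If $a = p^s$ (which forces $b = 1$, $c = s$) then $d \equiv 1$ and there is nothing to check; otherwise $c < s$ and the factorization $p^s - a = p^c(p^{s-c} - b)$ has $v_p(p^s - a) = c$ because $p \nmid b$. Consequently $\partial d/\partial z_j$ has $v_p = c + (s-c) - (r - u) = s - r + u \geq u$, giving $\partial d/\partial z_j \in p^u \Z[z]$ as required.

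The main obstacle is purely bookkeeping---lining up the three $p$-adic valuations ($v_p(p^s - a) = c$, $v_p\binom{p^s}{a} = s - c$, and the extracted prefactor $p^{r-u}$) across the branches $c \leq r$ and $c > r$. No individual identity is subtle, but the quasi-constancy bound relies on all three contributions conspiring to give $s - r + u$ on the nose, which is the delicate arithmetic to double-check; the $r \leq s$ hypothesis (which comes from the filtration index $r$ lying in $\{1,\dots,s\}$ in (5.2)--(5.3)) is exactly what makes each of these inequalities tight.
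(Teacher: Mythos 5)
Your proof is correct and follows the same route the paper sketches: the paper's entire proof is the one-line statement that the lemma follows from Lemma \ref{lem divi}, and you have simply filled in the details of that deduction (the identification $P^{lp^r-a-1}=I^{[vp^u-1]}$, the divisibility $p^{r-u}\mid\binom{p^s}{a}$ via $v_p\binom{p^s}{a}=s-c$, and the valuation count $v_p(\partial d/\partial z_j)=s-r+u\geq u$ from $v_p(p^s-a)=c$). The edge cases $a=p^s$ and $lp^r\leq a$ are handled correctly, and the role of $r\leq s$ in closing each inequality is exactly right.
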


\begin{proof} 
The lemma follows from Lemma \ref{lem divi}.
\end{proof}

\begin{cor}
\label{cor 4.4}
For any $r=1,\dots, s$, we have $\mathcal{M}_{p^s}^r(\vec M')\subset \mathcal{M}_{p^s}^r(\vec M)$.
\qed

\end{cor}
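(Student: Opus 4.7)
The plan is to unfold the definition of $\mathcal{M}_{p^s}^r(\vec M')$ as a sum of $\Z[z]_{p^{r'}}$-combinations of the generators $p^{s-r'} I^{[lp^{r'}-1]}(z,\vec M')$ with $r'\leq r$, substitute the identity \eqref{su} to expand each $I^{[lp^{r'}-1]}(z,\vec M')$ in terms of the coefficient vectors $P^{lp^{r'}-a-1}(z,\vec M)$, and then apply Lemma \ref{lem su} termwise to recognize each summand as a quasi-constant multiple of a generator of $\mathcal{M}_{p^s}^{r''}(\vec M)$ for some $r''\leq r'\leq r$.

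Concretely, I would proceed as follows. First, take an arbitrary element
$$\xi=\pi_s\Bigl(\sum_{r'=1}^{r}\sum_{l\geq 1}c_{r',l}(z)\,p^{s-r'}I^{[lp^{r'}-1]}(z,\vec M')\Bigr)\in\mathcal{M}_{p^s}^r(\vec M'),\qquad c_{r',l}(z)\in\Z[z]_{p^{r'}}.$$
Second, apply formula \eqref{su} with $r$ replaced by $r'$ to rewrite each $I^{[lp^{r'}-1]}(z,\vec M')=P^{lp^{r'}-1}(z,\vec M')$ as a sum, over $a=0,\dots,p^s$, of the vectors
$$V_a=(-1)^{p^s-a}\binom{p^s}{a}z_j^{p^s-a}P^{lp^{r'}-a-1}(z,\vec M).$$
Third, for each $a$ invoke Lemma \ref{lem su}: if $lp^{r'}\leq a$ then $V_a=0$, and otherwise, writing $a=bp^c$ with $p\nmid b$, $lp^{r'}-a=vp^u$, $u=\min(r',c)$, we have $V_a=d_a(z)\,p^{r'-u}I^{[vp^u-1]}(z,\vec M)$ with $d_a(z)$ a quasi-constant modulo $p^u$.

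Fourth, multiply back by the outer factor $p^{s-r'}$ to obtain
$$p^{s-r'}V_a=d_a(z)\,p^{s-u}I^{[vp^u-1]}(z,\vec M),$$
and then by $c_{r',l}(z)$. Since $u\leq r'$, the inclusion $\Z[z]_{p^{r'}}\subset\Z[z]_{p^u}$ from the filtration of quasi-constants ensures that the product $c_{r',l}(z)\,d_a(z)$ lies in $\Z[z]_{p^u}$. Thus each term $c_{r',l}(z)\,p^{s-r'}V_a$ is precisely one of the generators of $\mathcal{M}_{p^s}^u(\vec M)$ from \eqref{Def mst}, and since $u\leq r'\leq r$ it lies in $\mathcal{M}_{p^s}^r(\vec M)$. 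Summing over $a$, $l$, $r'$ and applying $\pi_s$ shows $\xi\in\mathcal{M}_{p^s}^r(\vec M)$, as required.

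I do not expect a serious obstacle here: the computation is already laid out by \eqref{su} and Lemma \ref{lem su}, and the proof amounts to careful bookkeeping. The one subtle point worth checking explicitly is the inequality $u\leq r'\leq r$, which is exactly what guarantees both that the product of quasi-constants remains a quasi-constant of the required level and that the resulting generator of $\mathcal{M}_{p^s}^u(\vec M)$ falls within the level $r$ of the filtration \eqref{filT}; this is the only place where the structure of the filtration is really used.
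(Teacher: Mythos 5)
Your proposal is correct and is exactly the (implicit) argument the paper intends: after the paper establishes formula \eqref{su} and Lemma \ref{lem su}, Corollary \ref{cor 4.4} is stated with a bare \qed, meaning the verification you spell out --- expand $I^{[lp^{r'}-1]}(z,\vec M')$ via \eqref{su}, rewrite each surviving term $V_a$ as $d_a(z)\,p^{r'-u}I^{[vp^u-1]}(z,\vec M)$ with $u=\min(r',c)\leq r'$, absorb the quasi-constants using $\Z[z]_{p^{r'}}\subset\Z[z]_{p^u}$, and observe the result lands in level $u\leq r$ of filtration \eqref{filT} for $\vec M$ --- is precisely what is meant. The only small wrinkle, inherited from the phrasing of Lemma \ref{lem su} itself, is that the decomposition $a=bp^c$ with $p\nmid b$ does not apply when $a=0$; but that boundary term is $V_0=-z_j^{p^s}\,I^{[lp^{r'}-1]}(z,\vec M)$, and since $z_j^{p^s}\in\Z[z]_{p^s}\subset\Z[z]_{p^{r'}}$ it sits directly in level $r'$, so the conclusion is unaffected.
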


\begin{lem}
\label{lem 4.5}
For any $r=1,\dots, s$, we have $\mathcal{M}_{p^s}^r(\vec M')\supset \mathcal{M}_{p^s}^r(\vec M)$.
\end{lem}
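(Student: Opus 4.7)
My plan is to prove $\mathcal{M}_{p^s}^r(\vec M)\subseteq \mathcal{M}_{p^s}^r(\vec M')$ by a double induction: an outer induction on $r\in\{0,1,\dots,s\}$ (with $r=0$ trivial), and, within each outer step, an inner downward induction on the index $l$ of the generators $p^{s-r}I^{[lp^r-1]}(z,\vec M)$. Since $\mathcal{M}_{p^s}^r(\vec M)$ is generated over $\Z[z]_{p^s}$ by $\mathcal{M}_{p^s}^{r-1}(\vec M)$ together with these classes, and the former lies in $\mathcal{M}_{p^s}^{r-1}(\vec M')\subseteq\mathcal{M}_{p^s}^r(\vec M')$ by the outer hypothesis, it suffices to place each $p^{s-r}I^{[lp^r-1]}(z,\vec M)$ into $\mathcal{M}_{p^s}^r(\vec M')$ modulo $p^s$.

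The key device is to apply the polynomial identity underlying \eqref{su} not at $L=lp^r-1$ but at the shifted index $L^{\ast} := (l+p^{s-r})p^r - 1 = lp^r-1+p^s$. Since $\deg\Phi(x,z,\vec M')=\deg\Phi(x,z,\vec M)+p^s$, this $L^\ast$ lies in the valid $\vec M'$-range exactly when $lp^r-1$ lies in the valid $\vec M$-range, so $I^{[L^\ast]}(z,\vec M')$ will be a legitimate generator of $\mathcal{M}_{p^s}^r(\vec M')$. The $a=p^s$ summand in the identity equals $P^{L^\ast-p^s}(z,\vec M)=I^{[lp^r-1]}(z,\vec M)$ with coefficient $+1$, so solving for it and multiplying by $p^{s-r}$ yields
\[
p^{s-r}I^{[lp^r-1]}(z,\vec M) = p^{s-r}I^{[L^\ast]}(z,\vec M') - p^{s-r}\!\!\sum_{a=0}^{p^s-1}(-1)^{p^s-a}\binom{p^s}{a}z_j^{p^s-a}P^{L^\ast-a}(z,\vec M),
\]
reducing the task to showing each term on the right already lies in $\mathcal{M}_{p^s}^r(\vec M')$.

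The $a=0$ term equals $-z_j^{p^s}P^{L^\ast}(z,\vec M)$: if $l+p^{s-r}$ is outside the valid $l$-range for $\vec M$ it vanishes by degree, and otherwise it is the quasi-constant $-z_j^{p^s}$ times $I^{[(l+p^{s-r})p^r-1]}(z,\vec M)$, whose index exceeds $l$ and is therefore handled by the downward inductive hypothesis. For each $a$ with $0<a<p^s$, I will write $a=bp^c$ and apply Lemma~\ref{lem su} with $l+p^{s-r}$ in place of $l$: the $a$-th summand becomes $d(z)\,p^{r-u}I^{[vp^u-1]}(z,\vec M)$ with $u=\min(r,c)$ and $d(z)$ a quasi-constant mod $p^u$. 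When $u<r$, the result multiplied by $p^{s-r}$ has $p$-adic valuation $s-u$ and is a generator of $\mathcal{M}_{p^s}^{r-1}(\vec M)$, hence lies in $\mathcal{M}_{p^s}^{r-1}(\vec M')\subseteq\mathcal{M}_{p^s}^r(\vec M')$ by the outer hypothesis. When $u=r$ (so $c\ge r$), the inequality $b<p^{s-c}\le p^{s-r}$ forces $v=l+p^{s-r}-bp^{c-r}>l$, and $p^{s-r}I^{[vp^r-1]}(z,\vec M)$ is covered by the downward inductive hypothesis on $l$ (or vanishes when $v$ exceeds the $\vec M$-range).

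The main obstacle will be the combinatorial verification in the last paragraph---in particular the strict inequality $v>l$ in the subcase $u=r$, which is what makes the inner downward induction on $l$ well-founded. At the base of that inner induction (the largest valid $l$) the shift $l\mapsto l+p^{s-r}$ already exceeds the $\vec M$-range, so the $a=0$ and $u=r$ contributions automatically vanish and the initialization closes without circular reference.
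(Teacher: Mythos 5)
Your argument is correct and follows the same core strategy as the paper's proof: expand $(x-z_j)^{p^s}$ binomially, read off the Taylor-coefficient identity at the shifted index $L^{\ast}=(l+p^{s-r})p^r-1$, and classify the summands via Lemma~\ref{lem su}. You package the paper's ``invert the triangular system, then recurse over $k<r$'' step as a single double induction (outer on $r$, inner downward on $l$), which is cleaner and more self-contained; note also that your finding $v>l$ for the off-diagonal $r$-level terms is the correct direction, consistent with a downward induction (the sum $\sum_{m=1}^{l-1}$ in the paper's display \eqref{syst r} appears to be a misprint for a sum over $m$ with $l<m\leq l+p^{s-r}$, as your computation shows).
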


\begin{proof}

Let  $w$ be 
the greatest integer such that $wp^r \leq \deg_t P (t,z,\vec M)$.
Then $w' = w+p^{s-r}$ is the greatest integer such that $w'p^r \leq \deg_t P (t,z,\vec M')$.
Comparing the coefficients in \eqref{su} and using Lemma \ref{su}, we observe that
for any 
$l=1,\dots,w$ we have
\bean
\label{syst r}
&&
I^{[(l+p^{s-r})p^r-1]}(z,\vec M') 
=
 I^{[lp^r-1]}(z,\vec M) 
\\
\notag
&&
\phantom{aaaa}
+
 \sum_{m=1}^{l-1} c_{r,m}(z) I^{[mp^r-1]}(z,\vec M) 
+ 
\sum_{k=1}^{r-1} \sum_{m\geq 1} c_{k,m}(z) \,p^{r-k} I^{[mp^k-1]}(z,\vec M),
\eean
where $c_{i,j}(z)\in \Z[z]_{p^i}$. This triangular system of equations with respect to 
$ I^{[lp^r-1]}(z,\vec M)$, $l=1,\dots,w$,
can be written as
\bean
\label{syst r'}
\phantom{aaa}
 I^{[lp^r-1]}(z,\vec M)  =
 \sum_{m\geq 1} c_{r,m}'(z) I^{[mp^r-1]}(z,\vec M') 
+ \sum_{k=1}^{r-1} \sum_{m\geq 1} c_{k,m}'(z) \,p^{r-k} I^{[mp^k-1]}(z,\vec M),
\eean
$l=1,\dots,w$, for suitable $c_{i,j}'(z)\in \Z[z]_{p^i}$. Applying the previous reasoning to the sum
$\sum_{k=1}^{r-1} \sum_{m\geq 1} c_{k,m}(z) \,p^{r-k} I^{[mp^k-1]}(z,\vec M)$, we obtain
\bean
\label{syst r''}
\phantom{aaa}
 I^{[lp^r-1]}(z,\vec M)  =
 \sum_{m\geq 1} c_{r,m}'(z) I^{[mp^r-1]}(z,\vec M') 
+ \sum_{k=1}^{r-1} \sum_{m\geq 1} c_{k,m}''(z) \,p^{r-k} I^{[mp^k-1]}(z,\vec M'),
\eean
$l=1,\dots,w$, for suitable $c_{i,j}''(z)\in \Z[z]_{p^i}$. This proves the lemma.

\end{proof}

\begin{cor}
The module $\mathcal{M}_{p^s}(\vec M)$ and 
filtration \eqref{filT} do not depend on the choice of $\vec M=(M_1,\dots,M_n)$, 
satisfying congruences  \eqref{M_ii}. 
\qed

\end{cor}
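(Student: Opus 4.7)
The plan is to bootstrap the Corollary from the single-coordinate equality already established: Corollary~\ref{cor 4.4} together with Lemma~\ref{lem 4.5} give $\mathcal{M}_{p^s}^r(\vec M') = \mathcal{M}_{p^s}^r(\vec M)$ whenever $\vec M'$ is obtained from $\vec M$ by an increment of $p^s$ in a single coordinate. Such an increment preserves the congruence $M_j \equiv -1/2 \pmod{p^s}$, so the new vector is again valid. The remaining task is purely combinatorial: to link any two valid choices of $\vec M$ by a chain of such single-coordinate moves.

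The set of valid vectors is a shifted lattice: every $\vec M$ with $M_i>0$ and $M_i \equiv -1/2 \pmod{p^s}$ has the form $\vec M = \vec M_{\min} + p^s \vec k$, where $\vec M_{\min} = \bigl(\tfrac{p^s-1}{2}, \dots, \tfrac{p^s-1}{2}\bigr)$ and $\vec k \in \Z_{\geq 0}^n$. Given two valid vectors $\vec M_1, \vec M_2$ with corresponding $\vec k_1, \vec k_2$, I would form the coordinate-wise maximum $\vec k_{\max} = \vec k_1 \vee \vec k_2$ and the associated valid vector $\vec M_{\max}$. Then $\vec M_{\max}$ is reachable from each of $\vec M_1$ and $\vec M_2$ by finitely many single-coordinate $+p^s$ shifts (namely $|\vec k_{\max}|-|\vec k_i|$ many, performed in any order). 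Applying the single-step equality at each link of these two chains yields
\[
\mathcal{M}_{p^s}^r(\vec M_1) \;=\; \mathcal{M}_{p^s}^r(\vec M_{\max}) \;=\; \mathcal{M}_{p^s}^r(\vec M_2)
\]
for every $r = 0, 1, \dots, s$, which is the Corollary.

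Since the substantive content --- controlling $p$-adic valuations of binomial coefficients via Lemma~\ref{lem divi}, rewriting the shifted solutions via Lemma~\ref{lem su}, and inverting the resulting triangular system as in Lemma~\ref{lem 4.5} --- has already been handled in the preceding arguments, I do not anticipate a serious obstacle. The only mild care needed is to apply the single-step equality one link at a time along each chain rather than attempting to collapse the whole chain into a single multi-step argument; this is automatic because the chain construction reduces the general statement to iterating the established one-step equality, and the triangularity underlying Lemma~\ref{lem 4.5} is preserved at each link.
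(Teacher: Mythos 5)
Your proposal is correct and matches the paper's approach: the paper reduces the statement to the single-coordinate increment case (Corollary~\ref{cor 4.4} and Lemma~\ref{lem 4.5}) and leaves the connectivity-by-chains argument implicit, which you spell out explicitly via the coordinate-wise maximum. One small note: the increment in the paper is misprinted as $\tfrac{p^s-1}{2}$ but must be $p^s$ (as the subsequent identity $P(x,z,\vec M')=P(x,z,\vec M)(x-z_j)^{p^s}$ shows), and you correctly use $p^s$.
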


\begin{lem}
Let $\vec M^{\on{min}} = (\frac{p^s-1}2, \dots, \frac{p^s-1}2)$. Then
$\mc M_{p^s}(\vec M^{\on{min}}) = \mc M_{p^s}$.

\end{lem}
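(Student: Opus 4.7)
The plan is to exploit the factorization
\[
\Phi_{p^s}(x,z) \;=\; \Phi_{p^r}(x,z)\cdot \Phi_{p^{s-r}}(x,z)^{p^r},
\qquad 1\le r\le s,
\]
immediate from $(p^s-1)/2 = (p^r-1)/2 + p^r(p^{s-r}-1)/2$. Multiplying by the column vector $(1/(x-z_j))_{j=1}^n$ and taking the coefficient of $x^{lp^r-1}$ gives
\[
I^{[lp^r-1]}(z,\vec M^{\on{min}}) \;=\; \sum_{k\ge 0} R^{(r)}_k(z)\, P_{p^r}^{lp^r-1-k}(z),
\qquad R^{(r)}_k(z):=[x^k]\Phi_{p^{s-r}}(x,z)^{p^r}.
\]
Two properties of $R^{(r)}_k$ drive everything: from $\tfrac{\der}{\der z_i}(f^{p^r})=p^r f^{p^r-1}\tfrac{\der f}{\der z_i}$, each $R^{(r)}_k(z)$ is a quasi-constant modulo $p^r$; and iterating the Frobenius congruence $f^p\equiv f(x^p,z^p)\pmod p$ yields the sharper divisibility $R^{(r)}_k(z)\in p^{\,r-\min(r,v_p(k))}\Z[z]$.

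For the inclusion $\mc M_{p^s}(\vec M^{\on{min}})\subset \mc M_{p^s}$ I would split the sum by $v_p(k)$. Terms with $k=jp^r$ contribute $R^{(r)}_{jp^r}(z)\cdot I^{[(l-j)p^r-1]}_{p^r}(z)$, which after multiplication by $p^{s-r}$ lie in $\mc M_{p^s}^r$, and vanish automatically when $l-j>g$ since $P_{p^r}^{(l-j)p^r-1}=0$ for degree reasons. For a term with $v_p(k)=u<r$, the extra factor $p^{r-u}$ hidden in $R^{(r)}_k$ combines with $p^{s-r}$ to give $p^{s-u}$, while the index $lp^r-1-k$ equals $mp^u-1$ with $m=(lp^r-k)/p^u$; applying the same factorization one level deeper, $P_{p^r}(x,z)=\Phi_{p^{r-u}}(x,z)^{p^u}\cdot P_{p^u}(x,z)$, rewrites such a term as a $\Z[z]_{p^u}$-linear combination of level-$u$ generators $p^{s-u}I^{[m'p^u-1]}_{p^u}(z)$, with any residual non-$p^u$-divisible indices absorbed by iterating the argument on strictly smaller valuations.

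For the reverse inclusion $\mc M_{p^s}\subset \mc M_{p^s}(\vec M^{\on{min}})$, the same identity, read modulo $p^s$ and modulo the submodule generated by strictly lower filtration levels, becomes a lower-triangular relation
\[
p^{s-r}I^{[lp^r-1]}(z,\vec M^{\on{min}}) \;\equiv\; \sum_{m=1}^{l} R^{(r)}_{(l-m)p^r}(z)\,p^{s-r}I^{[mp^r-1]}_{p^r}(z)
\]
whose diagonal coefficient $R^{(r)}_0(z)=\pm(z_1\cdots z_n)^{p^r(p^{s-r}-1)/2}$ is itself a quasi-constant modulo $p^r$. Since in $\mc M_{p^s}(\vec M^{\on{min}})$ the index $l$ runs strictly beyond $\{1,\dots,g\}$, one inverts this lower-triangular system level by level, in the spirit of the triangular passage from \eqref{syst r} to \eqref{syst r''} in the proof of Lemma \ref{lem 4.5}; descending induction on $r$ then puts every $p^{s-r}I^{[mp^r-1]}_{p^r}(z)$ inside $\mc M_{p^s}(\vec M^{\on{min}})$.

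The main technical obstacle is twofold. First, the sharp Frobenius divisibility $R^{(r)}_k\in p^{\,r-\min(r,v_p(k))}\Z[z]$ must be proved by induction on $r$ via $(f^{p^{r-1}})^p$ together with the standard identity $v_p\!\binom{p^a}{j}=a-v_p(j)$; without this stronger-than-naive estimate, the ``bad'' terms in the expansion cannot be absorbed into lower-level pieces of $\mc M_{p^s}$. Second, the combinatorial ``inversion modulo $p^s$'' of the triangular system in the reverse direction is delicate, because the diagonals $R^{(r)}_0(z)$ are nonconstant monomials in $z$ and hence not literally invertible in $\Z[z]$; the inversion has to be performed by exploiting the extra freedom of the many generators of $\mc M_{p^s}(\vec M^{\on{min}})$ indexed by $l>g$, which are precisely what allow the redundant, higher-degree part of the monomial $R^{(r)}_0(z)$ to be cleared.
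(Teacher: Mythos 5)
Your plan is in fact the route the paper indicates: the paper's own proof of this lemma is a one-liner saying it is "similar to the proof of Corollary \ref{cor 4.4} and Lemma \ref{lem 4.5}," and your factorization $\Phi_{p^s}=\Phi_{p^r}\cdot\Phi_{p^{s-r}}^{p^r}$ is precisely the replacement for the factor $(x-z_j)^{p^s}$ used there. The two technical inputs you isolate — that $R^{(r)}_k(z)$ is a quasi-constant modulo $p^r$, and the sharp Frobenius estimate $R^{(r)}_k\in p^{\,r-\min(r,\,v_p(k))}\Z[z]$ — are correct and are the exact analogues of Lemma \ref{lem divi} and Lemma \ref{lem su}; moreover the divided-out factor $R^{(r)}_k/p^{\,r-v}$ is automatically a quasi-constant modulo $p^{v}$ (differentiate and divide), which is what lets the recursion on strictly smaller valuations close up for the inclusion $\mc M_{p^s}(\vec M^{\min})\subset\mc M_{p^s}$.

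The one place where your write-up is off is the reverse inclusion. You extract the "diagonal" at $m=l$, i.e.\ at $k=0$, where the coefficient is the constant term $R^{(r)}_0(z)=\pm(z_1\cdots z_n)^{p^r(p^{s-r}-1)/2}$, and then have to explain away its non-invertibility. The system is triangular from the \emph{other} end: the top-degree coefficient of the monic polynomial $\Phi_{p^{s-r}}(x,z)^{p^r}$ is $R^{(r)}_{Np^r}=1$ with $N:=n(p^{s-r}-1)/2$, and for that value of $k$ one has $P_{p^r}^{lp^r-1-k}=I^{[(l-N)p^r-1]}_{p^r}(z)$. Writing the relation with $l$ replaced by $l+N$ gives
\[
I^{[(l+N)p^r-1]}(z,\vec M^{\min})\ =\ I^{[lp^r-1]}_{p^r}(z)\ +\ \sum_{m<l}R^{(r)}_{(l+N-m)p^r}(z)\,I^{[mp^r-1]}_{p^r}(z)\ +\ (\text{lower levels}),
\]
which is unit-lower-triangular exactly as in the passage from \eqref{syst r} to \eqref{syst r''}; descending induction on $r$ then finishes the argument. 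This is, I think, what you mean by "exploiting the extra freedom of indices $l>g$" — but the point is not to clear a monomial, it is that shifting $l$ by $N$ puts you on the monic end of $\Phi_{p^{s-r}}^{p^r}$, so no inversion of a nonconstant quasi-constant is ever needed. With that correction the proof is complete and matches the argument the paper refers to.
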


\begin{proof}  
The proof of the  lemma is  a straightforward modification  of the proof of 
Corollary \ref{cor 4.4} and Lemma \ref{lem 4.5}.
\end{proof}

Theorem \ref{thm ind} is proved.
\end{proof}

\section{Filtrations and homomorphisms}
\label{sec6}

\subsection{Reduction from modulo $p^s$ to modulo $p^{s-m}$}
If $I(z)$ 
is a polynomial solution of system \eqref{KZ} modulo $p^s$, then
$I(z)$  is also a polynomial solution of system \eqref{KZ} modulo $p^{s-m}$ for any $ 1\leq m < s$. 
This defines a map 
\bean
\label{RijM}
&&
\frak r_{s,s-m} : 
\mathcal{M}_{p^s}(\vec M) \to \mathcal{M}_{p^{s-m}}(\vec M),
\\
\notag
&&
 \pi_{s} 
\Big(\sum_{r=1}^s\sum_{l\geq 1} c_{r,l}(z) \,p^{s-r} I^{[lp^r-1]}(z,\vec M)\Big)
\mapsto
 \pi_{s-m} 
\Big(\sum_{r=m+1}^s\sum_{l\geq 1} c_{r,l}(z) \,p^{s-r} I^{[lp^r-1]}(z,\vec M)\Big),
\eean
where $\vec M$ is a vector with coordinates satisfying congruences \eqref{M_ii}.
See these sums  in \eqref{Def mst}.
In the last sum we have
$p^{s-r}I^{[lp^r-1]}(z,\vec M) =p^{s-m-(r-m)}I^{[(lp^{m})p^{r-m}-1]}(z,\vec M)$
and a solution $I^{[lp^r-1]}(z,\vec M)$ modulo $p^r$
 also can  be considered as a solution $I^{[(lp^{m})p^{r-m}-1]}(z,\vec M)$
modulo $p^{r-m}$.

For any $r=1,\dots,s$, the submodule $\mathcal{M}_{p^s}^r(\vec M)\subset \mathcal{M}_{p^s}(\vec M)$
is mapped by $\frak r_{s,s-m}$ to the submodule $\mathcal{M}_{p^{s-m}}^{r-m}(\vec M)$. The induced map
\bean
\label{RMr}
&&
\frak r_{s,s-m} : 
\mathcal{M}_{p^s}^r(\vec M) \to \mathcal{M}_{p^t}^{r-m}(\vec M)
\eean
is a homomorphism of $\Z[z]_{p^r}$-modules. Thus the map \eqref{RijM} is a homomorphism of filtered modules
decreasing the index of filtration by $m$.

By  Theorem \ref{thm ind} we have $\mathcal{M}_{p^s}(\vec M) = \mathcal{M}_{p^s}$. Hence
homomorphism \eqref{RijM} also can  be considered as a homomorphism of filtered modules,
\bean
\label{Rij}
&&
\frak r_{s,s-m} : 
\mathcal{M}_{p^s} \to \mathcal{M}_{p^{s-m}}\,,
\eean
decreasing  the index of filtration by $m$.

It is rather nontrivial to write a formula for this map in terms of the generators
$I^{[lp^r-1]}(z)$ of these modules.

\subsection{Multiplication by $p^m$}

If $I(z)$ is a polynomial solution of system \eqref{KZ} modulo $p^s$, then
for any positive integer $m$ the polynomial $p^mI(z)$ is a polynomial solution 
of system \eqref{KZ} modulo $p^{s+m}$. In particular, multiplication by $p^m$ defines
a map
\bean
\label{p^r}
&&
\frak p_{s,s+m} :  \mathcal{M}_{p^s}^t \to \mathcal{M}_{p^{s+m}}^t\,,
\\
\notag
&&
\pi_{s}\Big(\sum_{r=1}^t\sum_{l=1}^g c_{r,l}(z) \,p^{s-r} I^{[lp^r-1]}_{p^r}(z) \Big)
\mapsto 
\pi_{s+m}\Big(\sum_{r=1}^t\sum_{l=1}^g c_{r,l}(z) \,p^{s+m-r} I^{[lp^r-1]}_{p^r}(z) \Big)
\eean
for any $t=1,\dots,s$.\,
See these sums in \eqref{Def Mst}. Clearly this map
 is an isomorphism of filtered $\Z[z]_{p^t}$-modules.

\subsection{The composition of homomorphisms}
For $m<s$ denote by $\frak c_{s,m}$ the composition $\frak p_{s-m,m} \frak r_{s,s-m}$,
\bean
\label{dec c}
\frak c_{s,m}\ :\ \mc M_{p^s} \to \mc M_{p^s}\,,
\quad
I(z) \mapsto p^mI(z)\,.
\eean
For any $t=1,\dots,s$, this map induces a homomorphism 
$\mc M_{p^s}^t \to \mc M_{p^s}^{t-m}$ of $\Z[z]_{p^t}$-modules.

\vsk.2>
We have $\frak c_{s,m} = (\frak c_{s,1})^m$ for $m<s$ and $\frak c_{s,m} = (\frak c_{s,1})^m=0$ for $m\geq s$.

\vsk.2>
As we know, the module $\mc M_{p^s}$ is generated by the elements
$\pi_s (p^{s-r} I^{[lp^r-1]}_{p^r}(z))$, $r=1,\dots,s$, $l=1,\dots$, $g$. For $l=1,\dots,g$,
we have
\bean
\label{fffc}
\frak c_{s,1}\ :\ \pi_s(I^{[lp^s-1]}(z)) \ \mapsto  \
\pi_s(pI^{[lp^s-1]}(z)) =
\pi_{s} \Big(\sum_{r=1}^{s-1}\sum_{k=1}^g c_{r,k}^{l,s}(z) \,p^{s-r} I^{[kp^r-1]}_{p^r}(z)\Big) 
\eean
for  suitable coefficients $ c_{r,k}^{l,s}(z)\in\Z[z]_{p^{r}}$.

\vsk.2>

 The set of the coefficients $(c_{r,k}^{l,s}(z))_{l,s, r,k}$ 
 determines the homomorphisms $\frak c_{s,m}$ for all $s,m$.   In what follows we shall describe the coefficients
$c_{s-1, k}^{l,s}(z)$ for all  $l,s,k$,  see Theorem \ref{thm mgfc}.

\subsection{Graded modules and homomorphisms}
Denote
\bean
\label{def grM}
\gr \mc M_{p^s}\, =\,\oplus _{t=1}^s\, \gr \mc M_{p^s}^t\,,
\qquad
\gr \mc M_{p^s}^t \,=\, \mc M_{p^s}^t\Big/  \mc M_{p^s}^{t-1}\,.
\eean

\begin{lem}
\label{lem grmo}

For any $t=1,\dots,s$, the action of\, $\Z[z]_{p^t}$ on $\mc M_{p^s}^t$ makes $\gr \mc M_{p^s}^t$ an 
$\F_p[z^{p^t}]$-module. Multiplication by $p$ on $\mc M_{p^s}^t$
induces a homomorphism
\bean
\label{fkcg}
\gr \frak c_{s,1} \ :\ \gr \mc M_{p^s}^t \to \gr \mc M_{p^s}^{t-1}
\eean
of $\F_p[z^{p^t}]$-modules.
\qed

\end{lem}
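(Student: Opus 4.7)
The plan is to establish two ingredients and combine them. Ingredient (a) is that multiplication by $p$ sends $\mc M_{p^s}^t$ into $\mc M_{p^s}^{t-1}$ for every $t\geq 1$ (with $\mc M_{p^s}^{0}=0$). Ingredient (b) is the decomposition $\Z[z]_{p^t}=\Z[z^{p^t}]+p\,\Z[z]_{p^{t-1}}$ together with $\Z[z^{p^t}]\cap p\,\Z[z]_{p^{t-1}}=p\,\Z[z^{p^t}]$; these yield a canonical identification $\Z[z]_{p^t}/p\,\Z[z]_{p^{t-1}}\cong \F_p[z^{p^t}]$.

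The main work is in (a), which I would check on the generators of $\mc M_{p^s}^r\subset \mc M_{p^s}^t$ for each $r=1,\dots,t$. Take a generator $\pi_s\bigl(p^{s-r}I^{[lp^r-1]}_{p^r}(z)\bigr)$; multiplication by $p$ gives $\pi_s\bigl(p^{s-r+1}I^{[lp^r-1]}_{p^r}(z)\bigr)$. The case $r=1$ is trivial since the result is $0$ modulo $p^s$. For $r\geq 2$, set $\vec M=\bigl((p^r-1)/2,\dots,(p^r-1)/2\bigr)$ and rewrite $I^{[lp^r-1]}_{p^r}(z)=I^{[(lp)p^{r-1}-1]}(z,\vec M)$. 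Since $(p^r-1)/2\equiv -1/2\pmod{p^{r-1}}$, the vector $\vec M$ also satisfies congruence \eqref{M_ii} at level $s=r-1$, and the displayed polynomial is therefore one of the generators appearing in the description of $\mc M_{p^{r-1}}(\vec M)$. By Theorem \ref{thm ind}, $\mc M_{p^{r-1}}(\vec M)=\mc M_{p^{r-1}}$, so $\pi_{r-1}\bigl(I^{[lp^r-1]}_{p^r}(z)\bigr)\in\mc M_{p^{r-1}}$. Expanding this membership and lifting to $\Z[z]^n$ yields
\[
I^{[lp^r-1]}_{p^r}(z)\ =\ \sum_{r'=1}^{r-1}\sum_{k=1}^{g} c_{r',k}(z)\,p^{r-1-r'}\,I^{[kp^{r'}-1]}_{p^{r'}}(z)\ +\ p^{r-1}R(z)
\]
for suitable $c_{r',k}(z)\in\Z[z]_{p^{r'}}$ and $R(z)\in\Z[z]^n$. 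Multiplication by $p^{s-r+1}$ turns the remainder into $p^s R(z)\equiv 0\pmod{p^s}$, while the main sum becomes a generic element of $\mc M_{p^s}^{r-1}\subset\mc M_{p^s}^{t-1}$, proving (a).

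Ingredient (b) is a direct consequence of Lemma \ref{lem quasi-c}: the spanning monomials $p^{t-u}z^d$ with $u=t$ lie in $\Z[z^{p^t}]$, those with $u<t$ factor as $p\cdot(p^{t-1-u}z^d)\in p\,\Z[z]_{p^{t-1}}$, and if $f\in\Z[z^{p^t}]$ has $f=pg$ with $g\in\Z[z]_{p^{t-1}}$, then comparing coefficients forces $g\in\Z[z^{p^t}]$, giving the intersection identity. To finish the lemma, observe that $\mc M_{p^s}^{t-1}$ is a $\Z[z]_{p^{t-1}}$-submodule of $\mc M_{p^s}^t$, hence $\Z[z]_{p^t}$-stable, so $\gr\mc M_{p^s}^t$ carries a well-defined $\Z[z]_{p^t}$-action. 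For $f\in\Z[z]_{p^{t-1}}$ and $I\in\mc M_{p^s}^t$, (a) gives $pI\in\mc M_{p^s}^{t-1}$, whence $(pf)\cdot I=f\cdot(pI)\in\mc M_{p^s}^{t-1}$; thus $p\,\Z[z]_{p^{t-1}}$ acts as zero on $\gr\mc M_{p^s}^t$ and by (b) the action descends to an $\F_p[z^{p^t}]$-action. Applying (a) also at level $t-1$ shows $p\,\mc M_{p^s}^{t-1}\subset\mc M_{p^s}^{t-2}$, so multiplication by $p$ induces a well-defined homomorphism $\gr\frak c_{s,1}\colon\gr\mc M_{p^s}^t\to\gr\mc M_{p^s}^{t-1}$; its $\F_p[z^{p^t}]$-linearity is immediate from the equality $p(gI)=g(pI)$ in $\Z[z]^n$ for $g\in\Z[z^{p^t}]$. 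The main obstacle is step (a), and within it the identification of $\pi_{r-1}(I^{[lp^r-1]}_{p^r}(z))$ as an element of $\mc M_{p^{r-1}}$, which is exactly the place where Theorem \ref{thm ind} plays its essential role.
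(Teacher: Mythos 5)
Your proof is correct and takes essentially the same route as the paper, just with the details made explicit. The paper's own proof is two sentences: it asserts as clear that multiplication by $p$ sends $\mc M_{p^s}^t$ into $\mc M_{p^s}^{t-1}$, and it cites Lemma \ref{lem quasi-c} for the reduction of the $\Z[z]_{p^t}$-action to an $\F_p[z^{p^t}]$-action. Your ingredient (a) spells out what is hiding behind that ``clear'': the re-indexing $I^{[lp^r-1]}_{p^r}(z)=I^{[(lp)p^{r-1}-1]}(z,\vec M)$ with $\vec M=((p^r-1)/2,\dots,(p^r-1)/2)$ viewed modulo $p^{r-1}$, and then Theorem \ref{thm ind} to land in $\mc M_{p^{r-1}}$. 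This is exactly the mechanism the paper itself uses in Section 6.1 to define the reduction map $\frak r_{s,s-m}$ and to assert that $\frak c_{s,m}$ lowers filtration degree; so you are not taking a genuinely different path, you are consolidating the scattered argument into one place. Your ingredient (b), the decomposition $\Z[z]_{p^t}=\Z[z^{p^t}]+p\,\Z[z]_{p^{t-1}}$ with intersection $p\,\Z[z^{p^t}]$, is exactly the content of the paper's reference to Lemma \ref{lem quasi-c}. One small point worth being a bit more explicit about: after showing $p\cdot p^{s-r}I^{[lp^r-1]}_{p^r}(z)\bmod p^s\in\mc M_{p^s}^{r-1}$ on generators, you extend to a general element $\pi_s\bigl(\sum_{r,l}c_{r,l}(z)p^{s-r}I^{[lp^r-1]}_{p^r}(z)\bigr)$ by noting that $c_{r,l}\in\Z[z]_{p^r}\subset\Z[z]_{p^{r'}}$ for all $r'<r$, so the coefficient factors remain admissible after substitution; you gesture at this but do not say it, and it deserves a sentence.
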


\section{Coefficients of solutions}
\label{sec7}

\subsection{Homogeneous polynomials}

For $l=1,\dots,g$, the  solution $I^{[lp^s-1]}(z)=(I^{[lp^s-1]}_1$, \dots,  $I^{[lp^s-1]}_n)$ 
is a homogeneous polynomial in $z$ of degree
\bean
\label{deg I}
\delta_l = (2g+1)\,\frac{p^s-1}2\, - \,lp^s
=
(g-l)p^s + \frac{p^s-1}2 - g\,.
\eean
Notice that  $(-1)^{\delta_l}=(-1)^{s\frac{p-1}2 \,+\, l}$\,.

\subsection{Formula for coefficients}
Recall that $M=\frac{p^s-1}2$. Projection of this integer to $\Zs$ is invertible.
Let
\bea
I^{[lp^s-1]}(z) = \sum_{d_1,\dots,d_n} I^{[lp^s-1]}_{d_1,\dots,d_n} z_1^{d_1}\dots z_n^{d_n}\,,
\qquad I^{[lp^s-1]}_{d_1,\dots,d_n} \in \Z^n\,.
\eea

\begin{lem} [{\cite[Lemma 3.1]{V8}}]
\label{lem coef} 
We have
\bean
\label{Coe}
I^{[lp^s-1]}_{d_1,\dots,d_n} \,=\,(-1)^{\dl_l}
\prod_{i=1}^n\binom{M}{d_i}\,
\Big(1 - \frac{d_1}{M}, \dots , 1 - \frac{d_n}{M}\Big).
\eean
The sum of coordinates of this vector is divisible by $p^s$.
\qed
\end{lem}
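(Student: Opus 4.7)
The plan is to prove the formula by direct computation of the coefficient of $x^{lp^s-1}$ in each component of $P_{p^s}(x,z)$, and then derive the divisibility by $p^s$ as a consequence.

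First, I would fix $j$ and expand the $j$-th component
\[
\frac{\Phi_{p^s}(x,z)}{x-z_j}\,=\,(x-z_j)^{M-1}\prod_{i\ne j}(x-z_i)^M
\]
using the binomial theorem in each factor. The coefficient of $x^{nM-1-\sum d_i}\prod_i(-z_i)^{d_i}$ is $\binom{M-1}{d_j}\prod_{i\ne j}\binom{M}{d_i}$. Extracting the coefficient of $x^{lp^s-1}$ forces $\sum_i d_i = nM-lp^s=\delta_l$, so the overall sign is $(-1)^{\delta_l}$. Since $n = 2g+1$ and $M=(p^s-1)/2$, this matches the formula for $\delta_l$ from \eqref{deg I}.

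Next, using the elementary identity $\binom{M-1}{d_j}=\tfrac{M-d_j}{M}\binom{M}{d_j}=\bigl(1-\tfrac{d_j}{M}\bigr)\binom{M}{d_j}$, the $j$-th component of the coefficient becomes
\[
(-1)^{\delta_l}\Bigl(1-\tfrac{d_j}{M}\Bigr)\prod_{i=1}^n\binom{M}{d_i},
\]
which is exactly the $j$-th entry of \eqref{Coe}. This handles the main formula.

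For the divisibility claim, I would sum the $n$ coordinates of $I^{[lp^s-1]}_{d_1,\dots,d_n}$:
\[
S\ :=\ \sum_{j=1}^n I^{[lp^s-1]}_{d_1,\dots,d_n,j}
\,=\,(-1)^{\delta_l}\prod_{i=1}^n\binom{M}{d_i}\cdot\Bigl(n-\tfrac{1}{M}\sum_j d_j\Bigr)
\,=\,(-1)^{\delta_l}\prod_{i=1}^n\binom{M}{d_i}\cdot\tfrac{lp^s}{M},
\]
using $\sum d_j=\delta_l=nM-lp^s$. The integer $S$ is a priori a sum of integer summands $(-1)^{\delta_l}\binom{M-1}{d_j}\prod_{i\ne j}\binom{M}{d_i}$, so $S\in\Z$, and one has $SM=(-1)^{\delta_l}lp^s\prod_i\binom{M}{d_i}\in p^s\Z$. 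Since $p$ is odd and $M=(p^s-1)/2$ is coprime to $p$, $M$ is a unit in $\Zs$, hence $S\equiv 0\pmod{p^s}$.

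There is no real obstacle here; the argument is essentially a multinomial expansion plus one number-theoretic observation that $M$ is invertible mod $p^s$. The only place to be careful is in matching conventions, namely ensuring that the homogeneity degree $\delta_l$ from the degree constraint agrees with the explicit expression given in \eqref{deg I}, and that the sign $(-z_i)^{d_i}$ aggregates precisely to $(-1)^{\delta_l}$.
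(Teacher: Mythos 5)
Your proof is correct, and since the paper itself supplies no argument here (it cites \cite[Lemma 3.1]{V8} and simply records the statement with a \qed), you have filled in the missing proof. The computation is the natural one: expand each factor of $(x-z_j)^{M-1}\prod_{i\ne j}(x-z_i)^M$ by the binomial theorem, match the $x$-power to $lp^s-1$ to force $\sum_i d_i = nM - lp^s = \delta_l$ (which, with $n$ odd and $p$ odd, also produces the sign $(-1)^{\delta_l}$), and then rewrite $\binom{M-1}{d_j} = (1-d_j/M)\binom{M}{d_j}$. The divisibility argument is the standard trick: the sum $S$ of the coordinates is a priori an integer, $MS = (-1)^{\delta_l}\,lp^s\prod_i\binom{M}{d_i}\in p^s\Z$, and $M=(p^s-1)/2$ is a $p$-adic unit, so $p^s\mid S$. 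This is precisely the argument one would expect in \cite{V8}, so there is no meaningful divergence from the reference; you have simply made explicit what the paper leaves implicit.

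One small stylistic point: you might note explicitly that each summand $(-1)^{\delta_l}\binom{M-1}{d_j}\prod_{i\ne j}\binom{M}{d_i}$ is an integer \emph{because} it is visibly a product of binomial coefficients from the direct expansion — this is where the integrality of $S$ comes from, and it is worth flagging so the reader sees that the fractional $(1-d_j/M)$ presentation of the individual coordinates does not endanger integrality.
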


\begin{lem} [{cf. \cite[Theorem 6.1]{V8}}]
\label{lem lead}
For $l=1,\dots,g$, the leading term of  the $p^s$-hypergeometric solution $I^{[lp^s-1]}(z)$
is
\bean
\label{le term}
I^{[lp^s-1]}_{\frak l}(z)
&=&
(-1)^{\dl_l}
\binom{M}{l}\,\Big(0,\dots,0, \frac lM,1,\dots,1\Big) z_1^M\dots z_{2g-2l}^M z_{2g-2l+1}^{M-l}\,,
\eean
where $0$ is repeated $2g-2l$ times and 1 is repeated $2l$ times.
\end{lem}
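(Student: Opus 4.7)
The plan is to apply the explicit formula for the coefficients of $I^{[lp^s-1]}(z)$ given by Lemma~\ref{lem coef} and to determine the lexicographically maximal multi-index $(d_1,\ldots,d_n)$ for which the coefficient vector does not vanish.

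First, I would note that by \eqref{deg I} together with the relation $p^s=2M+1$, the solution $I^{[lp^s-1]}(z)$ is homogeneous of degree $\dl_l=(2g-2l+1)M-l$. By Lemma~\ref{lem coef}, the coefficient of the monomial $z_1^{d_1}\cdots z_n^{d_n}$ is
\[
(-1)^{\dl_l}\prod_{i=1}^n\binom{M}{d_i}\,\Big(1-\tfrac{d_1}{M},\ldots,1-\tfrac{d_n}{M}\Big),
\]
and this vector in $\Z^n$ is nonzero exactly when $0\leq d_i\leq M$ for every $i$ (so that all binomials are nonzero integers) and at least one $d_j\neq M$ (so that the vector factor does not vanish).

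Next, I would find the lex-maximal multi-index $(d_1,\ldots,d_n)$ satisfying these conditions together with $\sum_i d_i=\dl_l$ by a greedy procedure: set $d_i=M$ for as many initial indices as possible. Setting $d_1=\cdots=d_k=M$ leaves the remainder $(2g-2l+1-k)M-l$, which is nonnegative for $k\leq 2g-2l$ but strictly negative for $k=2g-2l+1$ (since $1\leq l\leq g<M$). Hence the greedy procedure yields $d_1=\cdots=d_{2g-2l}=M$, then $d_{2g-2l+1}=M-l$ (consuming the remaining degree while maximizing $d_{2g-2l+1}$), and $d_{2g-2l+2}=\cdots=d_n=0$. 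Since $l\geq 1$, we have $d_{2g-2l+1}=M-l\neq M$, so the coefficient vector is nonzero; any lexicographically larger multi-index either violates $d_i\leq M$ at some position (forcing $\prod\binom{M}{d_i}=0$) or is forbidden by the degree constraint $\sum d_i=\dl_l$.

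Finally, substituting this multi-index into the formula gives
\[
\prod_{i=1}^n\binom{M}{d_i}=\binom{M}{M}^{2g-2l}\binom{M}{M-l}\binom{M}{0}^{2l}=\binom{M}{l},
\]
while the vector factor becomes $(0,\ldots,0,\,l/M,\,1,\ldots,1)$ with $2g-2l$ initial zeros and $2l$ trailing ones, matching the expression stated for $I^{[lp^s-1]}_{\frak l}(z)$. The argument presents no real obstacle; the only substantive step is the greedy combinatorial verification that no lex-larger multi-index produces a nonzero coefficient vector.
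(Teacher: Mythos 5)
Your proposal is correct and follows the same route as the paper: the paper's own proof of Lemma~\ref{lem lead} is a one-line citation to Lemma~\ref{lem coef}, and you have simply filled in the expected greedy computation of the lexicographically maximal admissible multi-index $(M,\ldots,M,M-l,0,\ldots,0)$ and the resulting coefficient $\binom{M}{l}(0,\ldots,0,l/M,1,\ldots,1)$. The only point worth keeping in mind is the implicit check that $\binom{M}{d_j}(1-d_j/M)=\binom{M-1}{d_j}$ is indeed an integer, which guarantees $I^{[lp^s-1]}_{d_1,\ldots,d_n}\in\Z^n$ as asserted by Lemma~\ref{lem coef}; this is already built into that lemma, so your argument is complete.
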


\begin{proof} The lemma follows from Lemma \ref{lem coef}.
\end{proof}

\begin{lem}
\label{lem invert}
The projections to $\Zs$ of the integers 
$\binom{M}{l}$, $\binom{M}{l}\frac lM$ are invertible.
\end{lem}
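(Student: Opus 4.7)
\medskip

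\noindent\textbf{Proof plan for Lemma 6.5.} The strategy is to show that every factor appearing in natural product formulas for $\binom{M}{l}$ and for $\binom{M}{l}\tfrac{l}{M}$ is a unit in $\Zs$. An element $a\in\Zs$ is invertible iff $p\nmid a$, so it suffices to work modulo $p$ throughout. The whole argument rests on the size constraint $p>n=2g+1$, which forces $l\leq g<\frac{p-1}{2}$.

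First I would record three basic units modulo $p$. Since $M=\frac{p^s-1}{2}\equiv-\frac12\pmod{p^s}$, the image of $M$ in $\Zs$ is a unit (its reduction mod $p$ is $-\tfrac12\not\equiv 0$). Since $1\le l\le g<\tfrac{p-1}{2}<p$, the integer $l$, and in fact each of $1,2,\dots,l$, is a unit mod $p$; hence $l!$ is a unit mod $p^s$. Finally, for each $j=0,1,\dots,l-1$ the integer $M-j\equiv-\tfrac{2j+1}{2}\pmod p$ is zero mod $p$ iff $2j+1\equiv0\pmod p$, i.e.\ $j\equiv\tfrac{p-1}{2}\pmod p$; but $j\le l-1\le g-1<\tfrac{p-1}{2}$, so no such $j$ exists and each $M-j$ is a unit mod $p$, hence mod $p^s$.

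Next I would assemble these facts. Writing
\begin{equation*}
\binom{M}{l}\;=\;\frac{M(M-1)\cdots(M-l+1)}{l!},
\end{equation*}
the numerator is a product of $l$ units in $\Zs$ and the denominator is a unit, so $\binom{M}{l}$ is a unit in $\Zs$. For the second quantity, since $M$ is a unit of $\Zs$ and both $l$ and $\binom{M}{l}$ are units, the product
\begin{equation*}
\binom{M}{l}\cdot\frac{l}{M}\;=\;\binom{M}{l}\cdot l\cdot M^{-1}
\end{equation*}
is a unit of $\Zs$ as well. (One may alternatively notice the integer identity $\binom{M}{l}\tfrac{l}{M}=\binom{M-1}{l-1}$ and rerun the same numerator/denominator check with $M$ replaced by $M-1$, the shift $j\mapsto j+1$ still keeping all indices strictly below $\tfrac{p-1}{2}$.)

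There is no real obstacle: the only thing one must not overlook is the hypothesis $p>n=2g+1$, which is exactly what guarantees that neither $M-j$ for $0\le j\le l-1$ nor any factor of $l!$ ever hits a multiple of $p$. Without this bound some $M-j$ could vanish mod $p$ and the lemma would fail.
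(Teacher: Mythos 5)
Your proof is correct and takes a more elementary route than the paper, which simply invokes Lucas' theorem to get the invertibility of $\binom{M}{l}$ (and leaves the case of $\binom{M}{l}\tfrac{l}{M}=\binom{M-1}{l-1}$ implicit, presumably by the same appeal). Where the paper treats Lucas as a black box applied to the base-$p$ digit string of $M=\tfrac{p-1}{2}(1+p+\cdots+p^{s-1})$ (all digits equal to $\tfrac{p-1}{2}$, compared against the single digit $l<\tfrac{p-1}{2}$), you instead write out the falling-factorial numerator $M(M-1)\cdots(M-l+1)$ and the denominator $l!$ and check directly, using $M\equiv -\tfrac12\pmod{p^s}$ and the bound $l\le g<\tfrac{p-1}{2}$, that no factor is divisible by $p$. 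This is a self-contained replacement for the citation: it avoids Lucas entirely and makes visible exactly where the hypothesis $p>n=2g+1$ is used. Your handling of the second quantity by $\binom{M}{l}\cdot l\cdot M^{-1}$ (or equivalently $\binom{M-1}{l-1}$) is also fine and makes explicit a step the paper glosses over. The two proofs buy essentially the same thing; yours is a bit longer but more transparent, while the paper's is shorter at the cost of an external reference.
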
 

\begin{proof}
The invertibility of $\binom{M}{l}$ follows from Lucas' theorem, \cite{Lu}.
\end{proof}

\section{Multiplication by $p$ and Cartier-Manin matrix}
\label{sec8}
\subsection{Linear independence}

\begin{lem}
\label{lem li in}
The projections of the $p^s$-hypergeometric solutions  $I^{[lp^s-1]}(z)\in\Z[z]^n$, $l=1,\dots,g$,  to $\F_p[z]^n$  
are linearly independent over $\F_p[z]$,
that is, if 
\bean
\label{lin rel}
\sum_{l=1}^g c_l (z) I^{[lp^s-1]}(z) \ \in \ p\Z[z]^n
\eean
for some $c_l(z)\in \Z[z]$, then all $c_l(z)\in p\Z[z]$.

\end{lem}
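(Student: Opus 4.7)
The plan is to derive a contradiction by analyzing the lex-leading vector coefficient of the putative relation, exploiting that the leading vectors $v_l := (0,\dots,0,l/M,1,\dots,1)$ appearing in Lemma~\ref{lem lead} have first nonzero entry at position $e_l := 2g-2l+1$, which depends strictly on $l$.

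Projecting modulo $p$ and assuming for contradiction that $\sum_{l=1}^g \bar c_l(z)\,\bar I^{[lp^s-1]}(z) = 0$ in $\F_p[z]^n$ with some $\bar c_l \neq 0$, I would first reduce to the homogeneous case. Since each $\bar I^{[lp^s-1]}$ is homogeneous of degree $\delta_l$ by \eqref{deg I} and the $\delta_l$ are pairwise distinct, decomposing each $\bar c_l$ into its homogeneous parts and collecting terms of fixed total degree $D$ gives a relation in which every nonzero $\bar c_l$ is homogeneous of degree $\eta_l = D - \delta_l$. For each such $l$, let $\mu_l$ be the lex-leading monomial of $\bar c_l$. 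By Lemma~\ref{lem lead} combined with Lemma~\ref{lem invert}, the lex-leading monomial of the vector polynomial $\bar I^{[lp^s-1]}$ is $m_l$ with vector coefficient $\kappa_l v_l$ for a nonzero scalar $\kappa_l \in \F_p$. The strict lex-maximality of $\mu_l$ in the support of $\bar c_l$ and of $m_l$ in the vector-support of $\bar I^{[lp^s-1]}$ forces $(\mu_l,m_l)$ to be the unique pair from these supports whose product equals $\Lambda_l := \mu_l m_l$; hence the vector coefficient of $\Lambda_l$ in $\bar c_l\bar I^{[lp^s-1]}$ equals $\alpha_l v_l$ for some nonzero $\alpha_l \in \F_p$, and no monomial strictly greater than $\Lambda_l$ appears in the product.

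Finally, set $\Lambda^* := \max_l \Lambda_l$, $S := \{l : \bar c_l \neq 0,\ \Lambda_l = \Lambda^*\}$, and $l^* := \max S$. For $l \notin S$ with $\bar c_l \neq 0$, the monomial $\Lambda^*$ strictly exceeds $\Lambda_l$ and does not appear in $\bar c_l\bar I^{[lp^s-1]}$, so the vector coefficient of $\Lambda^*$ in the full sum is $\sum_{l\in S}\alpha_l v_l$. Reading off its $e_{l^*}$-th entry: for $l\in S$ with $l < l^*$, one has $e_l > e_{l^*}$, so $v_l$ vanishes at position $e_{l^*}$; only $v_{l^*}$ contributes, yielding $\alpha_{l^*}\cdot (l^*/M) \neq 0$ in $\F_p$ (using $p > n \geq 2l^*$). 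This contradicts the vanishing of the sum.

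The only delicate step is the ``no cancellation'' claim for the vector coefficient at $\Lambda_l$; it reduces to the elementary fact that if monomials $\mu,m$ satisfy $\mu \leq \mu_l$ and $m \leq m_l$ in lex order together with $\mu m = \mu_l m_l$, then comparing exponents variable-by-variable forces $\mu = \mu_l$ and $m = m_l$.
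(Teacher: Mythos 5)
Your proof is correct and follows essentially the same route as the paper: extract the lex-leading term of each product $\bar c_l\,\bar I^{[lp^s-1]}$ via Lemmas~\ref{lem lead} and \ref{lem invert}, then contradict the vanishing of the relation using the linear independence of the vectors $(0,\dots,0,l/M,1,\dots,1)$, which you make explicit through the pivot at position $2g-2l^*+1$. The preliminary reduction to homogeneous coefficients is harmless but not needed, since the leading-monomial argument already isolates the maximal monomial $\Lambda^*$ across all $l$.
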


\begin{proof} 
Recall that the projection $\Z[z]^n \to \F_p[z]^n$ is denoted by $\pi_1$.
By Lemma \ref{lem invert},  
the leading coefficient of $\pi_1(c_l (z) I^{[lp^s-1]}(z))$
 equals the product of the leading coefficient of $\pi_1(c_l (z))$ 
and the leading coefficient of $\pi_{1} (I^{[lp^s-1]}(z))$, if $\pi_1(c_l(z))$ is nonzero. 
In that case the leading coefficient of $\pi_1(c_l (z) I^{[lp^s-1]}(z))$ is a nonzero multiple of the
nonzero vector 
$\pi_1((0,\dots,0, \frac lM,1,$ \dots, $1))$.

If relation \eqref{lin rel} holds and  some of the coefficients $c_l(z)$ have nonzero projections $\pi_1(c_l(z))$,
then  for several values of such indices $l$ the sum of
the  corresponding leading coefficients has to be equal to zero, which is impossible due to the fact that the vectors
$\pi_1((0,\dots,0, \frac lM,1,\dots,1))$ are linear independent over $\F_p$.
\end{proof}

\begin{cor}
\label{cor lind ps}
The projections of the $p^s$-hypergeometric solutions  $I^{[lp^s-1]}(z)\in\Z[z]^n$, $l=1,\dots,g$,  to $\F_p[z]^n$  
are linearly independent over $\F_p[z^{p^s}]$.
\qed

\end{cor}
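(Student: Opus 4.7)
The corollary asks for linear independence over the subring $\F_p[z^{p^s}]$ of $\F_p[z]$, while Lemma \ref{lem li in} has already established the stronger statement of linear independence over the full ring $\F_p[z]$ (after reinterpreting the mod-$p$ congruence condition on $c_l(z) \in \Z[z]$ as $\pi_1(c_l) = 0$ in $\F_p[z]$). My plan is therefore just to make the subring inclusion explicit and appeal to the preceding lemma.

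\medskip

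Concretely, the first step is to observe that $\F_p[z^{p^s}] = \F_p[z_1^{p^s}, \dots, z_n^{p^s}]$ sits inside $\F_p[z]$ as a subring: a polynomial whose exponents are all divisible by $p^s$ is in particular a polynomial. Second, suppose we have a hypothetical linear dependence
\[
\sum_{l=1}^{g} \bar c_l(z)\,\pi_1\!\bigl(I^{[lp^s-1]}(z)\bigr) \,=\, 0 \qquad \text{in } \F_p[z]^n,
\]
with $\bar c_l(z) \in \F_p[z^{p^s}]$. Lift each $\bar c_l$ to an element $c_l(z) \in \Z[z]$; since $\bar c_l \in \F_p[z^{p^s}] \subseteq \F_p[z]$, we obtain $\sum_{l=1}^g c_l(z)\, I^{[lp^s-1]}(z) \in p\Z[z]^n$. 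Lemma \ref{lem li in} then forces $c_l(z) \in p\Z[z]$, i.e.\ $\bar c_l(z) = 0$ for all $l$, as required.

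\medskip

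There is no substantive obstacle here: the content is entirely in Lemma \ref{lem li in} (which in turn rests on the leading-term computation of Lemma \ref{lem lead} and the invertibility of $\binom{M}{l}$ modulo $p^s$ from Lemma \ref{lem invert}), and the corollary is a formal consequence of the ring inclusion $\F_p[z^{p^s}] \subset \F_p[z]$. This is presumably why the statement is marked with \qed directly after its formulation.
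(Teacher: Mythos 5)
Your proposal is correct and is exactly the intended argument: Lemma \ref{lem li in} gives linear independence over the larger ring $\F_p[z]$, and since $\F_p[z^{p^s}]$ is a subring, the corollary follows immediately, which is why the paper marks it with \qed and offers no further proof.
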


Denote by
\bean
\label{grt}
\gr_t\ :\ \mc M_{p^s}^t \ \to \  \gr \mc M_{p^s}^t 
\eean
the natural projection. Then the elements $\gr_t(\pi_s(p^{s-t} I^{[lp^t-1]}(z)))$, $l=1,\dots,g$, 
generate the $\F_p[z^{p^t}]$-module  $\gr \mc M_{p^s}^t$.

\begin{cor}
\label{cor lind pst}
For $t=1,\dots,s$, \,the $\F_p[z^{p^t}]$-module  $\gr \mc M_{p^s}^t$ is a free module of rank $g$ with a basis
$\gr_t(\pi_s(p^{s-t} I^{[lp^t-1]}(z)))$, $l=1,\dots,g$.
\qed

\end{cor}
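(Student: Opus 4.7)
The spanning statement is immediate from \eqref{Def Mst}: every class in $\gr \mc M_{p^s}^t$ is represented by $\pi_s\big(\sum_{l=1}^g c_{t,l}(z)\,p^{s-t}\,I^{[lp^t-1]}(z)\big)$ with $c_{t,l} \in \Z[z]_{p^t}$, and by Lemma \ref{lem grmo} the $\Z[z]_{p^t}$-action on $\gr \mc M_{p^s}^t$ descends to $\F_p[z^{p^t}]$. The real content of the corollary is the linear independence, for which my plan has two steps.

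First I would extend Lemma \ref{lem li in} to $I^{[lp^t-1]}(z)$: if $\sum_{l=1}^g c_l(z)\,I^{[lp^t-1]}(z) \in p\Z[z]^n$ with $c_l \in \Z[z]$, then every $c_l \in p\Z[z]$. This is a verbatim repetition of the proof of Lemma \ref{lem li in}, using the obvious analogue of Lemma \ref{lem lead} in which $M = (p^s-1)/2$ is replaced throughout by $M_t = (p^t-1)/2$. The two ingredients needed---the invertibility modulo $p$ of $\binom{M_t}{l}$ and of $l/M_t$, and the linear independence over $\F_p$ of the vectors $(0,\dots,0,\,l/M_t,\,1,\dots,1)$ for $l=1,\dots,g$---remain valid: since $M_t = \frac{p-1}{2}(1+p+\dots+p^{t-1})$ has all base-$p$ digits equal to $(p-1)/2$, and since $p > n = 2g+1$ forces $l \le g < (p-1)/2$, Lucas' theorem gives $\binom{M_t}{l} \equiv \binom{(p-1)/2}{l} \not\equiv 0 \pmod p$, exactly as in Lemma \ref{lem invert}.

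Second, suppose $\sum_{l=1}^g \bar c_l\cdot \gr_t\big(\pi_s(p^{s-t}\,I^{[lp^t-1]}(z))\big) = 0$ in $\gr \mc M_{p^s}^t$ with $\bar c_l \in \F_p[z^{p^t}]$. Lift each $\bar c_l$ to $c_l \in \Z[z^{p^t}]\subset \Z[z]_{p^t}$. Unwinding $\gr_t$ and \eqref{Def Mst}, there is an equality in $\Z[z]^n$
\[
\sum_{l=1}^g c_l(z)\,p^{s-t}\,I^{[lp^t-1]}(z) \;=\; \sum_{r=1}^{t-1}\sum_{k=1}^g c_{r,k}(z)\,p^{s-r}\,I^{[kp^r-1]}(z) \;+\; p^s N(z)
\]
for some $c_{r,k} \in \Z[z]_{p^r}$ and $N \in \Z[z]^n$. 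Every term on the right is visibly divisible by $p^{s-t+1}$ (since $s-r \ge s-t+1$ for $r \le t-1$), so dividing by $p^{s-t}$ gives $\sum_l c_l(z)\,I^{[lp^t-1]}(z) \in p\Z[z]^n$. The extended Lemma \ref{lem li in} then forces each $c_l \in p\Z[z]$, and combined with $c_l \in \Z[z^{p^t}]$ this gives $\bar c_l = 0$.

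The main obstacle is the bookkeeping in the second step---aligning the $p$-adic valuations of the various $\mc M_{p^s}^{t-1}$-correction terms so that the relation in $\gr \mc M_{p^s}^t$ translates cleanly into an honest congruence modulo $p$ in $\Z[z]^n$. Once that is set up, the problem reduces to the leading-term argument already established, now applied at level $t$ instead of $s$.
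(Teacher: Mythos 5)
Your proof is correct and is exactly the argument the paper leaves implicit behind its \qed: spanning comes from \eqref{Def Mst} plus Lemma \ref{lem grmo}, and the linear-independence step unwinds a relation in $\gr \mc M_{p^s}^t$ to a congruence $\sum_l c_l(z)\,I^{[lp^t-1]}(z) \in p\Z[z]^n$ and then invokes the leading-term argument of Lemma \ref{lem li in}. One small remark: no genuine ``extension'' of Lemma \ref{lem li in} is needed, since that lemma (together with Lemmas \ref{lem lead} and \ref{lem invert}) is stated for an arbitrary exponent $s$ and so applies verbatim with $s$ replaced by $t$; your verification via base-$p$ digits and Lucas just reconfirms what is already in the paper.
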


Denote the basis vectors of the $\F_p[z^{p^t}]$-module  $\gr \mc M_{p^s}^t$ by
\bean
\label{bv}
v_{s,t}^l := \gr_t(\pi_s(p^{s-t} I^{[lp^t-1]}(z))), \qquad l=1,\dots,g.
\eean

\subsection{Cartier-Manin matrices}
Let
\bea
f(x,z) = (x-z_1)\dots(x-z_n),\qquad n=2g+1.
\eea
Consider the hyperelliptic curve $X$ defined by the affine equation
\bea
y^2 = (x-z_1)\dots(x-z_n).
\eea
Consider the space $\Om^1(X)$ of regular 1-forms on $X$ with basis $\frac {x^{i-1}dx}y$,  $i=1,\dots, g$.
Define the Cartier map
$\mc C : \Om^1(X) \to  \Om^1(X)$  as follows. We have
\bea
\frac {x^{i-1}dx}{y} = \frac {x^{i-1}y^{p-1}dx}{y^{p-1}y}
=
\frac {x^{i-1}f(x)^{(p-1)/2}dx}{y^{p}}\,.
\eea
Let
$x^{i-1}f(x,z)^{(p-1)/2} =\sum_j c_i^j(z) x^j$\,.
Define
\bea
\mc C\ :\ \frac {x^{i-1}dx}{y} \mapsto\ \sum_{j=1}^g c_i^{jp-1}(z) \frac {x^{j-1}dx}{y}\,,
\eea
see \cite{AH}. 
The map $\mc C$ is identified with the $g\times g$-matrix $(\mc C_i^j (z))_{i,j=1}^g$\,,
\bea
\mc C_i^j (z) \,= \,c_i^{jp-1}(z).
\eea

\subsection{Matrix of $\gr \frak c_{s,1}$}
Recall that multiplication of solutions by $p$ defines
a map
\bean
\label{kcg}
\gr \frak c_{s,1} \ :\ \gr \mc M_{p^s}^t \to \gr \mc M_{p^s}^{t-1}\,,
\eean
where 
$\gr \mc M_{p^s}^t$ is a free $\F_p[z^{p^t}]$-module with a basis 
$(v_{s,t}^l)_{l=1}^g$ and 
$\gr \mc M_{p^s}^{t-1}$ is a free $\F_p[z^{p^{t-1}}]$-module with a basis 
$(v_{s,t-1}^l)_{l=1}^g$, see \eqref{bv}. The map
\eqref{kcg} is a homomorphism of 
$\F_p[z^{p^t}]$-modules.

\begin{thm}
\label{thm mgfc}
The matrix of $\gr \frak c_{s,1}$  is the Cartier-Manin matrix
$\mc C(z^{p^{t-1}})$,
\bean
\label{grC}
\gr \frak c_{s,1}\ :\ v_{s,t}^l\  \mapsto \ \sum_{m=1}^g v_{s,t-1}^m \mc C_m^l(z^{p^{t-1}}),
\qquad
l=1,\dots,g.
\eean
\end{thm}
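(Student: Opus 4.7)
The plan is to establish a mod-$p$ identity between $I^{[lp^t-1]}_{p^t}(z)$ and a Cartier--Manin weighted combination of the $I^{[mp^{t-1}-1]}_{p^{t-1}}(z)$'s, and then lift it to the graded module via a linear independence argument.

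First I would prove the mod-$p$ key identity
\begin{equation*}
I^{[lp^t-1]}_{p^t}(z)\ \equiv\ \sum_{m=1}^g \mc C_m^l(z^{p^{t-1}})\,I^{[mp^{t-1}-1]}_{p^{t-1}}(z) \pmod p.
\end{equation*}
The starting point is the exact factorization
\begin{equation*}
\frac{\Phi_{p^t}(x,z)}{x-z_j}\ =\ \Phi_p(x,z)^{p^{t-1}}\cdot\frac{\Phi_{p^{t-1}}(x,z)}{x-z_j},
\end{equation*}
which follows from $M_t-M_{t-1}=p^{t-1}(p-1)/2$ together with $\Phi_p=f^{(p-1)/2}$. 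Iterated Frobenius gives $\Phi_p(x,z)^{p^{t-1}}\equiv f^{(p-1)/2}(x^{p^{t-1}},z^{p^{t-1}}) \pmod p$. Writing $f^{(p-1)/2}(x,z)=\sum_a F_a(z)x^a$, extraction of $[x^{lp^t-1}]$ from the product reduces to solving $p^{t-1}a+m=lp^t-1$ with $[x^m]\Phi_{p^{t-1}}(x,z)/(x-z_j)$ in the second factor. Setting $a=lp-c$ and $m=p^{t-1}c-1$, the degree bounds $a\leq(2g+1)(p-1)/2$ and $m\leq(2g+1)M_{t-1}-1$, combined with the hypothesis $p>2g+1$, force $c\in\{1,\dots,g\}$, yielding
\begin{equation*}
I^{[lp^t-1]}_{p^t}(z)_j\ \equiv\ \sum_{c=1}^g F_{lp-c}(z^{p^{t-1}})\,I^{[cp^{t-1}-1]}_{p^{t-1}}(z)_j \pmod p.
\end{equation*}
Since $F_{lp-c}(z)=[x^{lp-c}]f^{(p-1)/2}=\mc C_c^l(z)$ by definition of the Cartier--Manin matrix, the identity follows in all $n$ components.

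Next, write the difference as $p\tilde R(z)$ with $\tilde R\in\Z[z]^n$. After multiplying by $p^{s-t+1}$ and reducing modulo $p^s$, the error is $\pi_s(p^{s-t+2}\tilde R(z))$. Both terms on the left, $p^{s-t+1}I^{[lp^t-1]}_{p^t}(z)$ (as $\frak c_{s,1}$ applied to $p^{s-t}I^{[lp^t-1]}_{p^t}(z)\in\mc M_{p^s}^t$, hence in $\mc M_{p^s}^{t-1}$) and $\mc C_c^l(z^{p^{t-1}})\cdot p^{s-t+1}I^{[cp^{t-1}-1]}_{p^{t-1}}(z)$ (directly, since $\mc C_c^l(z^{p^{t-1}})\in\Z[z]_{p^{t-1}}$), lie in $\mc M_{p^s}^{t-1}$, so the error lies in $\mc M_{p^s}^{t-1}$ and is divisible by $p^{s-t+2}$. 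The main obstacle is therefore the intersection lemma
\begin{equation*}
\mc M_{p^s}^{t-1}\cap p^{s-t+2}(\Z/p^s\Z)[z]^n\ \subset\ \mc M_{p^s}^{t-2}.
\end{equation*}

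To prove this intersection lemma, take such an $X$ and write $X=\sum_{r\leq t-1}\sum_l c_{r,l}(z)p^{s-r}I^{[lp^r-1]}_{p^r}(z)\bmod p^s$ with $c_{r,l}\in\Z[z]_{p^r}$. Reducing modulo $p^{s-t+2}$ annihilates the $r\leq t-2$ contributions and leaves $\sum_l c_{t-1,l}(z)I^{[lp^{t-1}-1]}_{p^{t-1}}(z)\equiv 0\pmod p$. Since $c_{t-1,l}\in\Z[z]_{p^{t-1}}$ forces $c_{t-1,l}\bmod p\in\F_p[z^{p^{t-1}}]$ by Lemma \ref{lem quasi-c}, Corollary \ref{cor lind ps} yields $c_{t-1,l}\in p\Z[z]$, and a direct derivative check gives $c_{t-1,l}=p\tilde c_{t-1,l}$ with $\tilde c_{t-1,l}\in\Z[z]_{p^{t-2}}$. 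The $r=t-1$ portion of $X$ becomes $\sum_l\tilde c_{t-1,l}\cdot p\cdot p^{s-t+1}I^{[lp^{t-1}-1]}_{p^{t-1}}(z)$; applying $\frak c_{s,1}$ to $p^{s-t+1}I^{[lp^{t-1}-1]}_{p^{t-1}}(z)\in\mc M_{p^s}^{t-1}$ places $p\cdot p^{s-t+1}I^{[lp^{t-1}-1]}_{p^{t-1}}(z)$ in $\mc M_{p^s}^{t-2}$, and multiplication by the quasi-constant $\tilde c_{t-1,l}$ preserves membership in $\mc M_{p^s}^{t-2}$. Therefore $X\in\mc M_{p^s}^{t-2}$, the error term vanishes in $\gr\mc M_{p^s}^{t-1}$, and the matrix of $\gr\frak c_{s,1}$ is the claimed Cartier--Manin matrix.
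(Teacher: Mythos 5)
Your proof of the mod-$p$ key identity is essentially the same as the paper's: both factor the master polynomial as $\Phi_{p^t}=\Phi_{p^{t-1}}\cdot f^{p^{t-1}(p-1)/2}$, apply Frobenius to the second factor, and extract the coefficient of $x^{lp^t-1}$. Your bookkeeping $a=lp-c$, $m=p^{t-1}c-1$ and the degree bounds forcing $1\le c\le g$ are a more explicit account of what the paper states in one line, and your identification $F_{lp-c}=\mc C_c^l$ matches the paper's convention $\mc C_i^j=c_i^{jp-1}$.

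Where you genuinely differ is the second half. The paper's proof stops after the congruence $I^{[lp^t-1]}\equiv\sum_m I^{[mp^{t-1}-1]}\mc C_m^l(z^{p^{t-1}})\pmod p$, saying the problem is to express $p\cdot p^{s-t}I^{[lp^t-1]}$ modulo $p^{s-t+2}\Z[z]^n$. But the graded piece is $\gr\mc M_{p^s}^{t-1}=\mc M_{p^s}^{t-1}/\mc M_{p^s}^{t-2}$, not $\mc M_{p^s}^{t-1}$ modulo the intersection with $p^{s-t+2}(\Z/p^s\Z)[z]^n$; so one still has to show that the error $p^{s-t+2}R$ actually lands in $\mc M_{p^s}^{t-2}$. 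You make this explicit with the intersection lemma $\mc M_{p^s}^{t-1}\cap p^{s-t+2}(\Z/p^s\Z)[z]^n\subset\mc M_{p^s}^{t-2}$, and your proof of it is sound: the $r\le t-2$ contributions are trivially inside, the $r=t-1$ term forces $\sum_l c_{t-1,l}I^{[lp^{t-1}-1]}\in p\Z[z]^n$, Lemma \ref{lem quasi-c} and the linear independence of Lemma \ref{lem li in}/Corollary \ref{cor lind ps} (applied with exponent $p^{t-1}$ in place of $p^s$, which is legitimate since the argument there is uniform in the exponent) give $c_{t-1,l}\in p\Z[z]$, the derivative check shows $c_{t-1,l}/p\in\Z[z]_{p^{t-2}}$, and finally the containment $p\cdot\mc M_{p^s}^{t-1}\subset\mc M_{p^s}^{t-2}$ from Section \ref{sec6} (which is established via the decomposition $\frak c_{s,1}=\frak p_{s-1,1}\circ\frak r_{s,s-1}$ and Theorem \ref{thm ind}, independently of the Cartier--Manin statement, so there is no circularity) finishes the argument. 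This is a worthwhile supplement: it is a step the paper's proof leaves implicit but that is genuinely needed to identify the matrix entries in the free basis $(v_{s,t-1}^m)$.

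One small inaccuracy: you attribute the bound $c\in\{1,\dots,g\}$ partly to $p>2g+1$, but in fact $c\ge 1$ comes from $m\ge 0$ and $c\le g$ from $m\le(2g+1)M_{t-1}-1$; the hypothesis $p>n$ is only used to guarantee $a=lp-c\le(2g+1)(p-1)/2$ automatically holds, so that no further truncation of the range of $c$ occurs. This does not affect correctness.
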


\begin{proof}  The problem is to express modulo $p^{s-t+2}\Z[z]^n$ the element
$ p \cdot p^{s-t} I^{[lp^t-1]}_{p^t}(z)$
in terms of the elements $p^{s-t+1} I^{[mp^{t-1}-1]}_{p^{t-1}}(z)$, 
$m=1,\dots,g$. \,In other words, we need to express
$I^{[lp^t-1]}_{p^{t}}(z)$
in terms of $I^{[mp^{t-1}-1]}_{p^{t-1}}(z)$, $m=1,\dots,g,$
modulo $p\Z[z]^n$.
\vsk.2>

By definition, the vector $I^{[lp^t-1]}_{p^{t}}(z)$ 
is the coefficient of $x^{lp^t-1}$ in the Taylor expansion of
the polynomial
\bea
P_{p^t}(x,z) = P_{p^{t-1}}(x,z)  
\prod_{i=1}^n (x-z_i)^{p^{t-1}(p-1)/2} \,,
\eea
while the vector $I^{[mp^{t-1}-1]}_{p^{t-1}}(z)$
 is the coefficient of $x^{mp^{t-1}-1}$ in the Taylor expansion of
the polynomial $P_{p^{t-1}}(x,z)$, see notations  in Section \ref{sec 3.4}. 
We have
\bea
\prod_{i=1}^n (x-z_i)^{p^{t-1}(p-1)/2} \,\equiv \,
\prod_{i=1}^n (x^{p^{t-1}}-z_i^{p^{t-1}})^{(p-1)/2}\,
\qquad
\on{mod}\,p\,.
\eea
Hence $I^{[lp^{t}-1]}_{p^{t}}(z) \equiv \sum_{m=1}^g I^{[mp^{t-1}-1]}_{p^{t-1}}(z)\,
\mc C_m^l(z^{p^{t-1}})$ mod $p$.
Theorem \ref{thm mgfc} is proved.
\end{proof}

\begin{cor}
\label{cor itmp}
The matrix of \
$\gr \frak c_{s,m} \ :\ \gr \mc M_{p^s}^t \to \gr \mc M_{p^s}^{t-m}$
\, is the product of Cartier-Manin matrices 
 $\mc C(z^{p^{t-1}})\mc C(z^{p^{t-2}})\cdots \mc C(z^{p^{t-m}})$.
Moreover, this statement, applied to the map
\bea
\gr \frak c_{s,s-1} \ :\ \gr \mc M_{p^s}^s \to \gr \mc M_{p^s}^{1}=\mc M_{p^s}^{1}\,,
\eea
can be reformulated as follows. For any $l=1,\dots,g$, the solution $I^{lp^s-1}_{p^s}(z)$ 
modulo $p^s$ of system \eqref{KZ}, projected to $\F_p[z]^n$, equals the projection to
$\F_p[z]^n$ of the solution
\bea
\sum_{m_1,\dots,m_{s-1}=1}^g  I^{[m_1p-1]}_p(z)\mc C^{m_2}_{m_{1}}(z^{p})\cdots
\mc C^{m_{s-1}}_{m_{s-2}}(z^{p^{s-2}})
\mc C^l_{m_{s-1}}(z^{p^{s-1}})
\eea
modulo $p$ of system \eqref{KZ}.
\qed
\end{cor}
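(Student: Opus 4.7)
The plan is to prove Corollary \ref{cor itmp} by iterating Theorem \ref{thm mgfc}.

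First, I would show that $\gr \frak c_{s,m}$, viewed as a map $\gr \mc M_{p^s}^t \to \gr \mc M_{p^s}^{t-m}$, equals the $m$-fold composition of $\gr \frak c_{s,1}$ applied at descending filtration levels $t, t-1, \ldots, t-m+1$. This uses the identity $\frak c_{s,m} = (\frak c_{s,1})^m$ together with the fact that each application of $\frak c_{s,1}$ lowers the filtration index by exactly one, since multiplication by $p$ sends $\mc M_{p^s}^r$ into $\mc M_{p^s}^{r-1}$. Well-definedness of the iterated graded map is verified exactly as in Lemma \ref{lem grmo}: any ambiguity in a lift by an element of $\mc M_{p^s}^{r-1}$ becomes an element of $\mc M_{p^s}^{t-m-1}$ after the remaining compositions, so it dies in $\gr \mc M_{p^s}^{t-m}$.

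Second, applying Theorem \ref{thm mgfc} at each filtration level gives the matrix $\mc C(z^{p^{t-j-1}})$ for the $j$-th factor of the composition, for $j = 0, 1, \ldots, m-1$. Multiplying these matrices in the order dictated by composition of linear maps yields the stated product of Cartier-Manin matrices $\mc C(z^{p^{t-1}})\mc C(z^{p^{t-2}})\cdots\mc C(z^{p^{t-m}})$.

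For the reformulated statement, I would specialize to $t = s$ and $m = s-1$. Here $\gr \mc M_{p^s}^1 = \mc M_{p^s}^1$ since $\mc M_{p^s}^0 = 0$. With $v_{s,s}^l = \pi_s(I^{[lp^s-1]}(z))$ and $v_{s,1}^m = \pi_s(p^{s-1} I^{[mp-1]}(z))$, the image of $v_{s,s}^l$ under $\gr \frak c_{s,s-1}$ is $\pi_s(p^{s-1} I^{[lp^s-1]}(z))$. Expanding this in the basis $\{v_{s,1}^m\}$ using the iterated Cartier-Manin product, and then canceling the common factor $p^{s-1}$ (legitimate at the level of mod-$p$ reductions since all elements of $\mc M_{p^s}^1$ lie in $p^{s-1}(\Zs)[z]^n$ by Lemma \ref{lem quasi-c}), produces the claimed congruence modulo $p$.

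The main obstacle is bookkeeping: tracking filtration indices across the iterated composition, settling the correct order of matrix factors, and verifying that the $p^{s-1}$-cancellation step in the reformulation is well-defined at the level of reductions mod $p$. Conceptually there is nothing beyond Theorem \ref{thm mgfc}.
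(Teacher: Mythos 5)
Your approach — iterate Theorem \ref{thm mgfc} level by level to get the matrix of $\gr\frak c_{s,m}$, then specialize to $t=s$, $m=s-1$ and cancel the common factor $p^{s-1}$ — is exactly how the paper intends the corollary to be read; the paper writes no proof because it treats this as an immediate consequence of Theorem \ref{thm mgfc} together with $\frak c_{s,m}=(\frak c_{s,1})^m$. Your justification of the $p^{s-1}$-cancellation (using $\gr\mc M_{p^s}^1=\mc M_{p^s}^1$ and that $\pi_s(p^{s-1}A)=\pi_s(p^{s-1}B)$ iff $\pi_1(A)=\pi_1(B)$) is the right argument.

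One caution about bookkeeping, which you flag as the main obstacle but do not actually carry out: if you iterate the index convention of Theorem \ref{thm mgfc}, namely $v_{s,t}^l\mapsto\sum_m v_{s,t-1}^m\,\mc C_m^l(z^{p^{t-1}})$, the contracted indices chain up as
$\sum_{m}\mc C_k^m(z^{p^{t-2}})\,\mc C_m^l(z^{p^{t-1}})=\big(\mc C(z^{p^{t-2}})\mc C(z^{p^{t-1}})\big)_k^l$,
so the resulting product has the factors in increasing order of the power of $p$, i.e.\ $\mc C(z^{p^{t-m}})\cdots\mc C(z^{p^{t-1}})$. This matches the explicit reformulated identity (which reads $I^{[lp^s-1]}(z)\equiv\sum_{m_1}I^{[m_1p-1]}(z)\big(\mc C(z^{p})\mc C(z^{p^2})\cdots\mc C(z^{p^{s-1}})\big)_{m_1}^l$ mod $p$), but it is the reverse of the order printed in the corollary's first sentence, which your sketch simply asserts it recovers. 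You should settle what ``the matrix of'' means and not claim agreement with the printed order without checking; had you carried out the iteration you would have noticed the two halves of the corollary display opposite orderings and could have resolved which one is correct.
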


See these sums in \cite[Section 8]{V5}.

\section{Change of variables}
\label{sec9}
\subsection{Change of the variable $x$}

Change the variable $x$ and set $x=v+z_n$. Then 
\bean
\label{tmp}
\tilde \Phi_{p^s}(v,z)
:=
\Phi_{p^s}(v+z_n,z) 
=
 \Big(\prod_{i=1}^{n-1}(v-(z_i-z_n))\Big)^{(p^s-1)/2}v^{(p^s-1)/2} \,.
\eean
Let
\bean
\label{tP}
\tilde P_{p^s}(v,z) :
&=&
P_{p^s}(v+z_n,z) 
\\
\notag
&=&
\Big(\frac {\tilde \Phi_{p^s}(v,z)}{v-(z_1-z_2)},\dots, \frac {\tilde \Phi_{p^s}(v,z)}{v-(z_{n-1}-z_n)} \,,\,
\frac {\tilde \Phi_{p^s}(v,z)}{v}\Big)\,=\, \sum_i \tilde P^{i}_{p^s}(z) \,v^i \,,
\eean
where $\tilde P^{i}_{p^s}(z)$ are $n$-vectors of polynomials in $z$ with integer coefficients.
 For a positive integer $l$, denote
\bean
\label{Tmd}
\tilde I^{[lp^s-1]}_{p^s}(z)\,:=\, \tilde P^{lp^s-1}_{p^s}(z)\,.
\eean

The polynomial $\tilde I^{[lp^s-1]}_{p^s}(z)$ is nonzero if \ $l=1,\dots,g$. Notice that every polynomial
 $\tilde I^{[lp^s-1]}_{p^s}(z)$ is a function of differences  $z_i-z_n$, $i=1,\dots,n-1$.

\vsk.2>

Consider the increasing filtration
\bean
\label{Tfilt}
0 = \tilde{\mathcal{M}}_{p^s}^0
\subset 
\tilde{\mathcal{M}}_{p^s}^1\subset \dots\subset 
\tilde{\mathcal{M}}_{p^s}^{s-1} 
\subset \tilde{\mathcal{M}}_{p^s}^s = \tilde {\mathcal{M}}_{p^s}\,,
\eean
where
\bean
\label{Def TMs}
\tilde{\mathcal{M}}_{p^s}\,&=&\,\Big\{ \pi_{s} 
\Big(\sum_{r=1}^s\sum_{l=1}^g c_{r,l}(z) \,p^{s-r} \tilde I^{[lp^r-1]}_{p^r}(z)\Big) 
\ |\ c_{r,l}(z)\in\Z[z]_{p^{r}}\Big\},
\\
\label{Def TMst}
\tilde{\mathcal{M}}_{p^s}^t\,&=&\,\Big\{ \pi_{s}\Big(
\sum_{r=1}^t\sum_{l=1}^g c_{r,l}(z) \,p^{s-r} \tilde I^{[lp^r-1]}_{p^r}(z) \Big)
\ |\ c_{r,l}(z)\in\Z[z]_{p^{r}}\Big\},
\eean
$ t=1,\dots,s$.\

\begin{thm}
\label{thm tp^s} 
For any\,  $l$, the vector of polynomials $\tilde I^{[lp^s-1]}_{p^s}(z) \in \Z[z]^n$
is a solution of  system \eqref{KZ} modulo $p^s$. 
For any $t=1,\dots,s$ we have
$\tilde{\mc M}_{p^s}^t= \mc M_{p^s}^t$.

\end{thm}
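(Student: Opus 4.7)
The proof has two parts. For Part 1, I plan to transfer the identities \eqref{i3} and \eqref{i4} from $(x,z)$ to $(v,z)$ via the substitution $x=v+z_n$. For $i\neq n$ the chain rule gives $\partial_{z_i}\tilde P_{p^s}(v,z) = (\partial_{z_i}P_{p^s})(x,z)\big|_{x=v+z_n}$, so combining with \eqref{i4} yields
\[
\left(\partial_{z_i}-\frac12\sum_{j\neq i}\frac{\Om_{ij}}{z_i-z_j}\right)\tilde P_{p^s}(v,z) \,=\, \partial_v \tilde\Psi^i_{p^s}(v,z),
\]
where $\tilde\Psi^i_{p^s}(v,z):=\Psi^i_{p^s}(v+z_n,z)$. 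For $i=n$ the chain rule produces an extra $\partial_v\tilde P_{p^s}$ term, yielding the analogous identity with $\tilde P_{p^s}+\tilde\Psi^n_{p^s}$ on the right. Extracting the coefficient of $v^{lp^s-1}$ introduces a factor $lp^s\equiv 0\pmod{p^s}$ on the right-hand side; the algebraic constraint follows similarly from the $(v,z)$-version of \eqref{i3}.

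For the equality $\tilde{\mc M}^t_{p^s}=\mc M^t_{p^s}$, the key tool is the Taylor expansion
\[
\tilde I^{[lp^r-1]}_{p^r}(z) \,=\, \sum_{a\ge 0}\binom{lp^r-1+a}{a}\,z_n^a\, P^{lp^r-1+a}_{p^r}(z),
\]
which I would analyze term by term. Writing $a=bp^c$ with $\gcd(b,p)=1$ (the $a=0$ term being $I^{[lp^r-1]}_{p^r}(z)$ itself), Kummer's theorem, via carries at positions $c,c+1,\dots,r-1$ of $lp^r-1$ in base $p$, gives $v_p\bigl(\binom{lp^r-1+a}{a}\bigr)\ge r-c$. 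Setting $j=lp^r-1+a$ so that $j+1=(lp^{r-c}+b)p^c$ with the cofactor coprime to $p$, the identity argument of Theorem \ref{thm Fp} (carrying the factor $j+1$ on the right-hand side of \eqref{i4}) shows that $P^j_{p^r}(z)$ solves \eqref{KZ} modulo $p^c$, and by Theorem \ref{thm ind} applied with $\vec M=((p^r-1)/2,\dots,(p^r-1)/2)$ (which satisfies $M_i\equiv-1/2\pmod{p^c}$ since $c\le r$), we have $P^j_{p^r}(z)\in\mc M_{p^c}(\vec M)=\mc M_{p^c}$. The $a$-th term of $p^{s-r}\tilde I^{[lp^r-1]}_{p^r}(z)$ thus factors as $u\cdot z_n^{bp^c}\cdot p^{s-c}\cdot P^j_{p^r}(z)$ for some $u\in\Z$; since $z_n^{bp^c}$ is a quasi-constant modulo $p^c$ while multiplication by $p^{s-c}$ sends $\mc M_{p^c}$ into $\mc M^c_{p^s}$, the term lies in $\mc M^c_{p^s}\subset \mc M^r_{p^s}$. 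Summing over $a$ yields $\tilde{\mc M}^t_{p^s}\subset \mc M^t_{p^s}$.

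For the reverse inclusion, inverting the substitution gives $P_{p^r}(x,z)=\tilde P_{p^r}(x-z_n,z)$ and hence the dual expansion
\[
I^{[kp^r-1]}_{p^r}(z) \,=\, \sum_{a\ge 0}\binom{kp^r-1+a}{a}\,(-z_n)^a\, \tilde P^{kp^r-1+a}_{p^r}(z),
\]
to which the same Kummer/identity analysis applies, provided one establishes the $\tilde P$-analog of Theorem \ref{thm ind}: the module $\tilde{\mc M}_{p^c}(\vec M)$ built from the shifted master polynomial $\Phi(v+z_n,z,\vec M)$ is independent of the choice of $\vec M$. This analog is obtained by rerunning the proof of Theorem \ref{thm ind} verbatim with $x$ replaced by $v+z_n$; passing from $\vec M$ to $\vec M+(0,\dots,0,p^s,0,\dots,0)$ now multiplies $\tilde P$ by $(v-(z_j-z_n))^{p^s}$ (or by $v^{p^s}$ when the shift is at position $n$), and Lemma \ref{lem divi} together with Lucas' theorem apply without change. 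The main obstacle of the whole proof is the $p$-adic bookkeeping, namely counting carries via Kummer correctly and matching the resulting powers of $p$ to the filtration levels, together with the careful adaptation of the $\vec M$-independence argument of Section \ref{sec5} to the shifted master polynomial.
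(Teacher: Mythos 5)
Your proposal is correct and matches the route the paper sketches: Taylor-expand the shifted coefficients $\tilde P^j$ against the $P^{j+a}$ with binomial weights, invoke the divisibility $p^{\,r-c}\mid\binom{lp^r-1+bp^c}{bp^c}$ (the paper's Lemma~\ref{lem div}, which you obtain via Kummer's theorem where the paper uses the Pascal-type identity $\binom{mp^r+lp^s-1}{lp^s-1}=\frac{lp^s}{mp^r}\binom{mp^r+lp^s-1}{mp^r-1}$), and then feed each $P^{j}_{p^r}$ with $v_p(j+1)=c$ into the $\vec M$-independence machinery of Theorem~\ref{thm ind} (and its $\tilde P$-analog for the reverse inclusion). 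The paper's proof is only a pointer to Theorem~\ref{thm ind}, \cite[Lemma 5.2]{V5}, and Lemma~\ref{lem div}, so your write-up is a correct fleshing-out of exactly the intended argument.
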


\begin{proof}
The proof is the same as the proof of Theorem \ref{thm ind} and the proof of \cite[Lemma 5.2]{V5}.
In the proof of Theorem \ref{thm tp^s}  the following Lemma \ref{lem div} 
is used instead of Lemma \ref{lem divi}.

\begin{lem}
\label{lem div} 

Let $r=0,\dots, s-1$ and $m\!\not\vert\, p$, then
$\binom{mp^r +lp^s-1}{lp^s-1}$ is divisible by $p^{s-r}$.
\end{lem}

\begin{proof}  We have $\binom{mp^r +lp^s-1}{lp^s-1}=
\binom{mp^r +lp^s-1}{mp^r} = \frac{lp^s}{mp^r}\,\binom{mp^r +lp^s-1}{mp^r-1}$.
\end{proof}
\end{proof}

\subsection{Change of variables $z$} We introduce the new variables
$u_1,\dots,u_n$ by the formulas\,:
\bean
\label{uz}
\phantom{aaa}
u_1=z_1-z_n, \quad  u_2=\frac{z_{2}-z_n}{z_1-z_{n}},
\ \ \dots \ \
u_{n-1}=\frac{z_{n-1}-z_{n}}{z_{n-2}-z_{n}},
\quad
u_n=z_1+\dots +z_n,
 \eean
or
\bea
z_{i}-z_n = u_1 \cdots u_i, \qquad i=1,\dots,n-1,\qquad
z_1+\dots +z_n = u_n\,.
\eea
For any $l,s$ we denote  $u=(u_1,\dots,u_{n-1})$,
\bean
\label{hat I}
\hat I^{[lp^s-1]}_{p^s}(u)\,: =\, \tilde I^{[lp^s-1]}_{p^s}(z(u))\,.
\eean
Each $\hat I^{[lp^s-1]}_{p^s}(u)$ is an $n$-vector of polynomials in $u$ with coefficients
in $\Z^n$. 

\vsk.2>
Each $\hat I^{[lp^s-1]}_{p^s}(u)$ is a solution of system \eqref{KZ} modulo $p^s$, in which  the change of variables
$z=z(u)$ is performed.

\subsection{Change of variables in the KZ equations} 
It is known that system \eqref{KZ} of the differential
 KZ equations has suitable  asymptotic zones with appropriate local coordinates, in which
the differential KZ equations have singularities only at the coordinate hyperplanes. 
See  a definition of the asymptotic zones, for example, in  \cite{V2}.
The coordinates $u$ defined in \eqref{uz} are local coordinates in one of the asymptotic zones.
In these coordinates,  system  \eqref{KZ}  takes the form,
\bean
\label{Ku}
&&
\frac{\der I}{\der u_n}=0,\qquad 
\frac{\der I}{\der u_i} \,=\, \frac 12 \Big(\frac{\Om_i}{u_i} + \on{Reg}_i(u)\Big) I,
\quad
i=1,\dots,n-1,
\\
\notag
&&
\phantom{aaaaaaaaaaaa}
I_1+\dots+I_n =0,
\eean
where $\Om_i=\sum_{i\leq k<l\leq n}\Om_{k,l}$ and $\on{Reg}_i(u)$ is an $n\times n$-matrix 
depending on $u$ and regular at 
the origin $u=0$. 
The origin is a regular singular point of system \eqref{Ku} and one may expand 
solutions at the origin in  suitable series
in the variables $u$.

\vsk.2>
Any polynomial $\hat I^{[lp^s-1]}_{p^s}(u)$ is a solution of system \eqref{Ku} modulo $p^s$. We will expand the polynomial 
$\hat I^{[lp^s-1]}_{p^s}(u)$ at $u=0$ and show that this expansion has a $p$-adic limit 
as $s\to\infty$.
In that way we will construct  a $g$-dimensional space of $p$-adic solutions of 
system  \eqref{Ku},  which is the same as the original system \eqref{KZ} 
of the differential KZ equations up to the change of variables, $z=z(u)$.

\subsection{Taylor expansion of $\hat I^{[lp^s-1]}_{p^s}(u)$}

Denote
\bean
\label{hmp}
\phantom{aaaa}
\hat \Phi_{p^s}(v,u)
:=
\tilde \Phi_{p^s}(v,z(u)) = 
\left(\prod_{i=1}^{n-1} \Big(v -\prod_{j=1}^iu_j\Big)\right)^{(p^s-1)/2} v^{(p^s-1)/2} ,
\eean
\bean
\label{8.12}
\hat P_{p^s}(v,u) :
=
\Big( \frac {\hat \Phi_{p^s}(v,u)}{v-u_1}, \dots, \frac {\hat \Phi_{p^s}(v,u)}{v-u_1\cdots u_{n-1}},
\frac {\hat \Phi_{p^s}(v,u)}{v}\Big) =
 \sum_i \hat P^{i}_{p^s}(u) \,v^i \,,
\eean
where $\hat P^{i}_{p^s}(u)$ are $n$-vectors of polynomials in $u$ with coefficients in
$\Z$. For a positive integer $l$, we have
\bean
\label{hTmd}
\hat P^{lp^s-1}_{p^s}(u)\,=\,\hat I^{[lp^s-1]}_{p^s}(u)\,,
\eean
where $\hat I^{[lp^s-1]}_{p^s}(u)$ is defined in \eqref{hat I}.

\vsk.2>
For $l=1,\dots,g$, denote
\bean
\label{def uls}
u^{l,s} &=& (-1)^{\delta_l} (u_1\cdots u_{n-2l})^{-l} \,\prod_{i=1}^{n-2l} (u_1\cdots u_i)^{\frac{p^s-1}2}\,,
\eean
or
\bea
&&
u^{g,s} = (-1)^{\delta_g}u_1^{\frac{p^s-1}2 - g}, \quad
u^{g-1,s} = (-1)^{\delta_{g-1}}
u_1^{3\frac{p^s-1}2 - g+1}u_2^{2\frac{p^s-1}2 - g+1}u_3^{\frac{p^s-1}2 - g+1},
\dots
\\
&&
\phantom{aaaa}
u^{1,s} = (-1)^{\delta_1}u_1^{(n-2)\frac{p^s-1}2 - 1}u_2^{(n-3)\frac{p^s-1}2 - 1}\cdots
u_{n-2}^{\frac{p^s-1}2 - 1},
\eea
see $(-1)^{\delta_l}$
in \eqref{deg I}.
Denote
\bean
\label{leac}
C^{l,s} =
\,\binom{\frac{p^s-1}2}{l} 
\Big(0,\dots,0, \frac {2l} {p^s-1},\,1,\dots,1\Big) \,,
\eean
where $0$ is repeated $2g-2l$ times and 1 is repeated $2l$ times,
 cf. formula \eqref{le term}.

\begin{thm}
\label{thm hat I}
For $l=1,\dots,g$, the polynomial $\hat I^{[lp^s-1]}_{p^s}(u)$ has the following form,
\bean
\label{hIf}
\hat I^{[lp^s-1]}_{p^s}(u) 
&=&
 u^{l,s} T^{l,s}(u)\,,
\\
\notag
T^{l,s}(u) 
&=& (T^{l,s}_1(u),\dots,T^{l,s}_n(u)),
\eean
with coordinates $T^{l,s}_j$ defined  as follows.
If $j=1,\dots, n-1$, then
\bean
\label{tj}
T^{l,s}_j
&=& 
 \,u_{j+1}\cdots u_{n-2l}\,\, {\sum}^{l,s,j} \,\binom{\frac{p^s-3}2}{a_{j}}
\,\prod_{i=1, \,i\ne j}^{n-1} \binom{\frac{p^s-1}2}{a_i}
\\
\notag
&\times& \,
 \prod_{i=1}^{n-2l-1}(u_{i+1}\cdots u_{n-2l})^{a_i} \prod_{i=1}^{2l-1}(u_{n-2l+1}\cdots u_{n-2l+i})^{a_{n-2l+i}}\,,
\eean
where the summation ${\sum}^{l,s,j}$
is over all $a_1,\dots,a_{n-1}\in\Z$,  $0\leq a_i\leq \frac{p^s-1}2$,
such that
$a_1+\dots+a_{n-2l}=a_{n-2l+1}+\dots+a_{n-1} + l-1$, if $j\leq n-2l$;
and such that
$a_1+\dots+a_{n-2l}=a_{n-2l+1}+\dots+a_{n-1} + l$, if $ n-2l<j\leq n-1$;
\bean
\label{tn}
&&
\phantom{aaa}
\\
\notag
&&
T^{l,s}_n = \, {\sum}^{l,s,n} \,\,\prod_{i=1}^{n-1}
\binom{\frac{p^s-1}2}{a_i}
 \prod_{i=1}^{n-2l-1}(u_{i+1}\cdots u_{n-2l})^{a_i} \prod_{i=1}^{2l-1}(u_{n-2l+1}\cdots u_{n-1})^{a_{n-2l+i}}\,,
\eean
where the summation ${\sum}^{l,s,n}$
is over all $a_1,\dots,a_{n-1}\in\Z$,  $0\leq a_i\leq \frac{p^s-1}2$,
such that 
$a_1+\dots+a_{n-2l}=a_{n-2l+1}+\dots+a_{n-1} + l$.

The constant term of $T^{l,s}(u)$
equals $C^{l,s}$. 

\end{thm}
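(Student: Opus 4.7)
My plan is to expand the definition of $\hat P_{p^s}(v,u)$ as a polynomial in $v$ via the binomial theorem, extract the coefficient of $v^{lp^s-1}$ component by component, and then apply a carefully chosen substitution of summation indices that produces formula \eqref{hIf}. Writing $M:=(p^s-1)/2$ and $w_i:=u_1\cdots u_i$, we have $\hat\Phi_{p^s}(v,u)=v^M\prod_{i=1}^{n-1}(v-w_i)^M$. For $j\le n-1$, expanding $\hat\Phi_{p^s}/(v-w_j)$ via the binomial theorem applied to $(v-w_j)^{M-1}$ and the $(v-w_i)^M$ for $i\ne j$, the coefficient of $v^{lp^s-1}$ is the sum over tuples $(k_1,\dots,k_{n-1})$ with $k_j\in[0,M-1]$, $k_i\in[0,M]$ for $i\ne j$, and $\sum_i k_i=(n-2l)M-l$, of $(-1)^{\sum k_i}\binom{M-1}{k_j}\prod_{i\ne j}\binom{M}{k_i}\prod_i w_i^{k_i}$; the $n$-th component is analogous, with $\binom{M}{k_i}$ for every $i$ and no distinguished index.

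The key step is the substitution $k_i=M-a_i$ for $i\le n-2l$, $i\ne j$; $k_j=M-1-a_j$ if $j\le n-2l$; $k_i=a_i$ for $i>n-2l$, $i\ne j$; and $k_j=a_j$ if $j>n-2l$. Under this substitution the constraint $\sum k_i=(n-2l)M-l$ becomes $\sum_{i\le n-2l}a_i-\sum_{i>n-2l}a_i=l-1$ when $j\le n-2l$ and $=l$ when $j>n-2l$ or $j=n$, exactly matching the conditions $\sum^{l,s,j}$. The binomials transform via $\binom{M}{M-a}=\binom{M}{a}$ and $\binom{M-1}{M-1-a}=\binom{M-1}{a}$ into those claimed (the extension of the $a_j$-range from $[0,M-1]$ to $[0,M]$ in the statement is harmless since $\binom{M-1}{M}=0$), while the sign $(-1)^{\sum k_i}$ equals $(-1)^{\dl_l}$ because $\sum k_i=(n-2l)M-l=\dl_l$ as integers.

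To factor out $u^{l,s}$ from the monomial $\prod w_i^{k_i}$, I combine the definition $u^{l,s}=(-1)^{\dl_l}w_{n-2l}^{-l}\prod_{i=1}^{n-2l}w_i^M$ with the elementary ratios $w_{n-2l}/w_i=u_{i+1}\cdots u_{n-2l}$ for $i\le n-2l$ and $w_i/w_{n-2l}=u_{n-2l+1}\cdots u_i$ for $i>n-2l$; the leftover power of $w_{n-2l}$ is eliminated using the transformed constraint. In the case $j\le n-2l$ the constraint shift by $-1$ produces an additional factor $w_{n-2l}/w_j=u_{j+1}\cdots u_{n-2l}$, which is precisely the prefactor in \eqref{tj}; in the remaining cases this prefactor is the empty product $1$. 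The $i=n-2l$ term in the claimed first product is also an empty product, which is why the first product in \eqref{tj} is indexed only by $i\le n-2l-1$. This completes the derivation of $T^{l,s}(u)$.

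For the constant term, setting $u=0$ kills every monomial summand unless each $a_i$ with $i\ne n-2l$ is zero, since each such $a_i$ multiplies a nonempty product $u_{i+1}\cdots u_{n-2l}$ or $u_{n-2l+1}\cdots u_{n-2l+i}$ that vanishes at $u=0$; the constraint then forces $a_{n-2l}=l-1$ if $j=n-2l$ and $a_{n-2l}=l$ if $j>n-2l$ or $j=n$, so the surviving binomial factor is $\binom{M-1}{l-1}=\tfrac{l}{M}\binom{M}{l}=\tfrac{2l}{p^s-1}\binom{M}{l}$ in the first case and $\binom{M}{l}$ in the second, matching $C^{l,s}$ entrywise (the first $2g-2l$ zeros of $C^{l,s}$ correspond to $j<n-2l$, where the explicit prefactor $u_{j+1}\cdots u_{n-2l}$ itself vanishes at $u=0$). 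The main obstacle is less any single nontrivial step than the simultaneous bookkeeping of the three cases ($j<n-2l$, $n-2l\le j\le n-1$, $j=n$) and the constraint shift by $-1$ responsible for the $u_{j+1}\cdots u_{n-2l}$ prefactor; one must track binomials, signs, and monomial ratios through each case without arithmetic error.
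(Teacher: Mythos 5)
Your proof is correct. Let me compare it to the paper's.

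The paper proves the theorem by making the change of the integration variable $v = w\,u_1\cdots u_{n-2l}$ inside $\hat P_{p^s}(v,u)$, then factoring each linear factor in two ways depending on whether $i\le n-2l$ or $i>n-2l$ (identities \eqref{exp1} and \eqref{exp2}), and finally extracting the coefficient of $w^{\,l}$ or $w^{\,l-1}$. In effect, the paper's rescaling $v\mapsto w\,u_1\cdots u_{n-2l}$ and the two factorizations \eqref{exp1}, \eqref{exp2} automatically normalize the exponents so that the binomial indices come out directly as the $a_i$ in \eqref{tj}--\eqref{tn}. You instead expand $\hat\Phi_{p^s}/(v-w_j)$ term by term in $v$ via the binomial theorem without any preliminary change of variable, and then perform the complementary index substitution $k_i = M - a_i$ for $i\le n-2l$ (with $M-1-a_j$ at the distinguished index), $k_i=a_i$ for $i>n-2l$, which is exactly the re-indexing that the paper's change of variables implements implicitly. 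The two routes are logically equivalent, but yours is slightly more elementary in that it avoids the auxiliary variable $w$; the price is that the complementary substitution is less obviously motivated than the paper's geometric rescaling, which is also what is reused for the holomorphic integrals in Theorem \ref{thm hot I}. Your bookkeeping of all three cases ($j<n-2l$, $n-2l\le j\le n-1$, $j=n$), the sign $(-1)^{\sum k_i}=(-1)^{\delta_l}$, the elimination of the leftover power of $w_{n-2l}=u_1\cdots u_{n-2l}$ via the constraint, the origin of the prefactor $u_{j+1}\cdots u_{n-2l}$ from the $-1$ shift, the harmless range extension at $a_j=M$, and the computation of the constant term (including the identity $\binom{M-1}{l-1}=\tfrac{2l}{p^s-1}\binom{M}{l}$) are all correct.
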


Notice that the factor $u_{j+1}\cdots u_{n-2l}$ in \eqref{tj} equals 1 if $j\geq n-2l$.

\begin{proof}

Make the change of variables $v=w u_1\cdots u_{n-2l}$ in \eqref{8.12},
\bea
P_{p^s}^\circ (w,u) :=\hat P_{p^s}(w u_1\cdots u_{n-2l},u) 
=
 \sum_i \hat P^{i}_{p^s}(u) \,(w u_1\cdots u_{n-2l})^i =:
\sum_i P^{\circ,i}_{p^s}(u) \, w ^i\,.
\eea
Hence
\bea
\hat I^{[lp^s-1]}_{p^s}(u) \,= \,\big(u_1\cdots u_{n-2l}\big)^{-(lp^s-1)}\,P^{\circ,lp^s-1}_{p^s}(u)\,.
\eea
We transform the factors in the polynomial 
$P_{p^s}^\circ (w,u)$ as follows. For any positive integers $k$ and $i \leq n-2l$  we write
\bean
\label{exp1}
(w u_1\cdots u_{n-2l}-u_1\cdots u_i)^k 
&=& (u_1\cdots u_i)^k (wu_{i+1}\cdots u_{n-2l}- 1)^k
\\
\notag
&=&
(u_1\cdots u_i)^k \sum_{a=0}^k(-1)^{k-a}
\binom{k}{a}(wu_{i+1}\cdots u_{n-2l})^a,
\eean
and if $i > n-2l$,   we write
\bean
\label{exp2}
(w u_1\cdots u_{n-2l}-u_1\cdots u_i)^k 
&=& (w u_1\cdots u_{n-2})^k (1-u_{n-2l+1}\cdots u_i/w)^k
\\
\notag
&=&
 (w u_1\cdots u_{n-2})^k 
 \sum_{a=0}^k(-1)^a
\binom{k}{a}(u_{n-2l+1}\cdots u_i/w)^a.
\eean
Notice that for factors in \eqref{8.12}, we have $k=\frac{p^s-1}2$ or $k=\frac{p^s-1}2-1$. This explains the binomial coefficients in
\eqref{tj} and \eqref{tn}.

\vsk.2>

We prove formula \eqref{tj} for $j=1$,  the proof for other values of $j$ is similar.

\vsk.2>
Our goal is to calculate the first coordinate of the vector
$\big(u_1\cdots u_{n-2}\big)^{-(lp^s-1)}\,P^{\circ,lp^s-1}_{p^s}(u)$. 
That is we need to calculate the coefficient of $w^{lp^s-1}$ in
\bea
&&
\big(u_1\cdots u_{n-2}\big)^{-(lp^s-1)} 
u_1^{\frac{p^s-1}2-1}\,(wu_2\cdots u_{n-2l} -1)^{\frac{p^s-1}2-1}
(u_1u_2)^{\frac{p^s-1}2}(wu_3\cdots u_{n-2l} -1)^{\frac{p^s-1}2}\cdots
\\
&&
\phantom{aaaa}
\cdots
(u_1\cdots u_{n-2l})^{\frac{p^s-1}2}(w -1)^{\frac{p^s-1}2}
(wu_1\cdots u_{n-2l})^{\frac{p^s-1}2} (1 -u_{n-2l+1}/w)^{\frac{p^s-1}2}
\cdots
\\
&&
\phantom{aaaaaaaa}
\cdots
(wu_1\cdots u_{n-2l})^{\frac{p^s-1}2}(1 -u_{n-2l+1}\cdots u_{n-1}/w)^{\frac{p^s-1}2}
(wu_1\cdots u_{n-2l})^{\frac{p^s-1}2}\,,
\eea
which is the same as the coefficient of $w^{l-1}$ in
\bean
\label{8.21}
&&
\, u_2\cdots u_{n-2l}\,u^{l,s}\,
(wu_2\cdots u_{n-2l} -1)^{\frac{p^s-1}2-1}
(wu_3\cdots u_{n-2l} -1)^{\frac{p^s-1}2}\cdots
\\
&&
\notag
\phantom{aaaa}
\cdots
(w -1)^{\frac{p^s-1}2}
(1 -u_{n-2l+1}/w)^{\frac{p^s-1}2}
\cdots
(1 -u_{n-2l+1}\cdots u_{n-1}/w)^{\frac{p^s-1}2}\,.
\eean
Expanding the binomials we obtain formula \eqref{tj} for $j=1$.

\vsk.2>
The constant term of $T^{l,s}(u)$ is given by
the summands in \eqref{tj} and \eqref{tn}, corresponding to 
$a_1=\dots=a_{n-2l-1}=a_{n-2l+1}=\dots = a_{n-1}=0$  and $j=n-2l, \dots, n$.
Theorem \ref{thm hat I} is proved.
\end{proof}

\subsection{Taylor expansion of holomorphic solutions}
\label{sec 9.5}

Recall the multivalued holomorphic solutions of system \eqref{KZ} described in Section \ref{sec2},
\bea
I^{(\ga)}(z) 
=
\int_\ga\Big(\frac {\Phi(x,z)}{x-z_1}, \dots,\frac {\Phi(x,z)}{x-z_n}\Big) dx\,.
\eea
We make the same changes of variables in the integrals $I^{(\ga)}(z)$ as we did in
the previous sections. Namely, first we change the integration variable $x$ and set  $x=v+z_n$,
then we make the change of variables $z$ and set
$z=z(u)$. The resulting integral is
\bea
\hat I^{(\ga)}(u) 
&=&
\int_\ga\Big(\frac {\hat \Phi(v, u)}{v-u_1}, \dots,\frac {\hat \Phi(v,u)}{v}\Big) dv\,,
\eea
where $\hat \Phi(v, u)
=
\left(\prod_{i=1}^{n-1} \Big(v -\prod_{j=1}^iu_j\Big)\right)^{-1/2}v^{-1/2}$.

\vsk.2>
For $l=1,\dots, g$, we change the integration variable $v$ and set $v=w u_1\dots u_{n-2l}$. 
Then
\bea
&&
\hat \Phi(w u_1\dots u_{n-2l}, u) = e^{(l-n/2)\pi i}\,
(u_1\cdots u_{n-2l})^{-l} \prod_{i=1}^{n-2l} (u_1\cdots u_i)^{-1/2}
\\
&&
\phantom{a}
\times\, 
\Big((1-wu_2\cdots u_{n-2l}) (1-wu_3\cdots u_{n-2l})\cdots (1-w)
\\
&&
\phantom{aaa}
\times\,
(1 -u_{n-2l+1}/w)
\cdots
(1 -u_{n-2l+1}\cdots u_{n-1}/w)\Big)^{-1/2} \, w^{-l}\,.
\eea

 Choose
the integration cycle $\ga=\gamma_l$ to be the circle $|w|=1/2$ oriented counter-clockwise.
We assume that all the variables $u_2,\dots,u_{n-1}$ lie inside the circle. We fix the branch of 
the function
\bean
\label{arg}
&&
\Big((1-wu_2\cdots u_{n-2l}) (1-wu_3\cdots u_{n-2l})\cdots (1-w)
\\
\notag
&&
\phantom{aaa}
\times\,
(1 -u_{n-2l+1}/w)
\cdots
(1 -u_{n-2l+1}\cdots u_{n-1}/w)\Big)^{-1/2} 
\eean
over the circle by choosing the argument of the function in \eqref{arg}
 at $w=1/2$, $u_2=\dots =u_{n-1}=0$  to be $0$.
 We multiply the circle with the chosen branch of the integrand
by $\frac { e^{n\pi i/2}}{2\pi i}$. This finishes the description of $\ga_1$.

The resulting integral is
\bea
\hat I^{(\ga_l)}(u) 
&=&
\frac { e^{n\pi i/2}}{2\pi i}\,\int_{|w|=1/2}\Big(\frac {\hat \Phi(w u_1\dots u_{n-2l}, u)}{w u_1\dots u_{n-2l}-u_1}, 
\dots,\frac {\hat \Phi(w u_1\dots u_{n-2l},u)}{w u_1\dots u_{n-2l}}\Big) u_1\dots u_{n-2l}\,dw\,.
\eea
Denote
\bean
\label{U l}
u^l:= (u_1\cdots u_{n-2l})^{-l} \prod_{i=1}^{n-2l} (u_1\cdots u_i)^{-1/2}\,.
\eean

\begin{thm}
\label{thm hot I}
For $l=1,\dots,g$, the function $\hat I^{(\ga_l)}(u)$ has the following form,
\bean
\label{hoIf}
\hat I^{(\ga_l)}(u) 
&=&
u^l\, T^{l}(u)\,,\qquad
T^{l}(u) 
= (T^{l}_1(u),\dots,T^{l}_n(u)),
\eean
with coordinates $T^{l}_j$ defined as follows.
If $j=1,\dots, n-1$, then
\bean
\label{Tj}
T^{l}_j
&=& 
 \,u_{j+1}\cdots u_{n-2l}\,\, {\sum}^{l,j} \,\binom{\frac{-3}2}{a_{j}}
\,\prod_{i=1, \,i\ne j}^{n-1} \binom{\frac{-1}2}{a_i}
\\
\notag
&\times& \,
 \prod_{i=1}^{n-2l-1}(u_{i+1}\cdots u_{n-2l})^{a_i} \prod_{i=1}^{2l-1}(u_{n-2l+1}\cdots u_{n-2l+i})^{a_{n-2l+i}}\,,
\eean
where the summation ${\sum}^{l,j}$
is over all $a_1,\dots,a_{n-1}\in\Z_{\geq 0}$
such that
$a_1+\dots+a_{n-2l}=a_{n-2l+1}+\dots+a_{n-1} + l-1$, if $j\leq n-2l$;
and such that
$a_1+\dots+a_{n-2l}=a_{n-2l+1}+\dots+a_{n-1} + l$, if $ n-2l<j\leq n-1$;
\bean
\label{Tn}
&&
\phantom{aaa}
T^{l}_n = \, {\sum}^{l,n} \,\,\prod_{i=1}^{n-1}
\binom{\frac{-1}2}{a_i}
 \prod_{i=1}^{n-2l-1}(u_{i+1}\cdots u_{n-2l})^{a_i} \prod_{i=1}^{2l-1}(u_{n-2l+1}\cdots u_{n-1})^{a_{n-2l+i}}\,,
\eean
where the summation ${\sum}^{l,n}$
is over all $a_1,\dots,a_{n-1}\in\Z_{\geq 0}$
such that 
$a_1+\dots+a_{n-2l}=a_{n-2l+1}+\dots+a_{n-1} + l$.

The power series $T^l(u)$ converges in the polydisc $\{(u_2,\dots,u_{n-1})\in\C^{n-1}\ |\
|u_i|<1,\, i=2,\dots,n-1\}$.
\end{thm}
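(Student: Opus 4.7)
The plan is to follow the proof of Theorem \ref{thm hat I} essentially line by line, replacing the polynomial operation ``extract the coefficient of $w^{lp^s-1}$'' by the residue operation $\frac{(-1)^{n/2}}{2\pi i}\int_{|w|=1/2}(\,\cdot\,)\,dw$, and the polynomial expansions by convergent generalized binomial series.

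First I would substitute $v=wu_1\cdots u_{n-2l}$ in $\hat I^{(\gamma_l)}(u)$ and rewrite the $j$-th component of the integrand, for $1\le j\le n-1$, as
\[
(v-u_1\cdots u_j)^{-3/2}\,v^{-1/2}\prod_{i\ne j,\,i=1}^{n-1}(v-u_1\cdots u_i)^{-1/2},
\]
and for $j=n$ as $v^{-3/2}\prod_{i=1}^{n-1}(v-u_1\cdots u_i)^{-1/2}$; the factor with exponent $-3/2$ is exactly the one that will produce the $\binom{-3/2}{a_j}$ coefficient in $T^l_j$. Then, using identities analogous to \eqref{exp1}--\eqref{exp2} but with real exponent $\alpha\in\{-1/2,-3/2\}$ in place of the integer $\frac{p^s-1}2$, I would factor each $(v-u_1\cdots u_i)^\alpha$ and pull out the resulting $u_k$-monomial. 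A direct computation strictly parallel to the proof of Theorem \ref{thm hat I} shows that the accumulated $u_k$-monomials, together with the Jacobian $u_1\cdots u_{n-2l}$ from $dv=u_1\cdots u_{n-2l}\,dw$, reassemble into $u^l$ multiplied by the trailing factor $u_{j+1}\cdots u_{n-2l}$ displayed in \eqref{Tj}--\eqref{Tn}.

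Next I would expand each remaining factor $(1-wu_{i+1}\cdots u_{n-2l})^\alpha$ (from $i\le n-2l$) and $(1-u_{n-2l+1}\cdots u_i/w)^\alpha$ (from $i>n-2l$) by the generalized binomial theorem. For $|w|=1/2$ and $|u_k|<1-\varepsilon$ these expansions are absolutely and uniformly convergent, so sum and integral may be interchanged. The contour integral then extracts the coefficient of $w^{-1}$; tracking the $w$-degree of a typical monomial and setting it equal to $-1$ yields exactly the linear constraint
\[
\sum_{i=1}^{n-2l}a_i\,=\,\sum_{i=n-2l+1}^{n-1}a_i\,+\,(l-1)\ \text{ if }j\le n-2l,\qquad \text{or}\ +\,l\ \text{ if }n-2l<j\le n,
\]
matching \eqref{Tj} and \eqref{Tn}.

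Convergence of $T^l(u)$ on the polydisc $\{|u_k|<1:k=2,\ldots,n-1\}$ is straightforward: $\binom{-1/2}{a}$ and $\binom{-3/2}{a}$ are $O(a^{-1/2})$ by Stirling, while every monomial variable $u_{i+1}\cdots u_{n-2l}$ and $u_{n-2l+1}\cdots u_{n-2l+i}$ involves only $u_k$'s with $k\ge 2$, so the $(n-1)$-fold series is term-wise dominated by a product of geometric series in those $|u_k|$'s. I expect the main obstacle to be the sign-and-branch bookkeeping of the first step: one must check that the phases $(-1)^\alpha$ produced by writing $(wu_{i+1}\cdots u_{n-2l}-1)^\alpha=(-1)^\alpha(1-wu_{i+1}\cdots u_{n-2l})^\alpha$ combine coherently with the overall $(-1)^{n/2}$ to give the rational coefficients $\binom{-1/2}{a_i}$ and $\binom{-3/2}{a_j}$ in \eqref{Tj}--\eqref{Tn}. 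Consistency of this branch choice is guaranteed by the fact that, for small $|u_k|$, exactly $2l$ branch points of $\hat\Phi$ lie inside $|w|=1/2$ (an even number), so a single-valued square root is available on a neighbourhood of the cycle $\gamma_l$.
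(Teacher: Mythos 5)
Your proposal follows the same route as the paper's own proof: substitute $v=wu_1\cdots u_{n-2l}$, factor each linear form to pull out the monomial prefactor that assembles into $u^l$, expand the remaining factors by the generalized binomial theorem, and read off the residue at $w=0$ (the coefficient of $w^{l-1}$ in $f(w)$, equivalently of $w^{-1}$ in $f(w)/w^l$), which yields the linear constraint on $a_1,\dots,a_{n-1}$. The extra material you supply (uniform convergence to justify interchange of sum and integral, the branch check via the even number of branch points inside $|w|=1/2$, and explicit tracking of the $w$-degree) only makes explicit what the paper leaves implicit, so this is not a genuinely different argument.

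One small slip worth flagging: you claim both $\binom{-1/2}{a}$ and $\binom{-3/2}{a}$ are $O(a^{-1/2})$. In fact $\binom{\alpha}{a}\sim(-1)^a/(\Gamma(-\alpha)\,a^{1+\alpha})$, so $\binom{-1/2}{a}=O(a^{-1/2})$ but $\binom{-3/2}{a}=O(a^{1/2})$; the latter grows. This does not affect the conclusion, since the geometric factors $|u_{i+1}\cdots u_{n-2l}|^{a_i}<(1-\varepsilon)^{a_i}$ dominate any polynomial growth in the $a_i$'s, but the stated bound should be corrected.
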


\begin{proof} 

We prove formula \eqref{tj} for $j=1$,  the proof for other values of $j$ is similar.

\vsk.2>
The function $T^{l}_1(u)$ equals
\bea
&&
\frac { e^{n\pi i/2}}{2\pi i}\, \int_{|w|=1/2}
u_1^{\frac{-3}2}\,(wu_2\cdots u_{n-2l} -1)^{\frac{-3}2}
(u_1u_2)^{\frac{-1}2}(wu_3\cdots u_{n-2l} -1)^{\frac{-1}2}\cdots
\\
&&
\phantom{a}
\cdots
(u_1\cdots u_{n-2l})^{\frac{-1}2}(w -1)^{\frac{-1}2}
(wu_1\cdots u_{n-2l})^{\frac{-1}2} (1 -u_{n-2l+1}/w)^{\frac{-1}2}
\cdots
\\
&&
\phantom{aa}
\cdots
(wu_1\cdots u_{n-2l})^{\frac{-1}2}(1 -u_{n-2l+1}\cdots u_{n-1}/w)^{\frac{-1}2}
(wu_1\cdots u_{n-2l})^{\frac{-1}2}\,u_1\cdots u_{n-2l}\,dw 
\eea
\bea
&&
=\, \frac {(-1)^{l-1}}{2\pi i}\, 
\, u_1^{-1}(u_1\cdots u_{n-2l})^{-l+1} \prod_{i=1}^{n-2l} (u_1\cdots u_i)^{\frac{-1}2}
\\
&&
\phantom{a}
\times\, \int_{|w|=1/2} 
(1-wu_2\cdots u_{n-2l} )^{\frac{-3}2}
\Big((1-wu_3\cdots u_{n-2l})\cdots (1-w)
\\
&&
\phantom{aaa}
\times\,
(1 -u_{n-2l+1}/w)
\cdots
(1 -u_{n-2l+1}\cdots u_{n-1}/w)\Big)^{\frac{-1}2} \,\frac{dw}{w^l}\,.
\eea
Expanding the binomials we obtain formula \eqref{Tj} for $j=1$.

The convergence property is clear.
\end{proof}

\subsection{Formal solutions over $\Q_p$ and truncation}

For $l=1,\dots,g$, the formal series  $\hat I^{(\ga_l)}(u)$ is a formal solution of system
\eqref{KZ}, in which  the change of variables
$z=z(u)$ is performed and which is considered over the field $\Q_p$
 of $p$-adic numbers.

\begin{lem}
\label{lem padind}
The formal series  $\hat I^{(\ga_l)}(u)$, $l=1,\dots,g$, 
are linear independent over the 
\linebreak
field $\Q_p$.

\end{lem}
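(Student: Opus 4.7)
The plan is to separate the series $\hat I^{(\gamma_l)}(u)$ according to a Puiseux-monomial invariant read off from the factorization $\hat I^{(\gamma_l)}(u) = u^l T^l(u)$ given in Theorem \ref{thm hot I}. The prefactors $u^l$ carry half-integer exponents in specific patterns, whereas each $T^l(u)$ is a genuine power series in $u_2,\dots,u_{n-1}$ with integer exponents; comparing fractional parts of exponents decouples the $g$ contributions.

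First, I would compute, for each $l$, the vector $\phi_l \in (\Q/\Z)^{n-1}$ of residues modulo $\Z$ of the exponents of $u_1,\dots,u_{n-1}$ in the prefactor $u^l$. From \eqref{U l} the exponent of $u_j$ in $u^l$ equals $-(n-j+1)/2$ for $1 \le j \le n-2l$ and $0$ otherwise. Since $n = 2g+1$ is odd, $-(n-j+1)/2$ has fractional part $\tfrac12$ exactly when $j$ is odd; hence $\phi_l$ carries entry $\tfrac12$ at every odd index $j$ with $j \le n-2l$, and entry $0$ elsewhere. The largest odd index at which $\phi_l$ equals $\tfrac12$ is $n-2l$, which recovers $l$ uniquely, so $\phi_1,\dots,\phi_g$ are pairwise distinct.

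Second, I would observe that each $T^l(u)$ is a nonzero formal power series in $u_2,\dots,u_{n-1}$ over $\Q_p$. Indeed, the explicit formulas \eqref{Tj}, \eqref{Tn} contain a summand with $a_{n-2l} = l$ (respectively $l-1$ for the coordinates $j \le n-2l$) and all other $a_i = 0$, producing the monomial $1$ with a coefficient proportional to $\binom{-1/2}{l}$ or $\binom{-3/2}{l-1}$. Because $p > n \ge 2l$, these binomial coefficients are units in $\Z_p$, so the constant term of $T^l(u)$ is nonzero in $\Q_p^n$.

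Finally, suppose $\sum_{l=1}^g c_l \hat I^{(\gamma_l)}(u) = 0$ with $c_l \in \Q_p$. Every Puiseux monomial occurring in $u^l T^l(u)$ has fractional-exponent vector equal to $\phi_l$. Grouping monomials in the total sum by this invariant and using that the $\phi_l$ are distinct, the vanishing of the sum forces $c_l u^l T^l(u) = 0$ for each $l$ separately, and $T^l \not\equiv 0$ then gives $c_l = 0$. The only delicate point is checking pairwise distinctness of the invariants $\phi_l$, which is a brief combinatorial verification; everything else is formal.
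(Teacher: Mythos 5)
Your proposal is correct and takes essentially the same route as the paper: the paper's one-line proof reduces the claim to the linear independence of the Puiseux prefactors $u^l$, and your argument makes this precise by reading off the distinct fractional-exponent vectors $\phi_l$ and checking that each $T^l(u)$ has nonzero constant term (e.g.\ a coordinate proportional to $\binom{-1/2}{l}$, a $p$-adic unit since $p>n\geq 2l$). Your write-up simply supplies the two small verifications the paper leaves implicit.
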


\begin{proof} The proof follows from the fact that the  monomials $u^l$, $l=1,\dots,n$, are linear independent over
$\Q_p$.
\end{proof}

\subsection{Example $n=3$} 
\label{sec Exn3}

In this case we have $g=1$, $u_1=z_1-z_3$, 
$u_2=\frac{z_2-z_3}{z_1-z_3}$,
\bean
\label{hat g=1 s}
\hat I^{(\ga_1)}(u_1, u_2) 
&=&
 u_1^{-3/2}  \sum_{a=0}^\infty \Big(\binom{-\frac32}{a}\binom{-\frac12}{a},
\binom{-\frac12}{a+1}\binom{-\frac32}{a},
\binom{-\frac12}{a+1}\binom{-\frac12}{a}\Big) \,u_2^a
\\
\notag
&=&
 u_1^{-3/2}  \sum_{a=0}^\infty \binom{-\frac12}{a+1}\binom{-\frac32}{a}
\Big(\frac{a+1}{-1/2-a},1,\frac{-1/2}{-1/2-a}\Big) u_2^a
\eean
\bean
\label{g=1 s}
&&
\hat I^{[p^s-1]}(u_1,u_2)  =
\\
\notag
&&
=\,
 (-1)^{\frac{p^s-3}2} u_1^{\frac{p^s-3}2} 
 \sum_{a=0}^{\frac{p^s-1}2}\Big(\binom{\frac{p^s-3}2}{a}\binom{\frac{p^s-1}2}{a},
\binom{\frac{p^s-1}2}{a+1}\binom{\frac{p^s-3}2}{a},
\binom{\frac{p^s-1}2}{a+1}\binom{\frac{p^s-1}2}{a}\Big) \,u_2^a
\\
\notag
&&
 =(-1)^{\frac{p^s-3}2} u_1^{\frac{p^s-3}2} 
\sum_{a=0}^{\frac{p^s-1}2} \binom{\frac{p^s-1}2}{a+1}\binom{\frac{p^s-3}2}{a}
\Big(\frac {a+1}{(p^s-1)/2-a},1,\frac {(p^s-1)/2}{(p^s-1)/2-a}\Big) u_2^a\,
\eean

\section{$p$-Adic convergence} 
\label{sec10}

Consider the field  $\Q_p$  with the standard $p$-adic norm $|t|_p,$ \,$t\in\Q_p$.
In this section we consider the polynomial solutions $((-1)^{\delta_l}T^{l,s}(u))_{s=1}^\infty$
and the formal solutions $\hat I^{(\ga_l)}(u)$ as functions 
\linebreak
on $\Q_p^{n-1}$.

Recall that $\Z_p\subset \Q_p$ denotes the ring of $p$-adic integers.

\subsection{Teichmuller representatives}

For  $t\in\Z_p$ there exists the unique solution $\om(t)\in \Z_p$ of the equation 
$\om(t)^p=\om(t)$ that is congruent to $t$ modulo $p$. The element $\om(t)$
is called the {\it Teichmuller representative}. It also can be defined by $\om(t) = \lim_{s\to\infty} t^{p^s}$.
The {\it Teichmuller character} is the homomorphism 
\bea
\F_p^\times \to \Z_p^\times, \qquad \al \mapsto \om(\al), \qquad \al\in \F_p^\times.
\eea

For $\al \in\F_p$, $r>0$, define the disc
\bea
 D_{\al,r} = \{ t\in \Z_p\  |  \  |t-\om(\al)|_p < r \}.
\eea
The space $\Z_p$ is the disjoint union of the  discs $D_{\al,1}$, $\al \in \F_p$.
The function $\om : \Z_p\to\Z_p$, $t\mapsto \om(t)$, is a locally constant function 
equal to $\om(\al)$ on the disc $D_{\al,1}$. 

For a subset $S\subset \Z_p$ and a function $f:S\to \Z_p$ define the norm
\bea
\parallel f\parallel = \sup_{t\in S} |f(t)|_p\,.
\eea

\begin{lem}
\label{lem xps} 
For any $\al\in \F_p$ the sequence of polynomial functions
$(x^{p^s})_{s=1}^\infty$ uniformly converges on $D_{\al,1}$ to the 
constant function $\om(\al)$.
\end{lem}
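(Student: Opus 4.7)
The plan is to exploit the defining property $\om(\al)^p=\om(\al)$ of the Teichmuller representative, which immediately implies $\om(\al)^{p^s}=\om(\al)$ for every $s\ge 0$, and then to show by induction on $s$ that
\[
|t^{p^s}-\om(\al)|_p \,\le\, p^{-(s+1)}
\]
uniformly for $t\in D_{\al,1}$. Since the right-hand side tends to $0$ as $s\to\infty$ and is independent of $t$, this gives the desired uniform convergence.

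To set up the induction, I would first prove the base step $s=1$ explicitly. Any $t\in D_{\al,1}$ can be written as $t=\om(\al)+\eps$ with $|\eps|_p\le p^{-1}$. Using $\om(\al)^p=\om(\al)$ and the binomial theorem,
\[
t^p-\om(\al) \,=\, \sum_{j=1}^{p}\binom{p}{j}\om(\al)^{p-j}\eps^{j}.
\]
For $1\le j\le p-1$ one has $|\binom{p}{j}|_p=p^{-1}$, so each such term has norm at most $p^{-1}\cdot p^{-j}\le p^{-2}$; the last term satisfies $|\eps^p|_p\le p^{-p}\le p^{-2}$. The non-archimedean triangle inequality then yields $|t^p-\om(\al)|_p\le p^{-2}$.

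For the inductive step I would apply the same computation to the element $t':=t^{p^{s}}$ in place of $t$: by the inductive hypothesis $t'=\om(\al)+\eps'$ with $|\eps'|_p\le p^{-(s+1)}$, and repeating the above binomial estimate (now with $|\eps'|_p\le p^{-(s+1)}\le p^{-2}$) gives
\[
|t^{p^{s+1}}-\om(\al)|_p \,=\, |(t')^{p}-\om(\al)|_p \,\le\, p^{-(s+2)},
\]
since for $1\le j\le p-1$ each term has norm at most $p^{-1}\cdot p^{-j(s+1)}\le p^{-(s+2)}$, and the last term is $|(\eps')^p|_p\le p^{-p(s+1)}\le p^{-(s+2)}$. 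This closes the induction and establishes the claim.

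The argument is essentially routine $p$-adic calculus; the only subtlety, and the only place where one must be careful, is tracking the valuations of the binomial coefficients $\binom{p}{j}$ together with the powers of $\eps$, to check that the worst term in the expansion is indeed the linear one (with norm exactly $p^{-(k+1)}$ when $|\eps|_p=p^{-k}$) rather than one of the higher-order terms. Once that bookkeeping is in place, the uniformity of the bound on $D_{\al,1}$ is automatic because it depends only on $|\eps|_p\le p^{-1}$ and not on $t$ itself.
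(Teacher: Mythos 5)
Your proof is correct. The base step and the inductive step are both sound: writing $t=\om(\al)+\eps$ with $|\eps|_p\le p^{-1}$ (which is what $|t-\om(\al)|_p<1$ means in $\Z_p$), the binomial expansion together with $|\binom{p}{j}|_p=p^{-1}$ for $1\le j\le p-1$ and $|\om(\al)|_p\le 1$ gives $|t^p-\om(\al)|_p\le p^{-2}$, and the inductive step correctly applies the same estimate to $t'=t^{p^s}$, where one only needs $p(s+1)\ge s+2$ to control the top term, which holds for every prime $p$ and $s\ge 1$.

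Your route is a genuinely self-contained variant of the paper's argument. The paper invokes Robert's ``fundamental inequality'' $|(1+t)^{p^s}-1|_p\le|t|_p\cdot\max(|t|_p,1/p)^s$ to show the sequence $(t^{p^s})_s$ is uniformly Cauchy, and then identifies the common limit on each disc by plugging in the point $\om(\al)$, treating $\al=0$ and $\al\ne 0$ separately via the factorization $t_1^{p^s}-t_2^{p^s}=t_2^{p^s}\bigl((t_1/t_2)^{p^s}-1\bigr)$. You instead bound $|t^{p^s}-\om(\al)|_p$ directly by induction, using only the fixed-point property $\om(\al)^p=\om(\al)$ and the binomial theorem, with no external lemma and no case split on $\al$. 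Both rest on the same underlying phenomenon (the $p$-th power map is a contraction modulo $p$), but your version is more direct, replaces the black-boxed inequality with a two-line binomial estimate, and produces the explicit uniform rate $p^{-(s+1)}$, which is slightly stronger than what the paper records.
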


\begin{proof}
We use the ``fundamental inequality'' from \cite[II.4.3]{Ro}: if $|t|_p\leq 1$, then
$|(1+t)^{p^s}-1|_p\leq |t|_p\cdot\max(|t|_p,1/p)^s$. Now 
let  $t\in   D_{\al,1}$\,.\ Then $t^{p-1} = 1 + t_1$, $|t_1|_p\leq 1/p$.
We have  $|t^{p^{s+1}}-t^{p^s}|_p =|t^{p^s}|_p |t^{(p-1)p^s}-1|_p$
$\leq |(1+t_1)^{p^s}-1|_p \leq 1/p^{s+1}$.

For positive integers $s_1,s_2$ and $t\in  D_{\al,1}$ we have
$|t^{p^{s_1+s_2}}-t^{p^{s_1}}|_p 
= |t^{p^{s_1+s_2}}- t^{p^{s_1+s_2-1}} +t^{p^{s_1+s_2-1}}
+\dots +
  t^{p^{s_1+1}}-
  t^{p^{s_1}}|_p \leq 1/p^{s_1+1}$. Hence 
the sequence $(x^{p^s})_{s=1}^\infty$ is a Cauchy sequence. 

For $t\in D_{0,1}$, we have  $|t^{p^s}|_p\leq 1/p^s$.
For $t_1,t_2 \in D_{\al,1},$ $\al\ne 0$,  we have $t_1/t_2 =1+t$ with
$|t|_p\leq 1/p$ and
$|t_1^{p^s}-t_2^{p^s}|_p = |(1+t)^{p^s}-1|_p \leq 1/p^{s+1}$.
The lemma is proved.
\end{proof}

For $\al\in \F_p$ consider the sequence of polynomial functions
$(x^{(p^s-1)/2})_{s=1}^\infty$ on $D_{\al,1}$.
This sequence uniformly converges to $0$ on the disc $D_{0,1}$.

\vsk.2>
Let $\al^{(p-1)/2}= 1$. Let $\beta \in \F_p$ be such that $\beta^2=\al$.
The function $D_{\beta,1}\to  D_{\al,1}$, $t\mapsto t^2$, is  an analytic diffeomorphism.
The inverse function $D_{\al,1}\to D_{\beta,1}$ will be denoted by $x^{1/2}$. 
There are two square roots $\pm x^{1/2}$. The root $x^{1/2}$ 
corresponds to the chosen $\beta\in\F_p$ and the root $-x^{1/2}$ 
corresponds to $-\beta\in\F_p$. 

\vsk.2>

We change the variable $x$,  set $x=y^2$, and lift the sequence 
$(x^{(p^s-1)/2})_{s=1}^\infty$ to the sequence $(y^{p^s-1})_{s=1}^\infty$ of polynomial functions
on $D_{\beta,1}$.

\begin{lem}
\label{lem x=y2}
The sequence of polynomial functions
$(y^{p^s-1})_{s=1}^\infty$ uniformly converges on 
$D_{\beta,1}$ to the function $\om(\beta)/y$.
In other words, the sequence of polynomial functions
$(x^{(p^s-1)/2})_{s=1}^\infty$ uniformly converges on 
$D_{\al,1}$ to the function $\om(\beta) x^{-1/2}$.

\end{lem}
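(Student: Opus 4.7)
The plan is to deduce both equivalent forms of the lemma directly from the uniform convergence $y^{p^s}\to\om(\beta)$ already proved in Lemma \ref{lem xps}, combined with the observation that on the disc $D_{\beta,1}$ the variable $y$ is a $p$-adic unit, so division by $y$ preserves uniform convergence with the same rate.

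First I would note that the hypothesis $\al^{(p-1)/2}=1$ forces $\al\ne 0$ in $\F_p$, and hence $\beta\ne 0$ since $\beta^2=\al$. Consequently, for every $y\in D_{\beta,1}$ we have $y\equiv\beta\ (\on{mod}\,p)$ with $\beta\in\F_p^\times$, so $|y|_p=1$ and $1/y\in\Z_p^\times$. This is the only place where the hypothesis enters, and it is what rules out the pathological behavior one sees on $D_{0,1}$ in the preceding lemma.

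Next, applying Lemma \ref{lem xps} with $\al$ replaced by $\beta$ (which lies in $\F_p^\times\subset\F_p$, so the lemma applies), the sequence $(y^{p^s})_{s=1}^\infty$ converges uniformly on $D_{\beta,1}$ to $\om(\beta)$. Writing $y^{p^s-1}=y^{p^s}/y$ and using $|y|_p=1$, we obtain for every $y\in D_{\beta,1}$
\[
\bigl|\,y^{p^s-1}-\om(\beta)/y\,\bigr|_p \;=\; \bigl|\,y^{p^s}-\om(\beta)\,\bigr|_p\big/|y|_p \;=\; \bigl|\,y^{p^s}-\om(\beta)\,\bigr|_p,
\]
so the Cauchy estimate from the previous lemma transfers verbatim and yields the first assertion.

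For the second assertion, the map $y\mapsto x=y^2$ is the analytic diffeomorphism $D_{\beta,1}\to D_{\al,1}$ whose inverse is by definition the chosen branch $x\mapsto x^{1/2}$, so on $D_{\beta,1}$ one has $y=x^{1/2}$. Under this substitution $y^{p^s-1}=x^{(p^s-1)/2}$ and $\om(\beta)/y=\om(\beta)\,x^{-1/2}$, and uniform convergence is preserved by the diffeomorphism, giving the claim on $D_{\al,1}$. There is no real obstacle in the argument; the only thing that must be watched is the invertibility of $y$ on the disc, which is guaranteed precisely by the assumption that $\al$ is a nonzero quadratic residue modulo $p$.
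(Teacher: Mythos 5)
Your argument is correct and follows the same route as the paper, which simply states that the lemma follows from Lemma~\ref{lem xps}; you have just filled in the implicit details (dividing by the unit $y$ and transporting convergence along the diffeomorphism $y\mapsto y^2$).
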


\begin{proof}
The lemma follows from Lemma \ref{lem xps}.
\end{proof}

\vsk.2>
Let $\al^{(p-1)/2}=-1$. Then 
$\om(\al)^{(p^{s}-1)/2}= (-1)^{1+p+\dots+p^{s-1}}=(-1)^s$ and
the sequence $(x^{(p^s-1)/2})_{s=1}^\infty$ has no limit on   $D_{\al,1}$.

\subsection{Approximation of binomial coefficients}
It is known that $\binom{-\frac{1}2}{a}, \binom{-\frac{3}2}{a}\in\Z_p$ for $a\in\Z_{\geq 0}$.

\begin{lem}
\label{lem BIN}
Let $l_1\geq 0,\,l_2\geq 0$\,
be integers. Then 
there exists an integer $s_0\geq 0,$ such that 
for any integer $s\geq s_0$ and any integer $a$ with 
 $\frac{p^s-1}2-l_1\geq l_2+ a\geq 0$ we have
\bean
\label{BIN}
\phantom{aaa}
\left|\binom{-\frac12-l_1}{l_2+a} - \binom{\frac{p^s-1}2-l_1}{l_2+a} \right|_p\leq 1/p^{s-d-a}\,,
\qquad \on{where}\quad
d=l_1+l_2-1/2.
\eean

\end{lem}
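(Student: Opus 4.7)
The plan is to expand each binomial as a falling factorial, $\binom{N}{k} = P(N)/k!$ with $P(N) = N(N-1)\cdots(N-k+1)$, and estimate the difference $P(N_2) - P(N_1)$ by telescoping. Set $N_1 = -\tfrac12 - l_1$, $N_2 = \tfrac{p^s-1}{2} - l_1$, and $k = l_2 + a$. The key observation is
\begin{equation*}
N_2 - N_1 \,=\, \tfrac{p^s-1}{2} + \tfrac12 \,=\, \tfrac{p^s}{2},
\end{equation*}
so $v_p(N_2 - N_1) = s$ since $p$ is odd. In particular $N_2 \equiv N_1 \pmod{p^s}$ in $\Z_p$.

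The first step is to apply the standard telescoping identity
\begin{equation*}
P(N_2) - P(N_1) \,=\, \sum_{i=0}^{k-1} \Bigl(\prod_{j=0}^{i-1}(N_2 - j)\Bigr)\,(N_2 - N_1)\,\Bigl(\prod_{j=i+1}^{k-1}(N_1 - j)\Bigr).
\end{equation*}
Under the hypothesis $\tfrac{p^s-1}{2} - l_1 \geq k$, each $N_2 - j$ with $0 \leq j < k$ is a positive integer, hence lies in $\Z_p$. Each $N_1 - j = -(2 l_1 + 2 j + 1)/2$ is half of an odd integer, so also lies in $\Z_p$ since $p$ is odd. Therefore every summand above is divisible by $N_2 - N_1 = p^s/2$ in $\Z_p$, yielding $v_p(P(N_2) - P(N_1)) \geq s$.

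To finish, divide by $k! = (l_2 + a)!$. For $k \geq 1$, Legendre's formula combined with $p \geq 3$ gives
\begin{equation*}
v_p(k!) \,\leq\, \frac{k-1}{p-1} \,\leq\, \frac{l_2 + a - 1}{2} \,\leq\, l_1 + l_2 + a - \tfrac12 \,=\, d + a,
\end{equation*}
the last inequality being equivalent to $0 \leq 2 l_1 + l_2 + a$, which always holds. The case $k = 0$ is trivial since the difference vanishes. Combining the two estimates gives $v_p\bigl(\binom{N_2}{k} - \binom{N_1}{k}\bigr) \geq s - (d+a)$, which is exactly the claim.

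There is no serious obstacle: the telescoping is elementary and the valuation bookkeeping is automatic once one notices $N_2 \equiv N_1 \pmod{p^s}$ in $\Z_p$. The role of $s_0$ is purely to guarantee that the range $\tfrac{p^s-1}{2} - l_1 \geq l_2 + a \geq 0$ is nonempty, so one may take any $s_0$ with $\tfrac{p^{s_0}-1}{2} \geq l_1 + l_2$; the inequality itself then holds uniformly in $s \geq s_0$.
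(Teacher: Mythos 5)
Your proof is correct and takes a genuinely different route from the paper's. You telescope the falling factorial $P(N_2)-P(N_1)$ and observe the key fact $N_2-N_1 = p^s/2$, which immediately gives $v_p\bigl(P(N_2)-P(N_1)\bigr)\geq s$ since every other factor in the telescoping sum lies in $\Z_p$; you then control the denominator via Legendre's bound $v_p(k!)\leq (k-1)/(p-1)\leq (k-1)/2$. The paper instead writes both binomials as $\frac{1}{(-2)^{k}k!}\prod_{j=1}^{k}(2(l_1+j)-1)$ and $\frac{1}{(-2)^{k}k!}\prod_{j=1}^{k}(2(l_1+j)-1-p^s)$, expands the difference of products by multilinearity so that each nonzero summand carries at least one explicit factor $-p^s$, uses $\bigl|\binom{-1/2-l_1}{k}\bigr|_p\leq 1$, and bounds the valuation $c=v_p(2(l_1+j)-1)$ of the factor that was removed via the crude inequality $c<p^c/2\leq (2(l_1+j)-1)/2\leq l_1+l_2+a-\tfrac12$. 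Both arrive at the same exponent $s-d-a$, but the ingredients differ: you rely on Legendre's formula for $v_p(k!)$, while the paper relies on the $p$-adic integrality of the binomial coefficient $\binom{-1/2-l_1}{k}$ together with a pointwise valuation bound on odd integers. Your route is arguably cleaner — the congruence $N_2\equiv N_1\pmod{p^s\Z_p}$ is the natural heart of the matter, and your telescoping makes it explicit — and it also makes transparent that no condition on $s$ is needed beyond nonemptiness of the range, whereas the paper's write-up inserts a "for $s$ large enough" clause to ensure $c\leq s$.
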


\begin{proof}
We have
\bea
\binom{-\frac12-l_1}{l_2+a}
& =& \frac1{(-2)^{l_2+a}(l_2+a)!}\, \prod_{k=1}^{l_2+a} (2(l_1+k)-1),
\\
\binom{\frac{p^s-1}2-l_1}{l_2+a} 
&=&
 \frac1{(-2)^{l_2+a}(l_2+a)!} \,\prod_{k=1}^{l_2+a} (2(l_1+k)-1-p^s)\,.
\eea
The $p$-adic norm of $\binom{-\frac12-l_1}{l_2+a}$ is $\leq 1$.
The difference $\binom{-\frac12-l_1}{l_2+a} - \binom{\frac{p^s-1}2-l_1}{l_2+a}$ is the sum of products
$\frac1{(-2)^{l_2+a}(l_2+a)!} \,\prod_{k=1}^{l_2+a} (2(l_1+k)-1)$ in each of which at least one of the factors
$2(l_1+k)-1$ is replaced with $-p^s$. We prove that even one such replacement 
implies that this summand of the difference has $p$-adic norm
$\leq  1/p^{s+1/2-l_1-l_2-a}$.

Indeed, let $2(l_1+k)-1 = bp^c$, $p \not\vert b$. We have
$2(l_1+k)-1\leq 2(l_1+l_2+a)-1\leq 2(l_1 + \frac{p^s-1}2) -1$.
Hence $bp^c \leq 2l_1 -2+p^s$. Hence for any $s$, large enough, we have $c\leq s$,
and the  replacement of
$2(l_1+k)-1$ with $-p^s$ makes the norm of that summand $\leq 1/p^{s-c}$.

We also have $bp^c\leq 2(l_1+l_2+a)-1$. Hence
$c\leq p^{c-1}<\frac{p^c}{2}\leq \frac{bp^c}{2}\leq a+l_1+l_2-\frac{1}{2}$.
This shows that each summand
has the $p$-adic norm $1/\leq p^{s-c}\leq 1/p^{s+1/2-l_1-l_2-a}$.
The lemma is proved.
\end{proof}

\subsection{Example $n=3$, continuation}
\label{sec n=3 c}

Consider the formal power series
\bean
\label{g=1T}
T^1(x) 
&=&
 \sum_{a=0}^\infty \Big(\binom{-\frac32}{a}\binom{-\frac12}{a},
\binom{-\frac12}{a+1}\binom{-\frac32}{a},
\binom{-\frac12}{a+1}\binom{-\frac12}{a}\Big) \,x^a
\eean
in \eqref{hat g=1 s} and the sequence of polynomials 
\bean
\label{g=1P}
T^{1,s}(x) 
=\,
 \sum_{a=0}^{\frac{p^s-1}2}\Big(\binom{\frac{p^s-3}2}{a}\binom{\frac{p^s-1}2}{a},
\binom{\frac{p^s-1}2}{a+1}\binom{\frac{p^s-3}2}{a},
\binom{\frac{p^s-1}2}{a+1}\binom{\frac{p^s-1}2}{a}\Big) \,x^a
\eean
in \eqref{g=1 s}  as functions on $\Z_p$.

\begin{prop}
\label{prop convP g=1}

The power series $T^1(x)$ uniformly convergence  on $D_{0,1}$.
The sequence of polynomial functions
$(T^{1,s}(x))_{s=1}^\infty$ uniformly converges on $D_{0,1}$ to the 
 function $T^1(x)$.

\end{prop}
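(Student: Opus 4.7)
The plan is to exploit that $D_{0,1}=p\Z_p$, so every $x\in D_{0,1}$ satisfies $|x|_p\leq 1/p$, and that every binomial coefficient appearing in \eqref{g=1T} and \eqref{g=1P} lies in $\Z_p$. The first statement is then immediate: each component of the coefficient vector of $x^a$ in \eqref{g=1T} is a product of two binomials in $\Z_p$, hence has $p$-adic norm at most $1$, while $|x^a|_p\leq p^{-a}$ on $D_{0,1}$. Thus $T^1(x)$ converges uniformly on $D_{0,1}$, with the tail from degree $A$ onwards bounded in each component by $p^{-A}$.

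For the second statement, I would split
\be
T^1(x)-T^{1,s}(x) \,=\, \sum_{a\geq (p^s-1)/2}c_a\,x^a \,+\, \sum_{a=0}^{(p^s-3)/2}(c_a-c_a^s)\,x^a,
\ee
where $c_a$ and $c_a^s$ denote the (vector) coefficients of $x^a$ in $T^1$ and in $T^{1,s}$ respectively. The ``tail'' (first) sum has each component of $p$-adic norm at most $p^{-(p^s-1)/2}$ on $D_{0,1}$, since each $c_a\in\Z_p^3$ and $|x^a|_p\leq p^{-(p^s-1)/2}$ whenever $a\geq(p^s-1)/2$; in particular this contribution tends to $0$ uniformly in $x$ as $s\to\infty$.

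For the ``middle'' sum, the key input is Lemma \ref{lem BIN} applied to the pairs $(l_1,l_2)\in\{(0,0),(1,0),(0,1)\}$, which cover the three binomial types $\binom{-1/2}{a}$, $\binom{-3/2}{a}$ and $\binom{-1/2}{a+1}$ occurring in \eqref{g=1T}. For $s$ large enough, it yields
\be
\left|\tbinom{-1/2-l_1}{l_2+a}-\tbinom{(p^s-1)/2-l_1}{l_2+a}\right|_p\leq p^{-(s-d-a)}
\ee
with $d=l_1+l_2-1/2$ uniformly bounded. Since each entry of $c_a$ and of $c_a^s$ is a product of two such binomials, all of $p$-adic norm $\leq 1$, expanding the difference of products as a two-term telescoping sum gives a componentwise bound $|c_a-c_a^s|_p\leq p^{-(s-D-a)}$ for a uniform constant $D$. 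Multiplying by $|x^a|_p\leq p^{-a}$ cancels the $a$-dependence and bounds the middle sum uniformly in $x\in D_{0,1}$ by $p^{-(s-D)}$, which tends to $0$.

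The only nuisance is bookkeeping: checking that a single threshold $s_0$ in Lemma \ref{lem BIN} works simultaneously for the finitely many pairs $(l_1,l_2)$ involved, and tracking the index shift $a\mapsto a+1$ in two of the three binomial factors. Both are routine. Combining the tail and middle estimates then yields uniform convergence of $(T^{1,s}(x))_{s=1}^\infty$ to $T^1(x)$ on $D_{0,1}$, as required.
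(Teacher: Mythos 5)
Your proposal is correct and follows essentially the same route as the paper: the same split of $T^1 - T^{1,s}$ into the tail $\sum_{a\geq(p^s-1)/2}$ and the truncated part, the tail killed by $|x|_p\leq 1/p$ on $D_{0,1}$ together with integrality of the coefficients, and the truncated part controlled by Lemma \ref{lem BIN} with the factor $|x^a|_p\leq p^{-a}$ absorbing the $a$-dependence in the error bound. You simply spell out a couple of steps the paper leaves implicit (the telescoping estimate for a difference of products of binomials, and the use of $D_{0,1}=p\Z_p$).
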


\begin{proof} 
The fact that the binomials 
 $\binom{-\frac{1}2}{a}, \binom{-\frac{3}2}{a}$ are $p$-adic integers
  implies the uniform convergence of the power series $T^1(x)$ on $D_{0,1}$.

Let us write $T^1(x) = \sum_{a=0}^\infty T^1_a x^a$
and
 $T^{1,s}(x) = \sum_{a=0}^{\frac{p^s-1}2}
 T^{1,s}_a x^a$, where $T^{1}_a, T^{1,s}_a \in \Z_p^3$. 
  Then 
 $T^1(x) - T^{1,s}(x)= \sum_{a=\frac{p^s+1}2}^\infty T^{1}_a \,x^a +
 \sum_{a=0}^{\frac{p^s-1}2} (T^{1}_a-T^{1,s}_a) \,x^a$.
Clearly the $p$-adic norm of the first sum is $\leq 1/p^{\frac{p^s-1}2}$. 
 By Lemma \ref{lem BIN} if $s$ is big enough, then
 for each summand of the second sum and $t\in D_{0,1}$ we have
 $|(T^{1}_a-T^{1,s}_a) \,t^a|_p\leq 1/p^{s-d}$ for some $d$ independent of $s$ and of the summand. 
 This proves the proposition.
 \end{proof}

Consider the formal series 
\bean
\label{hat g=1 sc}
\hat I^{(\ga_1)}(u_1, u_2) 
&=&
 u_1^{-3/2}  \sum_{a=0}^\infty \Big(\binom{-\frac32}{a}\binom{-\frac12}{a},
\binom{-\frac12}{a+1}\binom{-\frac32}{a},
\binom{-\frac12}{a+1}\binom{-\frac12}{a}\Big) \,u_2^a
\eean
and the sequence of polynomials
\bean
\label{g=1 sc}
&&
 (-1)^{\frac{p^s-3}2} \hat I^{[p^s-1]}(u_1,u_2)  =
\\
\notag
&&
=\,
u_1^{\frac{p^s-3}2} 
 \sum_{a=0}^{\frac{p^s-1}2}\Big(\binom{\frac{p^s-3}2}{a}\binom{\frac{p^s-1}2}{a},
\binom{\frac{p^s-1}2}{a+1}\binom{\frac{p^s-3}2}{a},
\binom{\frac{p^s-1}2}{a+1}\binom{\frac{p^s-1}2}{a}\Big) \,u_2^a
\eean
 as functions on $D_{\al,1}\times D_{0,1}$, where
$\al = \beta^2$ for some $\beta\in\F_p$. Then the function $u_1^{1/2} : 
D_{\al,1}\to D_{\beta,1}$ is well-defined
and the series $\hat I^{(\ga_1)}(u_1, u_2) $ is a well-defined function on  $D_{\al,1}\times D_{0,1}$.

\begin{thm}
\label{thm 10.3}

The sequence of polynomial functions 
$\big( (-1)^{\frac{p^s-3}2} \hat I^{[p^s-1]}(u_1,u_2)\big)_{s=1}^\infty$ uniformly converges on 
$D_{\al,1}\times D_{0,1}$ to the function 
$\om(\beta) \hat I^{(\ga_1)}(u_1, u_2) $\,.

\end{thm}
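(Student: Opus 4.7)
The plan is to reduce the theorem to a product of two $p$-adic limits that have already been established: Lemma \ref{lem x=y2} for the prefactor and Proposition \ref{prop convP g=1} for the power series in $u_2$.

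First I would rewrite the relevant quantities to separate the two variables. From formula \eqref{hat g=1 sc} and \eqref{g=1 sc},
\begin{align*}
\hat I^{(\ga_1)}(u_1,u_2) &= u_1^{-1}\cdot u_1^{-1/2}\cdot T^1(u_2),\\
(-1)^{(p^s-3)/2}\hat I^{[p^s-1]}(u_1,u_2) &= u_1^{-1}\cdot u_1^{(p^s-1)/2}\cdot T^{1,s}(u_2),
\end{align*}
so the difference factors as
\[
(-1)^{(p^s-3)/2}\hat I^{[p^s-1]}(u_1,u_2)-\om(\beta)\hat I^{(\ga_1)}(u_1,u_2)
=u_1^{-1}\Bigl(u_1^{(p^s-1)/2}T^{1,s}(u_2)-\om(\beta)u_1^{-1/2}T^1(u_2)\Bigr).
\]
Since $\al\neq 0$ (it is a square and a $(p-1)/2$-th root of unity in $\F_p$), for $u_1\in D_{\al,1}$ we have $|u_1|_p=1$, so the factor $u_1^{-1}$ is well defined with norm $1$ and can be ignored for questions of uniform convergence. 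It therefore suffices to prove that
\[
u_1^{(p^s-1)/2}T^{1,s}(u_2)\longrightarrow \om(\beta)\,u_1^{-1/2}T^1(u_2)
\]
uniformly on $D_{\al,1}\times D_{0,1}$.

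Next I would invoke the two pieces. By Lemma \ref{lem x=y2}, the sequence $u_1^{(p^s-1)/2}$ converges uniformly on $D_{\al,1}$ to $\om(\beta)\,u_1^{-1/2}$; in particular both the sequence and the limit are uniformly bounded in norm (by $1$, in fact, since $|u_1|_p=1$ and $|\om(\beta)|_p=1$). By Proposition \ref{prop convP g=1}, the sequence $T^{1,s}(u_2)$ converges uniformly on $D_{0,1}$ to $T^1(u_2)$, and both the sequence and limit are bounded in norm by $1$ since their coefficients lie in $\Z_p^3$ and $|u_2|_p<1$.

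Finally I would conclude by the standard product-of-convergents argument in a non-archimedean setting. Write
\[
u_1^{(p^s-1)/2}T^{1,s}(u_2)-\om(\beta)u_1^{-1/2}T^1(u_2)
=\bigl(u_1^{(p^s-1)/2}-\om(\beta)u_1^{-1/2}\bigr)T^{1,s}(u_2)+\om(\beta)u_1^{-1/2}\bigl(T^{1,s}(u_2)-T^1(u_2)\bigr),
\]
and take the $\sup$ norm on $D_{\al,1}\times D_{0,1}$. Using the ultrametric inequality, each term is bounded by the product of a uniformly convergent-to-zero factor (from Lemma \ref{lem x=y2} or Proposition \ref{prop convP g=1}) and a uniformly bounded factor, so both terms tend to $0$ uniformly as $s\to\infty$. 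This yields the claimed uniform convergence. The only real subtlety is the bookkeeping of the prefactor $u_1^{(p^s-3)/2}$ versus $u_1^{-3/2}$ and justifying that the extra $u_1^{-1}$ is a bounded unit on $D_{\al,1}$; everything else is a direct combination of results already proved.
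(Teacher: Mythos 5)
Your proof is correct and follows the same route as the paper, which simply cites Lemma \ref{lem x=y2} and Proposition \ref{prop convP g=1}; you have merely spelled out the factorization $u_1^{(p^s-3)/2} = u_1^{-1}u_1^{(p^s-1)/2}$, the unit bound on $D_{\al,1}$, and the standard ultrametric product-of-limits argument that the paper leaves implicit.
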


\begin{proof}
The theorem follows from Lemma \ref{lem x=y2} and Proposition \ref{prop convP g=1} .
\end{proof}

\subsection{$p$-Adic convergence for arbitrary $n$}
\label{sec nab}

Given $l$,  $1\leq l \leq g$, consider  the sequence of polynomials
$(-1)^{\delta_l} \hat I^{[lp^s-1]}_{p^s}(u)$ and the series $\hat I^{(\ga_l)}(u)$. 
Here $u=(u_1,\dots,u_{n-1})$.

\vsk.2>
We multiply
the polynomials and the series  by the same factor $(u_1\cdots u_{n-2l})^{l}=(z_{n-2l}-z_n)^l$ and study the convergence
of the sequence of polynomials 
$J^{l,s}:= (u_1\cdots u_{n-2l})^{l}(-1)^{\delta_l} \hat I^{[lp^s-1]}_{p^s}(u)$
to the series $J^l:=(u_1\cdots u_{n-2l})^{l} \hat I^{(\ga_l)}(u)$. Introduce new variables\,:
\bea
&&
x_1= \prod_{i=1}^{n-2l}\prod_{j=1}^iu_j =\prod_{i=1}^{n-2l}(z_i-z_n)\,, 
\qquad
x_2=u_2\cdots u_{n-2l} =\frac{z_{n-2l}-z_n}{z_1-z_n}\,,
\\
&&
  x_3=u_3\cdots u_{n-2l}=\frac{z_{n-2l}-z_n}{z_2-z_n}, \quad
  \dots \quad
x_{n-2l}=u_{n-2l}=\frac{z_{n-2l}-z_n}{z_{n-2l-1}-z_n}\,,
\\
&&
x_{n-2l+1}=u_{n-2l+1}=\frac{z_{n-2l+1}-z_n}{z_{n-2l}-z_n},\qquad
x_{n-2l+2}=u_{n-2l+1}u_{n-2l+2} = \frac{z_{n-2l+2}-z_n}{z_{n-2l}-z_n},
\\
&&
\dots \quad
x_{n-1}=u_{n-2l+1}\cdots u_{n-1}=\frac{z_{n-1}-z_n}{z_{n-2l}-z_n}\,.
\eea
Let $x=(x_1,\dots,x_{n-1})$.  Then
\bea
J^l(x) 
=
x_1^{-\frac12}\,Q^l(x_2,\dots,x_{n-1})= x_1^{-\frac12} (Q^l_1(x_2,\dots,x_{n-1}),\dots,Q^l_n(x_2,\dots,x_{n-1})),
\eea
with coordinates $Q^l_j$ defined as follows.
If $j=1,\dots, n-1$, then
\bean
\label{J_j}
&&
\\
\notag
Q^l_j
&=&
 \,x_{j+1} \, {\sum}^{l,j} \,\binom{\frac{-3}2}{a_{j}}
\,\prod_{i=1, \,i\ne j}^{n-1} \binom{\frac{-1}2}{a_i}
\,
 \prod_{i=2}^{n-2l}x_{i}^{a_{i-1}}
\!\!\!\!\!  \prod_{i=n-2l+1}^{n-1} \!\!\! x_{i}^{a_i}\,,\quad j=1,\dots, n-2l-1,
\\
\notag
Q^l_j
&=&
 {\sum}^{l,j} \,\binom{\frac{-3}2}{a_{j}}
\,\prod_{i=1, \,i\ne j}^{n-1} \binom{\frac{-1}2}{a_i}
\,
 \prod_{i=2}^{n-2l}x_{i}^{a_{i-1}}
\!\!\!\!\!
  \prod_{i=n-2l+1}^{n-1}\!\!\!  x_{i}^{a_i}\,,\quad 
  \qquad
  j=n-2l,\dots, n-1,
 \eean
where the summation ${\sum}^{l,j}$
is over all $a_1,\dots,a_{n-1}\in\Z_{\geq 0}$
such that
$a_1+\dots+a_{n-2l}=a_{n-2l+1}+\dots+a_{n-1} + l-1$, if $j\leq n-2l$;
and such that
$a_1+\dots+a_{n-2l}=a_{n-2l+1}+\dots+a_{n-1} + l$, if $ n-2l<j\leq n-1$;
\bean
\label{Jn}
&&
\phantom{aaa}
Q^{l}_n = \, {\sum}^{l,n} \,\,\prod_{i=1}^{n-1}
\binom{\frac{-1}2}{a_i}
\,
 \prod_{i=2}^{n-2l}x_{i}^{a_{i-1}}
\!\!\!\!\!  \prod_{i=n-2l+1}^{n-1}\!\!\!  x_{i}^{a_i}\,,
\eean
where the summation ${\sum}^{l,n}$
is over all $a_1,\dots,a_{n-1}\in\Z_{\geq 0}$
such that 
$a_1+\dots+a_{n-2l}=a_{n-2l+1}+\dots+a_{n-1} + l$.

We also have
\bea
J^{l,s}(x) 
=
x_1^{\frac{p^s-1}2}\,Q^{l,s}(x_2,\dots,x_{n-2})= x_1^{\frac{p^s-1}2} (Q^{l,s}_1(x_2,\dots,x_{n-2}),
\dots,Q^{l,s}_n(x_2,\dots,x_{n-2})),
\eea
with coordinates $Q^{l,s}_j(x)$ defined as follows.
If $j=1,\dots, n-1$, then
\bean
\label{J_js}
&&
\\
\notag
Q^{l,s}_j 
&=&
 \,x_{j+1} \, {\sum}^{l,j} \,\binom{\frac{p^s-3}2}{a_{j}}
\,\prod_{i=1, \,i\ne j}^{n-1} \binom{\frac{p^s-1}2}{a_i}
\,
 \prod_{i=2}^{n-2l}x_{i}^{a_{i-1}}
\!\!\!\!\!  \prod_{i=n-2l+1}^{n-1} \!\!\! 
x_{i}^{a_i}\,,\quad j=1,\dots, n-2l-1,
\\
\notag
Q^{l,s}_j
&=&
 {\sum}^{l,j} \,\binom{\frac{p^s-3}2}{a_{j}}
\,\prod_{i=1, \,i\ne j}^{n-1} \binom{\frac{p^s-1}2}{a_i}
\,
 \prod_{i=2}^{n-2l}x_{i}^{a_{i-1}}
\!\!\!\!\!  \prod_{i=n-2l+1}^{n-1}\!\!\!  x_{i}^{a_i}\,,\quad 
\qquad
j=n-2l,\dots, n-1,
 \eean
where the summation ${\sum}^{l,s,j}$
is over all $a_1,\dots,a_{n-1}\in\Z$,  $0\leq a_i\leq \frac{p^s-1}2$,
such that
$a_1+\dots+a_{n-2l}=a_{n-2l+1}+\dots+a_{n-1} + l-1$, if $j\leq n-2l$;
and such that
$a_1+\dots+a_{n-2l}=a_{n-2l+1}+\dots+a_{n-1} + l$, if $ n-2l<j\leq n-1$;
\bean
\label{Jns}
&&
\phantom{aaa}
Q^{l,s}_n = \, {\sum}^{l,n} \,\,\prod_{i=1}^{n-1}
\binom{\frac{p^s-1}2}{a_i}
\,
 \prod_{i=2}^{n-2l}x_{i}^{a_{i-1}}
\!\!\!\!\!   \prod_{i=n-2l+1}^{n-1} \!\!\!  x_{i}^{a_i}\,,
\eean
where the summation ${\sum}^{l,s,n}$
is over all $a_1,\dots,a_{n-1}\in\Z$,  $0\leq a_i\leq \frac{p^s-1}2$,
such that 
$a_1+\dots+a_{n-2l}=a_{n-2l+1}+\dots+a_{n-1} + l$.

\begin{prop}
\label{prop Q conv}

The power series $Q^l(x_2,\dots,x_{n-1})$ uniformly convergens on $D_{0,1}^{n-2}$.
The sequence of polynomial functions 
$(Q^{l,s}(x_2,\dots,x_{n-1}))_{s=1}^\infty$ uniformly converges on 
$D_{0,1}^{n-2}$ to the function $Q^l(x_2,\dots,x_{n-1})$.

\end{prop}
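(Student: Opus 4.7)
The plan is to adapt the argument of Proposition \ref{prop convP g=1} to the multi-index setting, using Lemma \ref{lem BIN} term-by-term and combining the resulting bounds via the ultrametric inequality. For $x = (x_2,\dots,x_{n-1}) \in D_{0,1}^{n-2}$ each $|x_i|_p \leq 1/p$, so setting $A = a_1 + \dots + a_{n-2l-1}$ and $B = a_{n-2l+1} + \dots + a_{n-1}$, the monomial appearing in \eqref{J_j}--\eqref{Jns} has $p$-adic norm at most $1/p^{A+B}$. Every binomial coefficient of the form $\binom{-1/2-l_1}{a}$ or $\binom{(p^s-1)/2-l_1}{a}$ lies in $\Z_p$, so their products have norm at most $1$. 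The linear constraint $A + a_{n-2l} = B + l$ (or $B + l - 1$) together with $a_i \geq 0$ implies that the number of multi-indices with fixed $A + B = N$ is polynomial in $N$, so with the bound $1/p^{A+B}$ on each term the series $Q^l$ converges uniformly and absolutely on $D_{0,1}^{n-2}$.

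For the sequence convergence, I would decompose $Q^l - Q^{l,s}$ into a tail, consisting of multi-indices in $Q^l$ with some $a_i > (p^s-1)/2$, and a truncated sum, over multi-indices with all $a_i \leq (p^s-1)/2$ where only the coefficient differs. The tail is a sub-sum of $Q^l$, and the constraint forces $A + B \geq (p^s-1)/2 - l$ on every tail multi-index: for $i \neq n - 2l$ we have $a_i \leq A + B$ directly, and for $i = n - 2l$ the relation $B = A + a_{n-2l} - l \geq a_{n-2l} - l$ suffices. Hence the tail has uniform norm at most $1/p^{(p^s-1)/2 - l}$, which tends to $0$.

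For the truncated sum I would expand the difference of products of binomials as a telescoping sum in which each summand contains at least one factor of the form $\binom{-1/2-l_1}{a_i} - \binom{(p^s-1)/2-l_1}{a_i}$; by Lemma \ref{lem BIN} this factor has $p$-adic norm at most $1/p^{s - d - a_i}$ for a constant $d$ independent of $s$ and of the multi-index, while the remaining factors have norm at most $1$. Multiplying by the monomial bound yields a per-summand norm of at most $1/p^{s - d - a_i + A + B}$; the exponent is $\geq s - d$ when $i \neq n - 2l$ (using $a_i \leq A + B$), and $\geq s - d - l$ when $i = n - 2l$ (using $A + B = 2A + a_{n-2l} - l$). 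The ultrametric inequality then gives $|Q^l(x) - Q^{l,s}(x)|_p \leq 1/p^{s - d'}$ uniformly in $x$ for some constant $d'$, so $Q^{l,s} \to Q^l$ uniformly on $D_{0,1}^{n-2}$.

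The main subtlety is that the index $a_{n - 2l}$ does not appear in any monomial, so the binomial $\binom{-1/2}{a_{n - 2l}}$ is not directly tamed by smallness of the $x$-variables; in both the tail bound and the truncated-sum bound this is handled by invoking the linear constraint to force $A + B$ to be comparable to $a_{n - 2l}$.
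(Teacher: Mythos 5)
Your proof is correct and follows essentially the same route as the paper's: a tail/truncation decomposition, a trivial ultrametric bound on the tail, and Lemma \ref{lem BIN} applied to the truncated sum, exactly as in Proposition \ref{prop convP g=1}. Your explicit treatment of the ``orphan'' index $a_{n-2l}$ via the linear constraint is precisely the multi-index detail the paper leaves implicit when it defers to the $n=3$ case.
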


\begin{proof}
The fact that the binomials 
 $\binom{-\frac{1}2}{a}, \binom{-\frac{3}2}{a}$ are $p$-adic integers
  implies the uniform convergence of the power series $Q^l(x_2,\dots,x_{n-1})$ on $D_{0,1}^{n-2}$.
The proof of the uniform convergence of
$(Q^{l,s}(x_2,\dots,x_{n-1}))_{s=1}^\infty$\,to\, $Q^l(x_2,\dots,x_{n-1})$
follows from Lemma \ref{lem BIN} in the same way as the uniform convergence in the proof of Proposition \ref{prop convP g=1}.
\end{proof}

Consider the formal series $J^l(x)=x_1^{-\frac12}\,Q^l(x_2,\dots,x_{n-1})$
and the sequence of polynomials 
$J^{l,s}(x) 
=
x_1^{\frac{p^s-1}2}\,Q^{l,s}(x_2,\dots,x_{n-2})$
as functions on $D_{\al,1}\times D_{0,1}^{n-2}$, where
$\al = \beta^2$ for some $\beta\in\F_p$. Then the function $x_1^{1/2} : D_{\al,1}\to D_{\beta,1}$ is well-defined
and the series $J^l(x)$ is a well-defined function on  $D_{\al,1}\times D_{0,1}^{n-2}$.

\begin{thm}
\label{thm last}

The sequence of polynomial functions 
$\big(J^{l,s}(x)\big)_{s=1}^\infty$ uniformly converges on 
$D_{\al,1}\times D_{0,1}^{n-2}$ to the function 
$\om(\beta) J^{l}(x)$\,.

\end{thm}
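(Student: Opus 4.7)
The plan is to factor the difference $J^{l,s}(x) - \om(\beta) J^{l}(x)$ into two pieces, each controlled by a previously established uniform convergence result, using boundedness of the factors on the relevant discs.

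First, I would write
\begin{equation*}
J^{l,s}(x) - \om(\beta) J^{l}(x) \,=\, x_1^{(p^s-1)/2}\bigl(Q^{l,s}(x_2,\dots,x_{n-1}) - Q^{l}(x_2,\dots,x_{n-1})\bigr) \,+\, \bigl(x_1^{(p^s-1)/2} - \om(\beta) x_1^{-1/2}\bigr)\,Q^{l}(x_2,\dots,x_{n-1}).
\end{equation*}
This is the standard trick for handling a product of two convergent sequences. The strong triangle inequality then gives, for $x = (x_1,\dots,x_{n-1}) \in D_{\al,1}\times D_{0,1}^{n-2}$,
\begin{equation*}
|J^{l,s}(x) - \om(\beta) J^{l}(x)|_p \le \max\bigl(|x_1^{(p^s-1)/2}|_p\,\|Q^{l,s}-Q^{l}\|,\ \|x^{(p^s-1)/2}-\om(\beta) x^{-1/2}\|\,\|Q^{l}\|\bigr),
\end{equation*}
where the sup-norms are taken over $D_{\al,1}$ and $D_{0,1}^{n-2}$ respectively.

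Next I would verify the four ingredients that make the right-hand side small uniformly in $x$. (i) Since $\al \ne 0$, every $x_1 \in D_{\al,1}$ satisfies $|x_1|_p = 1$, so $|x_1^{(p^s-1)/2}|_p = 1$ is uniformly bounded. (ii) By Proposition \ref{prop Q conv}, $\|Q^{l,s}-Q^{l}\| \to 0$ as $s\to\infty$. (iii) The coefficients of the power series $Q^{l}(x_2,\dots,x_{n-1})$ are products of the binomials $\binom{-1/2}{a_i}$ and $\binom{-3/2}{a_j}$, which lie in $\Z_p$; hence on $D_{0,1}^{n-2}$ one has $\|Q^{l}\| \le 1$. (iv) By Lemma \ref{lem x=y2}, $\|x^{(p^s-1)/2} - \om(\beta) x^{-1/2}\| \to 0$ as $s\to\infty$ on $D_{\al,1}$. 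Combining these four facts shows that both terms on the right of the displayed inequality tend to $0$ uniformly in $x$, which is the claim.

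There is no real obstacle: once Proposition \ref{prop Q conv} and Lemma \ref{lem x=y2} are in hand, the argument is the standard ``product of uniformly convergent, uniformly bounded sequences'' statement in the $p$-adic setting, where the strong triangle inequality makes the estimate even cleaner than over $\R$. The only point requiring a line of care is the observation that $|x_1|_p = 1$ on $D_{\al,1}$ whenever $\al \ne 0$, which is what allows $x_1^{1/2}$ to be defined via the Teichmuller lift of $\beta$ and what makes the prefactor estimate trivial.
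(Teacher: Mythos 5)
Your proof is correct and follows the same route as the paper: the paper's own proof simply states that the theorem follows from Lemma \ref{lem x=y2} and Proposition \ref{prop Q conv}, and your argument supplies precisely the standard product-decomposition and ultrametric estimates that make that deduction explicit.
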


\begin{proof}
The theorem follows from Lemma \ref{lem x=y2} and Proposition \ref{prop Q conv}.
\end{proof}

\appendix

\section{ { {The case $n=3$ and Dwork's theory}}}
\label{appendix}

\begin{center}
{ by  Steven Sperber
and Alexander Varchenko}
\end{center}

\bigskip

In this appendix we  consider only the special case  $n=3$ of  previous considerations and
 show how this special case is related to Dwork's theory
in the classical paper \cite{Dw}.

\subsection{Dwork on Legendre family}
\label{sec A1}

\subsubsection{} 

Consider the   Legendre family of elliptic curves  $E(\la)$ defined by the affine equation 
\bea
y^2=x(x-1)(x-\la).
\eea
 Let $\ga=\ga(\la)$ be a family of 1-cycles on
the curves $E(\la)$
 flat under the Gauss-Manin connection. Then the function  
\bean
\label{Kg}
{h}^{(\ga)}(\la) = \int_{\ga} 
\frac {dx}{y}
\eean
satisfies the hypergeometric differential equation
\bean
\label{HEa}
\la(1-\la) h'' +(1-2\la)h'-(1/4)h=0.
\eean
All solutions of this equation are obtained in this way.
One of the solutions $h^{(\delta_1)}(\la)$, for a suitable $\delta_1$, equals 
\bean
\label{Fps}
F(\la) = {}_2F_1\Big(\frac12,\frac12;1;\la\Big)=\sum_{k = 0}^\infty\binom{-1/2}{k}^2\la^k\,,
\eean
where ${}_2F_1$ is the classical hypergeometric function.

\subsubsection{} If $h(\la)$ is the elliptic integral in  \eqref{Kg}, then its derivative is 
\bean
\label{dKg}
h' = \frac12 \int_{\ga} \frac{dx}{(x-\la)  y}.
\eean

\subsubsection{}
Equation $\eqref{HEa}$ can be written as a system of first order linear differential equations for column 2-vectors
$I=(h, h')$,
\bean
\label{DDE}
\frac{d I}{d\la} = B(\la)I, \qquad 
B(\la)=
\begin{pmatrix}
 0 &    1
 \\
\frac{1}{4\la(1-\la)}   & \frac{2\la-1}{\la(1-\la)}
\end{pmatrix} .
\eean

\subsubsection{}

In \cite{Dw}
Dwork considers equation \eqref{HEa} over the field $\Q_p$ of 
$p$-adic numbers and studies analytic properties of the solution
$F(\la)$. We remind these properties below.

\subsubsection{}
Let $D\subset \Q_p$ be an open subset. A function  $f:D\to \Q_p$ is called analytic 
at $\al\in D$ if $f$ can be presented by a power series 
$\sum_{k=0}^\infty c_k(\la-\al)^k$ with nonzero radius of convergence.

\vsk.2>
A function  $f:D\to \Q_p$ is called analytic 
on $D$ if $f$ is the uniform limit on $D$ of a sequence of rational function
regular on $D$. In that case $f$ is analytic at every point of $D$.

\vsk.2>

For $i=1,\dots,N$ let $f_i:D_i\to\Q_p$ be an analytic function on some domain $D_i$.
Assume that $D_i\cap D_{i+1}\ne\emptyset$ for $i=1,\dots,N-1$,
and $f_i=f_{i+1}$ on $D_i\cap D_{i+1}$, then we say that the collection of functions 
$f_i$ defines an analytic element on $\cup_{i=1}^N D_i$. Cf. \cite[Section 0]{Dw}.

\subsubsection{}
Igusa noted in \cite{Ig} that modulo $p$ the polynomial
\bean
\label{Igu} 
g(\la) = \sum_{j=0}^{(p-1)/2} \binom{-1/2}{j}^2\la^j 
\eean
is the unique polynomial solution of equation \eqref{HEa} of degree less than $p$
up to multiplication by a constant.
Define 
\bea
\frak D_1 = \{\la\in \Z_p\ |\ |g(\la)|_p=1\},
\qquad
\frak D_2 = \{\la\ |\ \la^{-1} \in \frak D_1 \},
\qquad
\frak D= \frak D_1\cup\frak D_2.
\eea
Notice that $\frak D_1$, $\frak D_2$ are open and $\frak D_1\cap \frak D_2\ne\emptyset$. More precisely,
\bea
\fD_1\cap \fD_2 = \{\la\in\Z_p\ |\ |g(\la)|_p=1, \, |\la|_p = 1\}.
\eea

\subsubsection{} 
\label{sec A17}

Dwork considers the functions
\bean
\label{def etf}
f(\la) = \frac{F(\la)}{F(\la^p)},
\qquad
\eta(\la)=\frac{F'(\la)}{F(\la)}, 
\eean
defined in a neighborhood 
of $0\in\frak D_1$ as ratios of the corresponding convergent power series expansions. 

\vsk.2>
Dwork proves that $f(\la)$ can be analytically continued to the domain $\frak D_1$.
For that, he indicates a sequence of regular rational functions
 on $\frak D_1$, that sequence uniformly converges  on $\frak D_1$,
and its limit  equals $F(\la)/F(\la^p)$  in a neighborhood of $0$,
see \cite[Lemma 3.4]{Dw}.

\smallskip
From that Dwork deduces that $\eta(\la)$ has analytic continuation to the domain
$\frak D_1$ in the same sense, see  \cite[Lemma 3.1]{Dw}.

\vsk.2>
Since $\eta(\la)$ is analytic on $\frak D_1$, the function $\eta(1/\la)$ is analytic on  $\frak D_2$.

\vsk.2>
Using the properties of equation \eqref{HEa} Dwork shows that
$\eta(1-\la) = -\eta(\la)$ on $\frak D_1$ and shows that 
$\eta (\la) = -\eta(1/\la)/\la^2 -1/(2\la)$ on $\frak D_1\cap \frak D_2$.
Hence the function $\eta(\la)$ on $\frak D_1$ and the function
 $-\eta(1/\la)/\la^2 -1/(2\la)$ on $\frak D_2$
define an analytic element on $\frak D$. 

\vsk.2>
We will use the  formulas
\bean
\label{DD}
\eta(1-\la) = -\eta(\la),\qquad
\eta(1/\la) = -\la^2\eta(\la)-\la/2\,,
\eean
in Section \ref{sec A5}.

\subsubsection{} 
\label{sec A18}

For $\al\in \Q_p$ let $V_\al$ be the space of germs at $\al$ of holomorphic solutions of equation
\eqref{HEa}. For $\al\ne 0,1$ we have $\dim V_\al=2$  and for $\al=0,1$ we have
$\dim V_\al=1$.

\vsk.2>

For $\al\in\fD_1$ let $U_\al$ be the space of germs at $\al$ 
of analytic functions defined by the equation
\bean
\label{3.9}
\frac{du}{d\la} = \eta(\la) u\,.
\eean
By \cite[Lemma 3.2]{Dw}, \, $U_\al$ is a subspace of $V_\al$. 
We have  $U_\al=V_\al$ for $\al=0,1$.

For $\al\in\fD_2$ let $U_\al'$ be the space of germs at $\al$ 
of analytic functions defined by the equation
\bean
\label{4.9}
\frac{du}{d\la} = (-\eta(1/\la)/\la^2 -1/(2\la)) u\,.
\eean
By \cite[Lemma 3.2]{Dw}, \, $U_\al'$ is a subspace of $V_\al$. 
For $\al\in \fD_1\cap\fD_2$ we have $U_\al=U_\al'$.

\subsubsection{}
\label{sec A19}

By \cite[Lemma 4.2]{Dw}, for $\al\in \fD$\, the subspace $U_\al\subset V_\al$ also
can be characterized 
as the subspace of germs at $\al$ of holomorphic functions bounded in their disc of convergence.

\vsk.2>
More precisely, let $\C_p$ be the metric completion of the algebraic closure $\bar \Q_p$ of the field 
$\Q_p$. Let $\al\in \fD$. Let $u(\la) = \sum_{k=0}^\infty c_k(\la-\al)^k$ be an element of $V_\al$. 
Consider $u(\la)$ as a germ at $\al \in \Q_p\subset \C_p$ of an analytic function on $\C_p$.
The germ $u(\la)$ is called bounded on its disc of convergence
 if $u(\la)$ is bounded on its disc of convergence in $\C_p$.

Let $r$ be the radius of convergence of   $u(\la)=\sum_{k=0}^\infty c_k(\la-\al)^k$.
Define  $|u(\la)|_0  = \sup_k |c_k| r^k$. Then $u(\la)$
 is bounded in its disc of convergence 
 if and only if $|u(\la)|_0 < \infty$.

\subsubsection{} 
\label{sec A110}

The function $F(\la)$ is a holomorphic solution of equation \eqref{HEa} on the disc $D_{0,1}$.
A second solution of \eqref{DDE} is of the form $G(\la) = F(\la) \log \la + H(\la)$, where the function
$H(\la)$ is holomorphic on $D_{0,1}$. Dwork specifies $H(\la)$ by \cite[Equation (4.19)]{Dw}. Then
\bean
\label{U la}
\mc F(\la) = 
\begin{pmatrix}
 F &    G
 \\
 F'   & G' 
\end{pmatrix} 
\eean
is a fundamental matrix of solutions of equation \eqref{DDE}.

\vsk.2>
In \cite{Dw} Dwork introduces a $2\times 2$-matrix function $\mc A(\la)$, 
and then proves the formula
\bean
\label{mc A}
\mc A(\la) = \mc F(\la)\, M \, \mc F(\la^p)^{-1}, \qquad M 
= 
\begin{pmatrix}
 (-1)^{(p-1)/2} &  b
 \\
 0  & (-1)^{(p-1)/2} p
\end{pmatrix},
\eean
where $b$ is a suitable  number, see  \cite[Lemma 6.2]{Dw} and formulas on page 72 in \cite{Dw}.
Dwork shows that $\mc A(\la)$  extends to an analytic function on the domain $\fD_3\cup\fD_4$,
where
\bean
\label{D34}
&
\fD_3 = \{\lambda \in \mathbb{Z}_p \ | \ |\lambda|_p =1, |\lambda - 1|_p= 1\}, 
\qquad
\fD_4 = \{\la\in \Z_p\ |\  \eps <|\la|_p < 1\},
\\
\notag
& \fD_3 \cup \fD_4 =  
 \{\lambda \in \mathbb{Z}_p\ |\  \eps < |\lambda|_p  \leq 1,\, |\lambda - 1|_p = 1\}.
\eean
Here $\eps$ is some explicit number, $0<\eps < 1$.
See the bottom of page 62 in \cite{Dw} and the first sentence of the proof of
 Theorem 6 in \cite{Dw}.

\vsk.2>
Formula \eqref{mc A} immediately implies that
\bean
\label{Frob}
\mc A(\la) \mc F(\la^p) = \mc F(\la) M\,
\eean
on $\fD_4$. The matrix $\mc A(\la)$ is called the matrix of the Frobenius transformation
of solutions of equation \eqref{DDE} 
relative to the fundamental matrix $\mc F(\la)$.

\vsk.2>

It follows from formula \eqref{Frob} that 
\bean
\label{AF}
\mc A(\la) 
\begin{pmatrix}
F(\la^p)
 \\
 F'(\la^p)
 \end{pmatrix}=
 (-1)^{(p-1)/2} 
 \begin{pmatrix}
F(\la)
 \\
 F'(\la)
 \end{pmatrix}
 \eean
 on $\fD_4$. This  can be reformulated as  the relation
\bean
\label{FRE}
\mc A(\la) 
\begin{pmatrix}
1
 \\
 \eta(\la^p) 
 \end{pmatrix}=
 (-1)^{(p-1)/2} f(\la) 
 \begin{pmatrix}
1
 \\
 \eta(\la)
 \end{pmatrix}
 \eean
 on $\fD_4$.  By the already formulated analytic properties of $\eta(\la)$ and $\mc A(\la)$,\,
  relation \eqref{FRE}
 can be analytically continued to the domain $\fD_1\cap \fD_3$.

\vsk.2>

Equation \eqref{FRE} implies that for  any $\al \in \F^\times_p-\{1\}$
such that $\om(\al) \in \fD_1$, the vector $(1,\eta(\om(\al)))$ is an eigenvector of the Frobenius matrix
$\mc A(\om(\al))$ with eigenvalue 
\linebreak
$(-1)^{(p-1)/2} f(\om(\al))$,
\bean
\label{eig}
\mc A(\la) 
\begin{pmatrix}
1
 \\
 \eta(\om(\al)) 
 \end{pmatrix}=
 (-1)^{(p-1)/2} f(\om(\al)) 
 \begin{pmatrix}
1
 \\
 \eta(\om(\al))
 \end{pmatrix}.
 \eean

It is known that 
the zeta function of  the elliptic curve defined over
$\F_p$  by the equation
$y^2=x(x-1)(x-\al)$ has two zeros, which are
$1/((-1)^{(p-1)/2} f(\om(\al)))$, $(-1)^{(p-1)/2} f(\om(\al))/p$.
It is also known that $|f(\om(\al))|_p=1$.
The number $(-1)^{(p-1)/2} f(\om(\al))$ is 
called the unit root.  
See \cite{Dw} and also \eqref{def z}.

\subsection{KZ equations}
The KZ equations \eqref{KZ} for $n=3$  is the following system of differential and algebraic  equations
for a column $3$-vector $I=(I_1,I_2,I_3)$ depending on variables $z=(z_1,z_2,z_3)$\,:
\bean
\label{KZZ}
\frac{\partial I}{\partial z_1}  
&=&
   {\frac 12} \Big(
   \frac{\Omega_{12}}{z_1 - z_2} 
+\frac{\Omega_{13}}{z_1 - z_3} 
      \Big) I ,
\qquad
\frac{\partial I}{\partial z_2}  
= 
   {\frac 12} \Big(
   \frac{\Omega_{21}}{z_2 - z_1} 
+\frac{\Omega_{23}}{z_2 - z_3} 
      \Big) I ,
      \\
\notag
\frac{\partial I}{\partial z_3}  
&=& 
   {\frac 12} \Big(
   \frac{\Omega_{31}}{z_3 - z_1} 
+\frac{\Omega_{32}}{z_3 - z_2} 
\Big) I,
\qquad \quad
0= I_1+I_2+I_3,
\eean 
where $\Om_{ij}=\Om_{ji}$ and
\bea
 \Omega_{12} =  
 \begin{pmatrix}
 -1 & 1 & 0 
 \\
 1   & -1 & 0
 \\
0   & 0 & 0
\end{pmatrix} ,
\quad
 \Omega_{13} =  
 \begin{pmatrix}
 -1 & 0 & 1 
 \\
 0   & 0 & 0
 \\
1   & 0 & -1
\end{pmatrix} ,
\quad
 \Omega_{23} =  
 \begin{pmatrix}
 0 & 0 & 0 
 \\
 0   & -1 & 1
 \\
0   & 1 & -1
\end{pmatrix} .
\eea       
We introduce new variables
\bean
\label{id co}
u_1=z_1-z_3, \qquad
u_2=\frac{z_2-z_3}{z_1-z_3},
\qquad
u_3=z_1+z_2+z_3,
\eean
see \eqref{uz}.
Then system \eqref{KZZ} takes the form
\bean
\label{KZZu}
\frac{\partial I}{\partial u_1}  
&=&
   {\frac 12} 
   \frac{\Omega_{12}+\Omega_{13}+\Omega_{23}}{u_1} I\,,
\qquad
\frac{\partial I}{\partial u_2}  
= 
   {\frac 12} \Big(
   \frac{\Omega_{12}}{u_2-1} + \
   \frac{\Omega_{23}}{u_2} \Big) I,
\\
\notag
\frac{\partial I}{\partial u_3}  
&=& 0,
   \qquad
   \phantom{aaaaaaaaaaaaaaaaaa}
   0= I_1+I_2+I_3.
\eean 
The variables in system \eqref{KZZu} are separated, cf. \eqref{Ku}.

Denote 
$\tilde W=\{ (I_1,I_2,I_3)\ | \  I_1+I_2+I_3 =0\}$. Then
\bean
\label{Om 1}
(\Omega_{12}+\Omega_{13}+\Omega_{23})|_{\tilde W} = -3 \on{Id}\,.
\eean
Hence all solutions of system \eqref{KZZu} have the form
\bean
I = u_1^{-3/2} (J_1(u_2), J_2(u_2), J_3(u_2)), \qquad
J_1+J_2+J_3 =0,
\eean
where the column vector $J(u_2)$ is a solution of the differential equation
\bean
\label{J eqn}
\frac{\partial J}{\partial u_2}  
= 
   {\frac 12} \Big(
   \frac{\Omega_{12}}{u_2-1} + \
   \frac{\Omega_{23}}{u_2} \Big) J\,.
   \eean

\subsection{Solutions over $\C$}
\label{sec11.4}

Any solution of system \eqref{KZZ} has the form
\bean
\label{I3}
I^{(\ga)}(z) =
\int_\ga
\Big(\frac {1}{x-z_1}, \frac {1}{x-z_2},
\frac {1}{x-z_3}\Big) 
\frac {dx}{\sqrt{(x-z_1)(x-z_2)(x-z_3)}}\,.
\eean
where  $\ga$ is a flat family of 1-cycles on the elliptic curves of our family of curves. 

We change $x$ and $z$
in this integral by setting  $x=(z_1-z_3)w+z_3$ and $z=z(u)$ as in \eqref{id co}.
Then integral \eqref{I3} takes the form
\bean
\label{hat I3}
I^{(\ga)}(u_1,u_2) 
=
u_1^{-3/2}
\int_\ga
\Big(\frac {1}{w-1}, \frac {1}{w-u_2},
\frac {1}{w}\Big) \frac {dw}{\sqrt{(w-1)(w-u_2)\,w}}\,.
\eean

We take $\ga=\ga_1$ to be the circle $|w|=1/2$
oriented counter-clockwise.
We  assume that $u_2$ lies in this circle.
We fix the branch of $\sqrt{(w-1)(w-u_2)\,w}$
over the circle by choosing the argument of 
$\sqrt{(w-1)(w-u_2)\,w}$ at $w=1/2$, $u_2=0$  to be $\pi/2$.
We multiply the circle with the chosen branch of the integrand
by $-\frac {1}{2\pi}$. This finishes the description of $\ga_1$.
See the definition of cycles $\ga_l$ in Section \ref{sec 9.5}.

\vsk.2> 
We
expand  the integral $I^{(\ga_1)}(u_1,u_2)$ 
 as a power series in $u_2$ and obtain
\bean
\label{hat I33}
 I^{(\ga_1)}(u_1, u_2) 
&=&
 u_1^{-3/2}  \sum_{a=0}^\infty \Big(\binom{-\frac32}{a}\binom{-\frac12}{a},
\binom{-\frac12}{a+1}\binom{-\frac32}{a},
\binom{-\frac12}{a+1}\binom{-\frac12}{a}\Big) \,u_2^a
\\
\notag
&=&
 u_1^{-3/2}  \sum_{a=0}^\infty \binom{-\frac12}{a+1}\binom{-\frac32}{a}
\Big(\frac{a+1}{-1/2-a},1,\frac{-1/2}{-1/2-a}\Big) u_2^a\, ,
\eean
see Theorem \eqref{thm hot I}.   Denote
\bean
\label{hI}
&
I
:= I^{(\ga_1)} =
 (z_1-z_3)^{-3/2}  \sum_{a=0}^\infty \binom{-\frac12}{a+1}\binom{-\frac32}{a}
\Big(\frac{a+1}{-1/2-a},1,\frac{-1/2}{-1/2-a}\Big) \Big(\frac{z_2-z_3}{z_1-z_3}\Big)^a\,
\\
\label{hIu}
&
=\  \ \  u_1^{-3/2}  \sum_{a=0}^\infty \binom{-\frac12}{a+1}\binom{-\frac32}{a}
\Big(\frac{a+1}{-1/2-a},1,\frac{-1/2}{-1/2-a}\Big) u_2^a\, .
\eean
This series is a solution of  system \eqref{KZZ}.

\begin{rem}
Formulas \eqref{hat I3} and \eqref{hat I33} imply that
\bean
\label{hat I3333}
&
I(u_1, u_2) 
=
 u_1^{-3/2} 
 \Big({}_2F_1\Big(\frac12,\frac32;1;u_2\Big),\,-\frac12\, {}_2F_1\Big(\frac32,\frac32;2;u_2\Big),
 -\frac12\, {}_2F_1\Big(\frac12,\frac32;2;u_2\Big) \Big)\,,
 \eean
 where 
${}_2F_1(a,b;c;\la)$ is the classical hypergeometric function.

\end{rem}

\subsection{Solutions as vectors of first derivatives}
\label{sec11.5}

Introduce the function
\bean
\label{Lg}
\ell^{(\ga)}(z) =
\int_\ga
\frac {dx}{\sqrt{(x-z_1)(x-z_2)(x-z_3)}}\,.
\eean
Then
\bean
\label{KI}
I^{(\ga)}(z) 
=
\,
2\,
\Big(\frac {\der \ell^{(\ga)}}{\der z_1},
\frac {\der \ell^{(\ga)}}{\der z_2},
\frac {\der \ell^{(\ga)}}{\der z_3}\Big).
\eean
Changing the variable $x=w(z_1-z_3)+z_3$ we write
\bean
\label{LK}
\ell^{(\ga)}(z) 
&=&
(z_1-z_3)^{-1/2} \int_\ga \frac {dw}{\sqrt{(w-1)(w-\frac{z_2-z_3}{z_1-z_3})w}}\,,
\\
\notag
& =&
 (z_1-z_3)^{-1/2} h^{(\ga)}\Big(\frac{z_2-z_3}{z_1-z_3}\Big)
 \\
 \notag
 & =& u_1^{-1/2} h^{(\ga)}(u_2),
\eean
where $h^{(\ga)}(\la)$ is the elliptic integral in \eqref{Kg} and 
$h^{(\ga)}(\la)$  is a solution of equation \eqref{HEa}.
Denote $h(\la):=h^{(\ga)}(\la)$. Then
\bean
\label{Lh}
&
\\
\notag
&
\!\!
I^{(\ga)}
\!=\!
(z_1-z_3)^{-3/2}\big( \!\!- \!h\big(\frac{z_2-z_3}{z_1-z_3}\big)\! -\!2h'\big(\frac{z_2-z_3}{z_1-z_3}\big)
\frac{z_2-z_3}{z_1-z_3},
2h'\big(\frac{z_2-z_3}{z_1-z_3}\big),
 h\big(\frac{z_2-z_3}{z_1-z_3}\big) \!+\!2h'\big(\frac{z_2-z_3}{z_1-z_3}\big) \frac{z_2-z_1}{z_1-z_3}\big)
\\
\label{Lhu}
&
=
u_1^{-3/2}( - h(u_2) -2h'(u_2)u_2,\,
2h'(u_2),\,
 h(u_2) +2h'(u_2)(u_2-1)).
\eean
Formula \eqref{Lhu} relates
 solutions of system  \eqref{DDE} and solutions of system
\eqref{KZZu}. If $(h,h')$ is a solution of system \eqref{DDE}, then equation
\bean
\label{H-KZ}
\begin{pmatrix}
 I_1(u_1,u_2)
 \\
 I_2(u_1,u_2)
\\
 I_3(u_1,u_2)
 \end{pmatrix} 
=
u_1^{-3/2}
\begin{pmatrix}
 -1 & -2u_2
 \\
 0   &   2
 \\
1 & 2u_2-2
\end{pmatrix} 
\begin{pmatrix}
 h(u_2)
 \\
 h'(u_2)
 \end{pmatrix} 
\eean
gives  a solution $(I_1, I_2, I_3)$ of system \eqref{KZZu}. Conversely,
if $(I_1, I_2, I_3)$ is a solution of system \eqref{KZZu} then formula
\bean
\label{KZ-H}
\begin{pmatrix}
 h(u_2)
 \\
 h'(u_2) 
\end{pmatrix} 
=
u_1^{3/2}
\begin{pmatrix}
 -1 & -u_2 & 0
 \\
 0   &   1/2 & 0
\end{pmatrix} 
\begin{pmatrix}
 I_1(u_1,u_2)
 \\
 I_2(u_1,u_2)
 \\
 I_3(u_1,u_2)
 \end{pmatrix} 
\eean
gives a solution $(h,h')$ of system \eqref{DDE}.

Using the cycle $\ga_1$ we can evaluate
\bean
\label{Kz}
\ell^{(\ga_1)} &=& (z_1-z_3)^{-1/2} F\Big(\frac{z_2-z_3}{z_1-z_3}\Big)
\\
\notag
 &=& u_1^{-1/2} F(u_2),
\qquad\quad\quad\quad\quad
\text{where}\ \  \
F(\la) ={}_2F_1\Big(\frac12,\frac12;1;\la\Big).
\eean
Denote 
\bean
\label{mc K}
\ell:=\ell^{(\ga_1)}, 
\qquad
\mc I \, :=\,\frac I\ell =
\frac2\ell\,\Big(\frac{\der \ell}{\der z_1},\frac{\der \ell}{\der z_2}, \frac{\der \ell}{\der z_3}\Big),
\eean
where  $I$ is defined in \eqref{hI}.
Formulas \eqref{Lh} and \eqref{Lhu} imply
\bean
\label{K/Kz}
&
\mc I = \frac 1{z_1-z_3}\big(\! -1 -2\eta\big(\frac{z_2-z_3}{z_1-z_3}\big)\frac{z_2-z_3}{z_1-z_3},\,
2\eta\big(\frac{z_2-z_3}{z_1-z_3}\big),\,
1+2\eta\big(\frac{z_2-z_3}{z_1-z_3}\big) \frac{z_2-z_1}{z_1-z_3}\big),
\\
\label{K/K}
&\phantom{aaa}
 = \frac1{u_1}
\big(\!- 1 - 2\eta(u_2) u_2,\, 2\eta(u_2),\, 1 + 2\eta(u_2)(u_2-1)\big)\,,
\eean
where the function $\eta(\la)$ is defined in \eqref{def etf}.

\subsection{Six coordinate systems} 
\label{sec A5}

System \eqref{KZZ} of KZ equations has 6 distinguished coordinate systems (asymptotic zones).
They are labeled by permutations $\si=(i,j,k)\in S_3$.  
The coordinate system $u^\si=(u_1^\si,u_2^\si, u_3^\si)$ is defined by the formulas
\bean
\label{co si}
u_1^{\si}=z_{i}-z_{k}, 
\qquad
u_2^{\si}=\frac{z_{j}-z_{k}}{z_{i}-z_{k}}, \qquad
u_3^\si=z_1+z_2+z_3.
\eean
For the  identity element $\id =(1,2,3)$ the corresponding
coordinate system is 
defined in \eqref{id co}.

\vsk.2>
Having one of these coordinate systems we
repeat the constructions of Sections \ref{sec11.4}-\ref{sec11.5} and construct a 
scalar function $\ell^\si(z)$ and vector-valued functions
$I^\si(z)$,  $\mc I^\si(z)$, such that $\mc I^\si(z) =  I^\si(z)/\ell^\si(z)$.
For the  identity element $\id =(1,2,3)$ these  functions are $\ell(z)$, $I(z)$, $\mc I(z)$
 in  \eqref{Kz}, \eqref{hI}, \eqref{K/Kz}. Notice that the functions $\ell(z)$ and $I(z)$
are defined as integrals over $\ga_1$, and that $\ga_1$ is defined 
 with the help of  coordinates $u_1,u_2, u_3$.  
 
 \vsk.2>
 For any $\si$ the function $I^\si(z)$ is a power series solution of system \eqref{KZZ} in the chart with coordinates
 $u^\si$, see \eqref{hat I33} and \eqref{hI}.

\vsk.2>
Below we list the functions $\mc I^\si $\,:
 \bean
\label{Ksi}
&
\\
\notag
&
\mc I^{123} = \frac1{z_1-z_3}\big( \!-1 -2\eta\big(\frac{z_2-z_3}{z_1-z_3}\big)\frac{z_ 2-z_3}{z_1-z_3},\,
2\eta\big(\frac{z_2-z_3}{z_1-z_3}\big),\,
1  +\eta\big(\frac{z_2-z_3}{z_1-z_3}\big) \frac{z_2-z_1}{z_1-z_3}\big),
\\
\notag
&
\mc I^{321} = \frac1{z_3-z_1}\big(1 +2\eta\big(\frac{z_2-z_1}{z_3-z_1}\big) \frac{z_2-z_3}{z_3-z_1},\, 
2\eta\big(\frac{z_2-z_1}{z_3-z_1}\big),\,
  -1 -2\eta\big(\frac{z_2-z_1}{z_3-z_1}\big)\frac{z_ 2-z_1}{z_3-z_1}\big),
\\
\notag
&
\mc I^{213} = \frac1{z_2-z_3}\big( 
2\eta\big(\frac{z_1-z_3}{z_2-z_3}\big),\,
-1 - 2\eta\big(\frac{z_1-z_3}{z_2-z_3}\big)\frac{z_1-z_3}{z_2-z_3},\,
 1 +2\eta\big(\frac{z_1-z_3}{z_2-z_3}\big) \frac{z_1-z_2}{z_2-z_3}\big),
\\
\notag
&
\mc I^{132} = \frac1{z_1-z_2}
\big( -1 -\eta\big(\frac{z_3-z_2}{z_1-z_2}\big)\frac{z_ 3-z_2}{z_1-z_2},\,
 1 +2\eta\big(\frac{z_3-z_2}{z_1-z_2}\big) \frac{z_3-z_1}{z_1-z_2},\,
 2\eta\big(\frac{z_3-z_2}{z_1-z_2}\big)\big),
\\
\notag
&
\mc I^{231} = \frac1{z_2-z_1} \big( 
1 +2\eta\big(\frac{z_3-z_1}{z_2-z_1}\big) \frac{z_3-z_2}{z_2-z_1},\,
-1 -2\eta\big(\frac{z_3-z_1}{z_2-z_1}\big)\frac{z_ 3-z_1}{z_2-z_1},\,
2\eta\big(\frac{z_3-z_1}{z_2-z_1}\big) \big),
\\
\notag
&
\mc I^{312} = \frac{1}{z_3-z_2}\big( 2\eta\big(\frac{z_1-z_2}{z_3-z_2}\big),\,
 1 +2\eta\big(\frac{z_1-z_2}{z_3-z_2}\big) \frac{z_1-z_3}{z_3-z_2},\,
 -1 -2\eta\big(\frac{z_1-z_2}{z_3-z_2}\big)\frac{z_ 1-z_2}{z_3-z_2)^2}\big).
\eean

\begin{thm}
\label{thm ael}
For $(i,j,k)\in S_3$ consider the three functions
$\mc I^{ijk}$, $\mc I^{kji}$, $\mc I^{jik}$.
Then $\mc I^{kji}$ is transformed to $\mc I^{ijk}$ by application of formula
$\eta(1-\la) = -\eta(\la)$ and  $\mc I^{jik}$ is transformed to  $\mc I^{ijk}$
by application of formula $\eta(1/\la) = -\la^2\eta(\la)-\la/2$.

\end{thm}

\begin{proof}
The proof is straightforward. For example we check the statement for $(i,j,k)=(1,2,3)$.
In this case the functions $\mc I^{123}$, $\mc I^{321}$, $\mc I^{132}$ are
\bea
&
\frac 1{u_1}\big(
-1 - 2\eta(u_2)u_2,\,
2\eta(u_2) ,\,
1 + 2\eta(u_2) (u_2-1)\big)\,,
\\
&
\frac1{u_1}\big(
-1 + 2\eta(1-u_2)u_2, \, 
-2\eta(1-u_2), \,
1 + 2\eta(1-u_2)(1-u_2)\big)\,,
\\
&
\frac{1}{u_1u_2}
\big( 2\eta\big(\frac1{u_2}\big) ,\,
-1 - 2\eta\big(\frac1{u_2}\big) \frac{1}{u_2},\,
1 + 2\eta\big(\frac1{u_2}\big)\frac{1-u_2}{u_2}\big)\,,
\eea
where $u_1,u_2$ are defined  in \eqref{id co}. Then formula 
$\eta(1-u_2) = -\eta(u_2)$ transforms the second function to the first and
the formula
$\eta(1/u_2) = -u_2^2\eta(u_2)-u_2/2$ transforms the third function to the first.
\end{proof}

Define 
\bea
\tilde {\frak D}_0 = \{(z_1,z_2,z_3)\in \Q_p^3\ |\ z_i\ne z_j\,\ \forall i\ne j\}.
\eea
For any $\si=(i,j,k)\in S_3$ define
\bea
&&
\phantom{aaaaa}
\tilde {\frak D}_1^\si =\Big\{(z_1,z_2,z_3)\in \tilde {\frak D}_0\  \Big|\ \
\frac{z_j-z_k}{z_i-z_k}\in \Z_p,\ \Big| g\Big(\frac{z_j-z_k}{z_i-z_k}\Big)\Big|_p = 1\Big\},
\\
&&
\tilde{\frak D}_2^\si = \Big\{(z_1,z_2,z_3)\in \tilde {\frak D}_0\  \Big|\ 
\frac{z_i-z_k}{z_j-z_k} \in \tilde {\frak D}_1^\si\Big\},
\qquad
\tilde{\frak D}^\si= \tilde{\frak D}_1^\si\cup\tilde {\frak D}_2^\si,
\qquad
\tilde{\frak D} = \sum_{\si\in S_3}\tilde{\frak D}^\si,
\eea
where the function $g$ is defined in \eqref{Igu}.

For any any $(i,j,k)\in S_3$ the functions 
$\mc I^{ijk}$, $\mc I^{kji}$, $\mc I^{jik}$  define an analytic element on
$\tilde{\frak D}^\si$, see Section \ref{sec A17} and \cite{Dw}.
Theorem \ref{thm ael} implies the following corollary.

\begin{cor}
\label{cor A1} The functions $(\mc I^{ijk})_{(i,j,k)\in S_3}$ define an analytic element on
$\tilde{\frak D}$.
\qed
\end{cor}

\begin{rem}
Dwork's formulas \eqref{DD} present the $S_3$-symmetries of the analytic element 
$(\eta(\la)$, $-\eta(1-\la)$, $-\eta(1/\la)/\la^2 -1/(2\la))$.
Dwork's $S_3$-symmetries reformulated as $S_3$-symmetries of the analytic element $(\mc I^{ijk})_{(i,j,k)\in S_3}$ 
look even more well-rounded.

\end{rem}

\subsection{Subbundle} 

Denote $\tilde W=\{(I_1,I_2,I_3)\in \Q_p^3\ |\ I_1+I_2+I_3=0\}$.
System  \eqref{KZZ} of KZ equations defines a flat connection on the trivial bundle
$ \tilde W\times \tilde{\frak D}_0 \to \tilde{\frak D}_0$. The flat sections of that bundle are solutions of
 system  \eqref{KZZ} of KZ equations.

\vsk.2>
For any $\al \in \tilde{\frak D}$ such that $\al\in \tilde{\frak D}^\si$ the vector
$\mc I^\si(\al)$ spans a one-dimensional subspace $\tilde U_\al\subset \tilde W$. That
 subspace does not depend on
$\si$ such that $\al\in \tilde{\frak D}^\si$. The union of these subspaces defines a one-dimensional 
subbundle $\tilde {\mc U}\to \tilde{\frak D}$ of the trivial bundle $ \tilde W\times \tilde{\frak D} \to \tilde{\frak D}$. 

\begin{thm}
\label{thm inv} 

The subbundle  $\tilde {\mc U}\to \tilde{\frak D}$ is invariant with respect to the KZ connection on 
$ \tilde W\times \tilde{\frak D} \to \tilde{\frak D}$.
\end{thm}

\begin{proof} For any $\si \in S_3$ 
the subbundle $\tilde {\mc U}\to \tilde{\frak D}$ is generated by the flat section 
$I^\si$ near the points where $u^\si_2=0$. 
Hence the subbundle $\tilde {\mc U}\to \tilde{\frak D}$ is generated by a flat section 
near any point of $\tilde{\frak D}$, see Section \ref{sec A18} and \cite[Lemma 3.1]{Dw}.
\end{proof}

\begin{rem}
For any $\si\in S_3$ the flat section $I^\si$ generates the subbundle  $\tilde {\mc U}\to \tilde{\frak D}$ 
 near the points where $u^\si_2=0$. The power series $I^\si$ considered over $\C$ 
is the expansion of an integral over a cycle vanishing at the points where $u_2^\si=0$.
The analytic continuation over $\C$ of that integral over that vanishing cycle could not generate a one-dimensional subbundle
of the trivial bundle $\tilde W\times \tilde\fD\to \tilde \fD$ since the monodromy representation of the complex KZ equations
in this case is irreducible. In contrast with this fact over $\C$, the $p$-adic power series solutions $I^\si$,
$\si\in S_3$, defined at different points
glue together into a single line bundle $\tilde {\mc U}\to \tilde{\frak D}$. This line bundle is what Dwork calls a
{\it $p$-adic cycle}. This $p$-adic phenomenon was stressed by Dwork in \cite{Dw} 
who titled  his paper  {\it $P$-adic Cycles}. 

\end{rem}

\begin{rem} 
The invariant subbundles of the KZ connection over $\C$ usually are related to some additional conformal block
constructions, see \cite{FSV1, FSV2, SV2, V7}. Apparently the subbundle $\tilde {\mc U}\to \tilde{\frak D}$  is of a
different $p$-adic nature, cf. \cite{V7}.

\end{rem}

\subsection{Boundedness}
Let $\si\in S_3$ and $\al \in \tilde{\frak D}^\si$. For $w\in W$ let
$I(z;w)$ be the germ at $\al$ of the solution of the 
KZ equations with initial condition $I(\al,w)=w$. 
By formula \eqref{Lhu}, the coordinates of $I(z;w)$
have the form
\bea
(u_1^\si)^{-3/2}(- h(u_2^\si) -2h'(u_2^\si)u_2^\si), \quad
(u_1^\si)^{-3/2} 2h'(u_2^\si),\quad
(u_1^\si)^{-3/2}( h(u_2^\si) +2h'(u_2^\si)(u_2^\si-1)),
\eea
where $h$ is the germ at the point $u^\si_2=u^\si_2(\al)$  of a solution of equation \eqref{HEa}.
We say that the germ $I(z;w)$ is bounded
if  each of the germs
$- h(u_2^\si) -2h'(u_2^\si)u_2^\si$,
$ 2h'(u_2^\si)$,
$h(u_2^\si) +2h'(u_2^\si)(u_2^\si-1)$
is bounded in its disc of convergence.

\begin{thm}
\label{thm bound}

The germ $I(z;w)$ is bounded if and only if $w\in \tilde U_\al$.

\end{thm}

\begin{proof} Let $w\in \tilde U_\al$. Then the germ $h$ belongs to the corresponding subspace
$U_{u^\si_2(\al)}$ defined in 
Section \ref{sec A18}. By \cite[Lemma 4.2]{Dw} 
the germ $h$ is bounded in its disc of convergence, see Section \ref{sec A19}.
Hence each of the three germs $- h(u_2^\si) -2h'(u_2^\si)u_2^\si$,
$ 2h'(u_2^\si)$,
$h(u_2^\si) +2h'(u_2^\si)(u_2^\si-1)$ is bounded in its disc of convergence. 

If $w\not\in \tilde U_\al$\,,\ then  $h\not\in U_{u^\si_2(\al)}$.  By \cite[Lemma 4.2]{Dw} 
the germ $h$ is unbounded in its disc of convergence. Then at least one
of the three germs $- h(u_2^\si) -2h'(u_2^\si)u_2^\si$,
$ 2h'(u_2^\si)$,
$h(u_2^\si) +2h'(u_2^\si)(u_2^\si-1)$ is unbounded in its disc of convergence. 
\end{proof}

\subsection{More domains}
Denote
\bean
\label{tD34}
&
\tilde {\frak D}_3 =\big\{(z_1,z_2,z_3)\in \tilde {\frak D}_0\  \big|\ \
\frac{z_2-z_3}{z_1-z_3}\in \Z_p,\ \big| \frac{z_2-z_3}{z_1-z_3}\big|_p = 1, \
\big| \frac{z_2-z_3}{z_1-z_3}-1\big|_p = 1 \big\},
\\
\notag
&
\tilde {\frak D}_4 =\big\{(z_1,z_2,z_3)\in \tilde {\frak D}_0\  \big|\ \
\frac{z_2-z_3}{z_1-z_3}\in \Z_p,\ \eps <\big| \frac{z_2-z_3}{z_1-z_3}\big|_p < 1 \big\},
\eean
where $\eps$ is the same number as in \eqref{D34}.

\subsection{Frobenius map on solutions of KZ equations}
\label{sec A9}

 Formula \eqref{Frob} describes the Frobenius map on solutions of equation 
\eqref{DDE}. Solutions of equation \eqref{DDE} are identified with solutions of 
the KZ system \eqref{KZZu} by formulas
 \eqref{H-KZ} and \eqref{KZ-H}. That allows us to define
 the Frobenius map on solutions of the KZ system \eqref{KZZu}.
 
\vsk.2>
Denote
\bean
\label{matr}
\mc B(u_1,u_2) =
u_1^{3/2}
\begin{pmatrix}
 -1 & -u_2 & 0
 \\
 0   &   1/2 & 0
\end{pmatrix} ,
\quad 
\mc C(u_1,u_2) =
u_1^{-3/2}
\begin{pmatrix}
 -1 & -2u_2
 \\
 0   &   2
 \\
1 & 2u_2-2
\end{pmatrix}.
\eean
we have
$\mc B(u_1,u_2)\mc C(u_1,u_2) 
=\begin{pmatrix}
 1 & 0
 \\
 0   &   1
\end{pmatrix}$, 
$\mc C(u_1,u_2)\mc B(u_1,u_2) 
=\begin{pmatrix}
 1 & 0 &0
 \\
 0   &   1&0
\\
-1&-1&0
\end{pmatrix}$. The second matrix defines the identity operator on the space 
$\tilde W=\{(I_1,I_2,I_3)\in \Q_p^3\ |\ I_1+I_2+I_3=0\}$.

\vsk.2> 

Recall the matrix $\mc F(u_2)$  defined in \eqref{U la}.
By formula \eqref{H-KZ} the matrix 
\bea
\tilde {\mc F}(u_1,u_2) = \mc C(u_1,u_2) \mc F(u_2)
\eea
 is a fundamental matrix of solutions of system \eqref{KZZu}.
 Recall the matrices $\mc A(\la)$, $M$ in \eqref{mc A}.
Denote
\bean
\label{Fr mat}
\tilde{\mc A}(u_1,u_2)  = \mc C(u_1,u_2)\,\mc A(u_2) \,\mc B((u_1)^p,(u_2)^p).
\eean
This is a $3\times 3$ matrix valued function, whose values preserve the subspace
$\tilde W\subset \Q_p^3$.

\begin{thm}
\label{thm on Frob}

We have
\bean
\label{F KZ}
\tilde{\mc A}(u_1,u_2) \,\tilde {\mc F}((u_1)^p,(u_2)^p)\,=\, \tilde {\mc F}(u_1,u_2) \,M.
\eean
The matrix $\tilde{\mc A}(u_1,u_2)$ extends to an analytic function on the domain
$\tilde{\frak D}_3\cup \tilde{\frak D}_4$.

\end{thm}

\begin{proof}
The theorem is a corollary of formula \eqref{Frob} and Dwork's statements listed in Section \ref{sec A110}.
\end{proof}

We call  $\tilde{\mc A}(u_1,u_2)$ the matrix of the Frobenius transformation
of solutions of system \eqref{KZZu} 
relative to the fundamental matrix $\tilde{\mc F}(u_1,u_2)$ on the domain 
$\tilde{\frak D}_3\cup \tilde{\frak D}_4$.

\vsk.2>

Recall the distinguished solution
\bean
\label{I 123}
\phantom{aaa}
I(u_1,u_2) = u_1^{-3/2}( - F(u_2) -2F'(u_2)u_2,\, 2F'(u_2),\,  F(u_2) +2F'(u_2)(u_2-1))
\eean

\vsk.4>
\noindent
of system \eqref{KZZu}  defined near the points where $u_2=0$, see \eqref{Lhu}. 
By \eqref{AF} we have
\bean
\label{tAF}
\tilde{\mc A}(u_1,u_2)\, I((u_1)^p,(u_2)^p) \,=\, (-1)^{(p-1)/2}\,I(u_1,u_2)
\eean
on $\tilde {\frak D}_4$.
Recall $\ell(u_1,u_2) = u_1^{-1/2}F(u_2)$ in \eqref{Kz}. Dividing both sides in
\eqref{I 123} by 
\linebreak
$\ell((u_1)^p, (u_2)^p)$ we can reformulate \eqref{tAF} as

\bean
\label{FREK}
\tilde {\mc A}(u_1,u_2) \,\mc I((u_1)^p,(u_2)^p)\, 
 =\,(-1)^{(p-1)/2} u_1^{(p-1)/2} f(u_2) \,\mc I(u_1,u_2)
 \eean

\vsk.4>
\noindent
 on $\tilde\fD_4$, see $\mc I(u_1,u_2)$ in \eqref{K/K} and $f(u_2)$ in \eqref{def etf}. 
 As in Section \ref{sec A110} we conclude with Dwork that
 relation \eqref{FREK}
 can be analytically continued to the domain $\tilde\fD_1^{(1,2,3)}\cap \tilde \fD_3$.

 \vsk.4>
 
 Equation \eqref{FREK} implies that for  any $\al \in \F^\times_p-\{1\}$, $\beta \in \F^\times_p$
such that $\om(\al) \in \fD_1$, the vector $\mc I(\om(\beta),\om(\al))$ is an eigenvector of the Frobenius matrix
$\tilde{\mc A}(\om(\beta),\om(\al))$ with eigenvalue  $\om(\beta^{(p-1)/2}) (-1)^{(p-1)/2}  f(\om(\al))$,
\bean
\label{eigK}
\phantom{aaaaaa}
\tilde{\mc A}(\om(\beta),\om(\al))\,\mc I(\om(\beta),\om(\al))\,=\,   
\om(\beta^{(p-1)/2})(-1)^{(p-1)/2}  f(\om(\al))\,\mc I(\om(\beta),\om(\al))\,.
 \eean

\vsk.4>
In this Section \ref{sec A9} we described the matrix $\tilde {\mc A}(u_1,u_2)$
of the Frobenius transformation
of solutions of system \eqref{KZZ} written in coordinates $u_1,u_2,u_3$ corresponding to the chart
labeled by the identify permutation $(1,2,3)\in S_3$,\, 
In the same way we may start with the chart corresponding
to any permutation $\si\in S_3$ and describe 
the matrix of the Frobenius transformation
of solutions of system \eqref{KZZ} written in coordinates $u_1^\si,u_2^\si,u_3^\si$.

\subsection{Eigenvalue  $\om(\beta^{(p-1)/2}) (-1)^{(p-1)/2}  f(\om(\al))$}
\label{sec A10}

\begin{thm}
\label{lem uro}

The number $\om(\beta^{(p-1)/2}) (-1)^{(p-1)/2}  f(\om(\al))$
is  the unit root of the elliptic curve $E(\al,\beta)$
defined over $\F_p$ by the affine equation
\bean
\label{al be}
w^2= \beta\, v (v-1)(v-\al).
\eean
\end{thm} 

\begin{proof}  

Assume that $\beta\in \F_p^\times$ is a square, $\beta=\ga^2$ for some $\ga\in\F_p$. 
Then on the one hand  the change of the variable $\tilde w= w/\ga$ makes $E(\al,\beta)$ isomorphic to $E(\al,1)$.
On the other hand $\beta^{(p-1)/2} = 1$ and 
$\om(\beta^{(p-1)/2}) (-1)^{(p-1)/2}  f(\om(\al)) =  (-1)^{(p-1)/2}  f(\om(\al))$, where
the last number is the unit root  of the elliptic curve $E(\al,1)$ by \cite{Dw}.

Assume that $\beta\in \F_p^\times$ is not a square.  Denote by $N_{1,\beta}$ the number of points on
$E(\al,\beta)$. Then
\bean
\label{N+N}
N_{1,1}+N_{1,\beta} = 4+4+2(p-3) = 2p+2.
\eean
Indeed the number $4+4$ corresponds to the points  $(0,0)$, $(0,1)$, $(0,\al)$, $\infty$ on 
$E(\al,\beta)$ and on $E(\al,1)$. The number $2(p-3)$ corresponds to  $p-3$
elements of $ \F_p-\{0,1,\al\}$. Namely if 
 $v_0\in \F_p-\{0,1,\al\}$,\, then exactly one of the two elements
$\beta v_0 (v_0-1)(v_0-\al), \, v_0 (v_0-1)(v_0-\al)$ is a square in $\F_p$
and exactly one of the two elliptic curves
has two points over $v=v_0$, while the other curve does not have points over $v_0$. 

It is known that  the zeta function of the curve $E(\al,\beta)$ has the form
\bean
\label{def z}
\exp\Big(\sum_{s=1}^\infty\frac{N_{s,\beta}}s T^s\Big) =
\frac{(1- R_\beta T)(1- (p/R_\beta) T)}{(1-T)(1-pT)}.
\eean
Here $N_{s,\beta}$ is the number points on  $E(\al,\beta)$ considered over the field $\F_{p^s}$, while
the number $R_\beta$ has $|R_\beta|_p=1$ and is called the unit root, for example see \cite{Mo}.
Equation \eqref{def z} implies that for any $s$ we have $N_{s,\beta}=1+p^s - R_\beta^s - (p/R_\beta)^s$.
In particular for $s=1$ we have
\bean
\label{N_1}
N_{1,\beta} = 1+p - R_\beta - p/R_\beta.
\eean
From \eqref{N+N} and \eqref{N_1} we obtain
\bea
0=R_\beta + p/R_\beta + R_1 + p/R_1 = (R_\beta+R_1) (1+p/R_\beta R_1).
\eea
Since the second factor is nonzero we conclude that
$R_\beta = - R_1$. By \cite{Dw} we have $R_1= (-1)^{(p-1)/2}  f(\om(\al))$.
Hence $R_\beta = -\, (-1)^{(p-1)/2}  f(\om(\al)) =
\om(\beta^{(p-1)/2}) (-1)^{(p-1)/2}  f(\om(\al))$. The theorem is proved.
\end{proof}

The relation between  the eigenvector $\mc I(\om(\beta),\om(\al))$ and 
the elliptic curve $E(\al,\beta)$, indicated in Theorem \ref{lem uro},
 can be explained as follows.
Over $\C$ the vector $\mc I$ is  given by integrals over cycles on
elliptic curves with equation $y^2=(x-z_1)(x-z_2)(x-z_3)$. After the change of variables
\bea
x=(z_1-z_3) w+z_3,
\quad u_1=z_1-z_3,
\quad 
u_2=\frac{z_2-z_3}{z_1-z_3},
\quad
y=(z_1-z_3)v,
\eea
the equation takes the form
\bean
\label{vw eq}
v^2 = u_1 (w-1) (w-u_2) w.
\eean
The eigenvector $\mc I(\om(\beta),\om(\al))$ corresponds to the curve in \eqref{vw eq}
with $(u_1,u_2)$ $ =$
 $ (\om(\beta),\om(\al))$, and $ (\om(\beta),\om(\al))\equiv (\beta, \al)$ mod $p$.

\vsk.2>
It is more surprising that system \eqref{KZZ} of KZ equations
gives a bit more arithmetic information
than the hypergeometric equation \eqref{DDE},
 despite the fact system \eqref{KZZ} and equation \eqref{DDE} are equivalent by
 \eqref{H-KZ} and \eqref{KZ-H}.
Indeed Dwork's eigenvectors in \eqref{FRE} give unit roots of elliptic curves $E(\al,1)$ while
the eigenvectors in \eqref{eigK} coming from the KZ equations give unit roots of more general
elliptic curves $E(\al,\beta)$.

\subsection{Approximation of analytic element  $(\mc I^{ijk})_{(i,j,k)\in S_3}$ by rational functions}

Let $s$ be a positive integer.

\subsubsection{}

Let $M = (p^s-1)/2$,\  \ $\Phi_{p^s}(x,z) 
=
\prod_{i=1}^3(x-z_i)^{M}$,
\bea
P_{p^s}(x,z) 
=
\Big(\frac {\Phi_{p^s}(x,z)}{x-z_1}, \frac {\Phi_{p^s}(x,z)}{x-z_2} ,\frac {\Phi_{p^s}(x,z)}{x-z_3}\Big)
=\sum_i P^{i}_{p^s}(z) \,x^i\,.
\eea
Denote $I^{[p^s-1]}_{p^s}(z)\,=\, P^{p^s-1}_{p^s}(z)$. 
The functions
\bea
I^{[p^{s}-1]}_{p^{s}}(z), \quad p I^{[p^{s-1}-1]}_{p^{s-1}}(z), \ \ \dots ,\  \ 
 p^{s-2} I^{[p^2-1]}_{p^2}(z),
\quad 
p^{s-1} I^{[p-1]}_{p}(z),
\eea 
are solutions of system \eqref{KZZ} modulo $p^s$ by Theorem \ref{thm Fp}.

\vsk.2>
Let  $\Phi_{p^s}(x,z) = \sum_{i} \Phi_{p^s}^i(z)x^i$. 
Denote
\bean
\ell_{p^s}(z) = \Phi_{p^s}^{p^s-1}(z).
\eean

\subsubsection{} For $k=1,2,3$ let
\bea
 P_{k,p^s}(v,z):=
P_{p^s}(v+z_k,z) 
=\sum_i P^{i}_{k, p^s}(z) \,v^i\,.
\eea
Denote $I^{[p^s-1]}_{k,p^s}(z)\,=\, P^{p^s-1}_{k,p^s}(z)$.
The functions
\bea
I^{[p^{s}-1]}_{k,p^{s}}(z), \quad p I^{[p^{s-1}-1]}_{k,p^{s-1}}(z), \ \ \dots ,\  \ 
 p^{s-2} I^{[p^2-1]}_{k,p^2}(z),
\quad 
p^{s-1} I^{[p-1]}_{k,p}(z),
\eea 
are solutions of system \eqref{KZZ} modulo $p^s$ by Theorem \ref{thm tp^s}.

\vsk.2>

\vsk.2>
Let  $\Phi_{k, p^s}(v,z) := \Phi_{p^s}(v+z_k, z) = \sum_{i} \Phi_{k, p^s}^i(z)v^i$. 
Denote
\bean
\ell_{k, p^s}^{[p^{s}-1]}(z) = \Phi_{k, p^s}^{p^s-1}(z).
\eean

\subsubsection{}
Recall the  homomorphisms 
$\Z \to \Zs$, $\Z[z] \to (\Zs)[z]$,  $\Z[z]^3 \to (\Zs)[z]^3$
denoted by $\pi_s$.
Recall the subring  $\Z[z]_{p^r}\subset \Z[z]$ of 
quasi-constants modulo $p^r$. 

\subsubsection{}

Define the filtration
\bean
\label{flt}
0 = \mathcal{M}_{p^s}^0
\subset \mathcal{M}_{p^s}^1\subset \dots\subset \mathcal{M}_{p^s}^{s-1} \subset \mathcal{M}_{p^s}^s = \mathcal{M}_{p^s}\,,
\eean

\noindent
where
$\mathcal{M}_{p^s}^t\,=\,\big\{ \pi_{s}\big(\sum_{r=1}^t c_{r}(z) \,p^{s-r} I^{[p^r-1]}_{p^r}(z) \big)
\ |\ c_{r}(z)\in\Z[z]_{p^{r}}\big\},$
$ t=1,\dots,s$.\
 Every element of $\mathcal{M}_{p^s}$ is a polynomial
solution of system \eqref{KZZ} with coefficients in $\Zs$.
\vsk.2>

Define the filtration
\bean
\label{flT}
0 = \mathcal{M}_{k,p^s}^0
\subset \mathcal{M}_{k,p^s}^1\subset \dots\subset \mathcal{M}_{k,p^s}^{s-1} 
\subset \mathcal{M}_{k,p^s}^s = \mathcal{M}_{k,p^s}\,,
\eean

\noindent
where
$\mathcal{M}_{k,p^s}^t\,=\,\big\{ \pi_{s}\big(\sum_{r=1}^t c_{r}(z) \,p^{s-r} I^{[p^r-1]}_{k,p^r}(z) \big)
\ |\ c_{r}(z)\in\Z[z]_{p^{r}}\big\},$
$ t=1,\dots,s$.\
 Every element of $\mathcal{M}_{p^s}$ is a polynomial
solution of system \eqref{KZZ} with coefficients in $\Zs$ by Theorem \ref{thm tp^s}.
\vsk.2>

\vsk.2>
By Theorem \ref{thm tp^s} filtrations \eqref{flt} and \eqref{flT} coincide,  
$\mathcal{M}_{p^s}^t=\mathcal{M}_{k,p^s}^t$ for any $k,t$.

\subsubsection{}

Define the filtration
\bean
\label{Lfl}
0 = \mathcal{L}_{p^s}^0
\subset \mathcal{L}_{p^s}^1\subset \dots\subset \mathcal{L}_{p^s}^{s-1} 
\subset \mathcal{L}_{p^s}^s = \mathcal{L}_{p^s}\,,
\eean

\noindent
where
$\mathcal{L}_{p^s}^t\,=\,\big\{ \pi_{s}\big(\sum_{r=1}^t c_{r}(z) \,p^{s-r} \ell^{[p^r-1]}_{p^r}(z) \big)
\ |\ c_{r}(z)\in\Z[z]_{p^{r}}\big\},$
$ t=1,\dots,s$.\

\vsk.2>

Define the filtration
\bean
\label{Lfk}
0 = \mathcal{L}_{k,p^s}^0
\subset \mathcal{L}_{k,p^s}^1\subset \dots\subset \mathcal{L}_{k, p^s}^{s-1} 
\subset \mathcal{L}_{k, p^s}^s = \mathcal{L}_{k,p^s}\,,
\eean

\noindent
where
$\mathcal{L}_{k,p^s}^t\,=\,\big\{ \pi_{s}\big(\sum_{r=1}^t c_{r}(z) \,p^{s-r} \ell^{p^r-1]}_{k,p^r}(z) \big)
\ |\ c_{r}(z)\in\Z[z]_{p^{r}}\big\},$
$ t=1,\dots,s$.\

\vsk.2>
It is easy to see that filtrations \eqref{Lfl} and \eqref{Lfk} coincide,  
$\mathcal{L}_{p^s}^t=\mathcal{L}_{k,p^s}^t$ for any $k,t$.

\subsubsection{}
\label{sec A106}

Let $u=(u_1,u_2,u_3)$ be the coordinates in the chart corresponding to $(1,2,3)\in S_3$, see \eqref{id co}.
Consider the functions $I^{[p^s-1]}_{3,p^s}(z)\in \mc M_{p^s}$, \  
$\ell^{[p^s-1]}_{3,p^s}(z)\in \mc L_{p^s}$.   Denote
\bean
\label{Iell}
\hat I^{[p^s-1]}(u):= I^{[p^s-1]}_{3,p^s}(z(u)), 
\qquad
\hat \ell^{[p^s-1]}(u) :=\ell^{[p^s-1]}_{3,p^s}(z(u)).
\eean
We have 
\bean
\label{gc}
&\phantom{aaa}
(-1)^{\frac{p^s-3}2} \hat I^{[p^s-1]} =\,
u_1^{\frac{p^s-3}2} 
 \sum_{a=0}^{\frac{p^s-1}2}\Big(\binom{\frac{p^s-3}2}{a}\binom{\frac{p^s-1}2}{a},
\binom{\frac{p^s-1}2}{a+1}\binom{\frac{p^s-3}2}{a},
\binom{\frac{p^s-1}2}{a+1}\binom{\frac{p^s-1}2}{a}\Big) \,u_2^a,
\\
\label{Lgc}
&
(-1)^{\frac{p^s-1}2} \hat \ell^{[p^s-1]}
=
u_1^{\frac{p^s-1}2}
 \sum_{a=0}^{\frac{p^s-1}2}\binom{\frac{p^s-1}2}{a}^2 u_2^a\,,
\eean
see formula \eqref{gc} and \eqref{g=1 sc}. 

\vsk.2>
Notice that $ \sum_{a=0}^{\frac{p^s-1}2}\binom{\frac{p^s-1}2}{a}^2 u_2^a$ is a solution of eqution
\eqref{HEa} modulo $p^s$.

\vsk.2>

As $s\to\infty$ the sequence $\big((-1)^{\frac{p^s-3}2} \hat I^{[p^s-1]}\big)_{s=1}^\infty$ of vector-valued
polynomials uniformly converges  to the series
$I$ in \eqref{hIu}  near the points where $u_2=0$, see Theorem \ref{thm 10.3}.
Similarly as $s\to\infty$ the sequence $\big((-1)^{\frac{p^s-3}2} \hat \ell^{[p^s-1]}\big)_{s=1}^\infty$ 
of scalar polynomials  uniformly 
converges  to the series $\ell$ in \eqref{Kz} near the points where $u_2=0$.

\begin{cor}
\label{cor appr}
As $s\to\infty$ the sequence $\big(- \hat I^{[p^s-1]}\big/\hat \ell^{[p^s-1]}\big)_{s=1}^\infty$
of vector-valued rational functions
uniformly converges  to the function
$\mc I$ in \eqref{K/K} near the points where $u_2=0$.

\end{cor}

\subsubsection{}

Let $\si=(i,j,k)\in S_3$.
Similarly to Section \ref{sec A106} consider the coordinates $u^\si$
 and the functions
 $I^{[p^s-1]}_{k,p^s}(z)\in \mc M_{p^s}$, \  
$\ell^{[p^s-1]}_{k,p^s}(z)\in \mc L_{p^s}$.   Denote
\bean
\label{Iell}
\hat I^{[\si, p^s-1]}(u^\si):= I^{[p^s-1]}_{k,p^s}(z(u^\si)), 
\qquad
\hat \ell^{[\si, p^s-1]}(u^\si) :=I^{[p^s-1]}_{k,p^s}(z(u^\si)).
\eean
Similarly to Section \ref{sec A106} we obtain the following corollary.

\begin{cor}
\label{cor app si}
As $s\to\infty$ the sequence $\big(- \hat I^{[\si, p^s-1]}\big/\hat \ell^{[\si,p^s-1]}\big)_{s=1}^\infty$
of vector-valued rational functions
uniformly converges  to the function
$\mc I^\si$ in \eqref{K/K} near the points where $u_2^\si=0$.

\end{cor}

\subsubsection{}

This Appendix \ref{appendix} is devoted to the relation between the analytic element 
\linebreak
$(\mc I^{ijk})_{(i,j,k)\in S_3}$ and Dwork's theory in \cite{Dw}.

\vsk.2>

As additional information, for any $\si=(i,j,k)\in S_3$,
 Corollary \ref{cor app si} indicates the
sequence of polynomials 
$I^{[p^s-1]}_{k,p^s}(z)\in \mc M_{p^s}$, \  
$\ell^{[p^s-1]}_{k,p^s}(z)\in \mc L_{p^s}$ whose ratio $p$-adically
tends to the function $\mc I^{ijk}$ near the points $u_2^\si=0$
 where the function 
$\mc I^{ijk}$ is initially defined.

\subsection{Further directions}
\label{sec A12}

In Sections \ref{sec DE} - \ref{sec10}  we considered 
system \eqref{KZ} of KZ equations with parameter $n=2g+1$ and constructed 
polynomial solutions of system \eqref{KZ} modulo $p^s$. We defined the
module $\mc M_{p^s}$ of the constructed solutions and studied the limit
of $\mc M_{p^s}$ as $s\to\infty$. Namely we considered a special coordinate system
$u=u(z)$ in \eqref{uz} associated with one of the asymptotic zones of the KZ equations
and showed that in this coordinate system the limit of
$\mc M_{p^s}$ as $s\to\infty$ produces a $g$-dimensional space of solutions of system
\eqref{KZ}  over $p$-adic numbers $\Q_p$ in the neighborhood of the point $u=0$.

\vsk.2>
Constructions   in this appendix  for $g=1$
and Dwork's theory in \cite{Dw} suggest  the following  project.
Consider all asymptotic zones of system \eqref{KZ},   see their definition for example  in \cite{V2}.
The asymptotic zones are labeled by suitable trees $T$. These trees are analogs of the elements $\si\in S_3$ in the appendix.
 Each asymptotic zone has a distinguished system of coordinates
$u^T$. Probably, for every asymptotic zone the limit of 
$\mc M_{p^s}$ as $s\to\infty$ produces a $g$-dimensional  space $V_T$ of solutions of system
\eqref{KZ} considered over $\Q_p$ in a neighborhood of the point $u^T=0$.
Probably the spaces $V_T$ of $p$-adic solutions, defined at different places $u^T=0$, analytically continue
into a single global invariant $g$-dimensional vector subbundle of the associated KZ connection 
on the trivial vector  bundle of rank $2g$. 
Following Dwork and Theorem \ref{thm bound} we may expect that this subbundle is spanned
at any point of the base by the germs of all solutions of the KZ equations bounded in their polydiscs of convergence.
This subbundle 
would give a generalization of the line subbundle generated by the analytic
element $(\mc I^{ijk})_{(i,j,k)\in S_3}$ constructed in this appendix.  Probably, that 
$g$-dimensional subbundle 
will determine
the set of unit roots of the curves with equation $y^2= \prod_{i=1}^n(x-z_i)$ over the field $\F_p$
similarly to how it is done in Sections \ref{sec A9} and \ref{sec A10} for the elliptic curves.

\bigskip

\end{document}